\documentclass[11pt,a4paper]{article}
\usepackage[T1]{fontenc}
\usepackage[utf8]{inputenc}
\usepackage{lmodern}
\usepackage{mathtools,amssymb,amsmath,amsfonts,mathrsfs,textcomp,amsthm,latexsym}
\usepackage{tikz,enumitem,geometry,cite,bm,comment}
\usepackage{microtype,booktabs,xcolor,needspace}
\usepackage[colorlinks=true,linkcolor=blue!45!black,citecolor=blue!45!black,urlcolor=blue!45!black]{hyperref}
\hypersetup{pdftitle={Graph and Hypergraph Counterexamples to Mubayi's supersaturation Conjecture Beyond n to the power k-1}}
\numberwithin{equation}{section}
\newtheorem{theorem}{Theorem}[section]
\newtheorem{thm}[theorem]{Theorem}
\newtheorem{lemma}[theorem]{Lemma}
\newtheorem{lem}[theorem]{Lemma}
\newtheorem{proposition}[theorem]{Proposition}
\newtheorem{corollary}[theorem]{Corollary}
\newtheorem{conj}[theorem]{Conjecture}
\newtheorem{defi}[theorem]{Definition}
\newtheorem{claim}{Claim}
\theoremstyle{definition}

\DeclareMathOperator{\ex}{ex}
\DeclareMathOperator{\Aut}{Aut}
\DeclareMathOperator{\rank}{rk}
\newcommand{\N}{\mathcal{N}}

\newcommand{\join}{\vee}
\newcommand{\F}{\mathcal F}
\newcommand{\C}{\mathcal C}

\newcommand{\T}{\mathcal T}
\newcommand{\J}{\mathcal J}

\newcommand{\Ja}{\J_a}
\newcommand{\sq}{\sqcup}
\allowdisplaybreaks
\title{Graph and Hypergraph Counterexamples to Mubayi's supersaturation Conjecture Beyond $n^{k-1}$%
	 \thanks{ E-mail: 1519036064@qq.com (L. Zhang), qliu@fzu.edu.cn (Q. Liu), tqf9500@126.com (Q. Tang), yhong@fzu.edu.cn (Y. Hong)
}	\footnote{Mathematics Subject Classification [2020] 05C35, 05C65.}}
\author{Liping Zhang$^{a}$, Qinghai Liu$^{a,b}$\footnote{Corresponding author.},  Qinfei Tang$^{c}$, Yanmei Hong$^{b,d}$, 
\\[1mm]
{\small \it
	$^a$Center for Discrete Mathematics, Fuzhou University, Fuzhou, Fujian, 350108 China}\\
{\small \it
	$^b$Key Laboratory for Operations Research and Cybernetics of Fujian Universities}\\
{\small \it
	$^c$Department of Mathematics Research, Fujian Institute of Education, Fuzhou, China}\\
{\small \it
	$^d$School of Mathematics and Statistics, Fuzhou University, Fuzhou, Fujian, 350108 China}
}	\date{}
\begin{document}
\maketitle
\begin{abstract}
For a fixed $k$-graph $F$, let $h_F(n,q)$ be the minimum number of copies
of $F$ in an $n$-vertex $k$-graph with $\ex(n,F)+q$ edges.
Let $c(n,F)$ be the minimum number created by adding one edge to an
extremal $F$-free $k$-graph.
We construct stable counterexamples to Mubayi's supersaturation conjecture
for every $k\ge2$. The ratio $h_F(n,q)/(q c(n,F))$ tends to zero over
ranges extending beyond $n^{k-1}$.
For graphs, we give examples of every chromatic number at least three.
In particular, there is a graph $F$ for which this ratio tends to zero
for $1\le q=o(n^{4/3})$.
For $k\ge3$, we use connected linear expansions of
$G=(M_a\sq K_{1,b}\sq I_h)\vee K_{m_1,\ldots,m_{k-1}}$.
Under explicit parameter conditions, the ratio tends to zero for
$1\le q=o(n^{k-1+(b-1)/(a+b-1)})$ when $b\ge k+1$.
We prove the exact Tur\'an theorems for both constructions and determine
their unique extremal graphs and hypergraphs.

\medskip\noindent\emph{Keywords:} supersaturation; extremal graph;
linear hypergraph; graph expansion; stability.
\end{abstract}

\section{Introduction}
A hypergraph is a $k$-graph if every edge has size $k$.
We refer to $2$-graphs as graphs here. All graphs and hypergraphs in this paper are simple.
For a fixed $k$-graph $F$, the Tur\'an number $\ex(n,F)$ is the largest
number of edges in an $n$-vertex $F$-free $k$-graph.
A graph or hypergraph attaining this maximum is called extremal.
Write $\operatorname{EX}(n,F)$ for the family of extremal $F$-free $k$-graphs.
These questions originate from Tur\'an's theorem~\cite{g:Turan}.
It states that the unique extremal graph for $K_{d+1}$ is the balanced
complete $d$-partite graph $T_d(n)$.

Adding an edge to an extremal $F$-free graph or hypergraph creates a copy of $F$.
Supersaturation asks how many copies must occur when more edges are added.
We use both standard counting conventions.
For graphs, $\mathcal N_F(H)$ counts unlabelled copies.
It is the number of edge-preserving injections from $V(F)$ to $V(H)$,
divided by $|\Aut(F)|$.
For $k\ge3$, $N_F(H)$ counts the edge-preserving injections themselves.
The quantities $h_F(n,q)$, $t_F(n,q)$ and $c(n,F)$ use unlabelled copies
when $k=2$ and labelled copies when $k\ge3$.
Changing the convention multiplies all three quantities by the same
constant. In particular, it does not change $h_F(n,q)/(q c(n,F))$.

Let $h_F(n,q)$ be the minimum number of copies of $F$ among all
$n$-vertex $k$-graphs with $\ex(n,F)+q$ edges.
Let $t_F(n,q)$ be the corresponding minimum among the $k$-graphs obtained
by adding $q$ edges to a member of $\operatorname{EX}(n,F)$.
Set $c(n,F)=t_F(n,1)$.
Thus $h_F(n,q)\le t_F(n,q)$ and $q c(n,F)\le t_F(n,q)$.
Indeed, each added edge creates at least $c(n,F)$ copies containing no
other added edge. These collections of copies are disjoint.

Rademacher showed that, for large $n$, every graph with $n^2/4+1$ edges
contains at least $\lfloor n/2\rfloor$ triangles.
Erd\H{o}s~\cite{g:Erdos1,g:Erdos2} extended this result to a linear
range of additional edges.
Lov\'asz and Simonovits~\cite{g:LovaszSimonovits1975,g:LovaszSimonovits1976}
determined sharp ranges for triangles and more general cliques.
A graph $F$ is \emph{color-critical} if deleting some edge lowers its
chromatic number. Simonovits~\cite{g:Simonovits3} proved that, if
$\chi(F)=d+1$, then $T_d(n)$ is its unique extremal graph for all large $n$.
Mubayi~\cite{g:Mubayi1} proved that each color-critical graph $F$ has a
constant $\delta>0$ such that $h_F(n,q)\ge q c(n,F)$ for
$1\le q\le\delta n$ and all sufficiently large $n$.

A $k$-graph is \emph{$k$-partite} if its vertices can be partitioned into
$k$ sets so that every edge meets each set once.
We call $F$ \emph{stable} if its extremal $k$-graph $J_n$ is unique up to
isomorphism for all large $n$, and every $F$-free sequence $H_n$ with
$e(H_n)=\ex(n,F)-o(n^k)$ differs from $J_n$ in $o(n^k)$ edges after relabeling.
Mubayi~\cite{h:Mubayi2013} proposed the following conjecture.
\begin{conj}[Mubayi, 2013]\label{g:conj1}
Let $F$ be a non-$k$-partite stable $k$-graph with $k\ge2$.
For sufficiently large $n$ and any integer $q\ge1$,
$h_F(n,q)\ge q c(n,F)$.
\end{conj}
Ma and Yuan~\cite{h:MaYuan2025} constructed stable graph counterexamples
of every chromatic number at least four for $4\le q\le\delta_F n$.
Chen and Yuan~\cite{g:ChenYuan2026} constructed non-stable graphs for
which $h_F(n,1)=c(n,F)$ fails for all sufficiently large $n$.
Li, Liu, Liu and Wang~\cite{h:LLLW2026} gave stable counterexamples in
every uniformity. For every $k\ge2$ and $K>1$, they constructed a stable
non-$k$-partite $k$-graph $F$ such that
$h_F(n,q)\le K^{-1}q c(n,F)$ for $1\le q\le\delta_F n$.

We give graph and hypergraph counterexamples with an asymptotically
vanishing ratio $h_F(n,q)/(q c(n,F))$.
The graph construction gives examples of every chromatic number at least three.
The hypergraph construction gives connected linear examples for every $k\ge3$.
In both cases, the range of $q$ extends beyond $n^{k-1}$.
The proofs use exact extremal results and explicit competing constructions.

\noindent\textbf{Graph counterexamples.}
The first result concerns ordinary graphs.
\begin{thm}\label{g:res-2}
		For every integer $d\ge2$ and every integer $a> 3d/2-1$. There exists a  stable graph $F$ with
		$\chi(F)=d+1$ such that for all sufficiently
		large $n$ and all $q\ge 1$, 
		\[\frac{h_F(n,q)}{c(n,F)}
		=
		O_F\!\left(
		\frac{q^{a+4}}{n^{a+5}}
		\right),\quad
		\frac{h_F(n,q)}{qc(n,F)}
		=
		O_F\!\left(
		\frac{q^{a+3}}{n^{a+5}}
		\right).
		\]
	\end{thm}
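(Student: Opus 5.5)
The plan is to take $F$ of the same shape as the hypergraph construction in the abstract, specialised to $k=2$:
\[
F=(M_a\sq K_{1,b}\sq I_h)\join K_{m_1,\ldots,m_{d-1}},
\]
where $a>3d/2-1$, and $b$, $h$ and $m_1\le\cdots\le m_{d-1}$ are constants chosen large relative to $a$ and $d$. Every copy of $F$ then has to place the graph $M_a\sq K_{1,b}\sq I_h$ inside one class of a near-$d$-partite host, so the copy count is governed by how many copies of $M_a\sq K_{1,b}$ sit inside a single class.

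\textbf{Step 1: chromatic number and stability.} Since $M_a$ has an edge, $\chi(F)=d+1$. Deleting one edge of $M_a$ does not lower $\chi$ (the star and the other matching edges remain), so $F$ is not colour-critical and Mubayi's theorem~\cite{g:Mubayi1} does not apply. By the Erd\H{o}s--Stone--Simonovits stability theorem, every $F$-free graph with $\ex(n,F)-o(n^2)$ edges is $o(n^2)$-close to $T_d(n)$. Once Step 2 shows the extremal graph is unique, this gives stability in the sense of the paper.

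\textbf{Step 2: the exact Tur\'an theorem.} The decomposition family of $F$ consists of graphs with $M_a$ and $K_{1,b}$ together with bounded extra structure. I would prove that for large $n$ the unique extremal graph is $T_d(n)$ (with class sizes possibly shifted by $O(1)$) with a fixed bounded graph $D$ placed in one class. Here $D$ is chosen to maximise edges subject to creating no copy of $M_a\sq K_{1,b}$ together with the remaining structure. The natural candidate is $D=K_{2a-1}$, or a small near-complete graph, with all degrees below $b$.

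The proof would follow Simonovits' progressive induction:
\begin{itemize}
\item use stability to get a max-cut partition;
\item remove the $O(1)$ vertices of linear degree inside their own class;
\item show every class induces a graph with no $M_a$-plus-star configuration;
\item compare edge counts to force all the extra edges into one class with the optimal shape.
\end{itemize}
The hypothesis $a>3d/2-1$ should enter exactly at the comparison step. It must rule out spreading matchings over several classes, i.e.\ $(d$ classes$)\times(a-1)$ matching edges, as well as unbalancing the classes. This step is where I expect the main obstacle to lie: the uniqueness argument needs a careful weight count of the leftover edges against the $\Theta(n)$ loss from unbalancing.

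\textbf{Step 3: lower bound on $c(n,F)$.} Adding any edge $e$ to the extremal graph $J_n$ creates copies of $F$ in which $e$ is used as one matching edge. The other $a-1$ matching edges and the star come from $D$, or from $O(1)$ vertices of the class adjacent to $D$, while $I_h$ and the $(d-1)$-partite part are chosen freely from linear-size classes. Checking the cheapest placement of $e$ (inside the class containing $D$, or inside another class) gives
\[
c(n,F)=\Theta\bigl(n^{v(F)-s}\bigr),
\]
where $s$ is the number of vertices of $F$ that are forced into bounded sets.

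\textbf{Step 4: the competing construction.} To bound $h_F(n,q)$ I would build a graph with $\ex(n,F)+q$ edges that is not of the form ``$J_n$ plus $q$ edges''. Delete $D$, or rebalance the classes, and add about $q+e(D)$ edges inside one class as vertex-disjoint stars, each of size below $b$, spread over about $q$ vertices. This configuration contains many copies of $M_a$ but no $K_{1,b}$, so every copy of $F$ must take its star from a bounded set of vertices. The count is then $O\bigl(q^{a+4}n^{v(F)-s-(a+5)}\bigr)$: the $a$ matching edges and a few further forced vertices range over $O(q)$ choices, in place of $\Theta(n)$ free vertices.

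Dividing by Step 3 gives $h_F(n,q)/c(n,F)=O(q^{a+4}/n^{a+5})$, and dividing again by $q$ gives the second bound. The constants $b,h,m_i$ are tuned so that this exponent bookkeeping matches $a+4$ and $a+5$ exactly. For $q$ beyond the range where the ratio tends to zero, the bound is implied trivially by $h_F\le t_F$ and the same count.
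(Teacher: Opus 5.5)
\textbf{Your Step~2 is false, and Steps~3 and~4 depend on it.} With $m_i>a$, the graph $M_a\sqcup K_{1,b}\sqcup I_h$ has vertex cover number $a+1$. So deleting at most $a$ vertices of $F$ never lowers $\chi(F)$, and $K_a\vee T_d(n-a)$ is $F$-free. This gives $\ex(n,F)\ge t_d(n)+\frac{a}{d}n-O(1)$. Hence no graph of the form ``$T_d(n)$ with $O(1)$-shifted classes plus a bounded $D$ in one class'' can be extremal, since such a graph has only $t_d(n)+O(1)$ edges.

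Taking $b$ large makes things worse. $T_d(n)$ plus a $(b-1)$-regular graph of girth at least $5$ inside one class is also $F$-free: any copy of $F$ would have to put a $K_{1,b}$ or a $C_4$ inside that class. This construction gains about $\frac{b-1}{2d}n$ edges, which exceeds $\frac{a}{d}n$ once $b>2a+1$. In that regime the extremal problem is governed by bounded-degree graphs inside the classes. Nothing in your plan points to a unique extremal graph there, yet the paper's notion of stability requires one. The hypothesis $a>3d/2-1$ also has no identifiable role in your argument.

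The paper instead takes $F=(M_a\cup B)\vee K(m_1,\ldots,m_{d-1})$, where $B$ is the fork and $m_i\ge a+2$. Since $M_a\cup B$ has vertex cover number $a+2$, the extremal graph is $K_{a+1}\vee T_d(n-a-1)$ (Theorem~\ref{g:thm1}). The hypothesis is then exactly $(a+1)/d>3/2$. The $a+1$ universal vertices gain $\frac{a+1}{d}n$ edges, whereas fork-free graphs placed in all $d$ classes gain at most $\frac32 n$ by Lemma~\ref{g:lem-fork-bound}.

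\textbf{Step~4 fails because of the same $\Theta(n)$ surplus.} A competitor built on a $d$-partite host needs at least $\frac{a}{d}n+q-O(1)$ edges inside the classes. Vertex-disjoint stars inside one class of order about $n/d$ span fewer than $n/d$ edges, so your construction does not even reach $\ex(n,F)+1$ edges. More generally, any $K_{1,b}$-free addition to the classes spans only $O(bn)$ edges. So avoiding $K_{1,b}$ is impossible for the superlinear $q$ where the theorem has content. The exponents $a+4$ and $a+5$ also cannot be produced by tuning $b,h,m_i$.

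In the paper, $c(n,F)=\Theta(n^{a+2+S})$ with $S=\sum_j m_j$, because the $a+1$ universal vertices and both ends of the added edge are forced. The competitor replaces $H[R\cup V_1]$ by vertex-disjoint cliques of order $s=\Theta(1+q/n)$ and keeps all other edges. It thus redistributes exactly the $\Theta(n)$ edges at the universal vertices together with the $q$ new ones. Each of the $a+1$ components of $M_a\cup B$ must then lie in a single clique, which gives $O(n^{a+1}s^{a+4})$ placements. Here $a+4$ is the forest rank of $M_a\cup B$, and copies with part of $M_a\cup B$ outside $R\cup V_1$ are shown to be negligible. With $K_{1,b}$ in place of the fork, the analogous exponent would be $a+b$.

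Note also that the paper's computation yields $O(n^{-1}+q^{a+4}n^{-a-5})$. This matches the displayed bound only for $q\gtrsim n$.
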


Taking $d=2$ and $a=3$ gives a stable graph $F$ with $\chi(F)=3$.
There is a constant $\varepsilon>0$ such that
$h_F(n,q)<c(n,F)$ for $1\le q\le\varepsilon n^{8/7}$, and
$h_F(n,q)<q c(n,F)$ for $1\le q\le\varepsilon n^{4/3}$.

\begin{corollary}
		For every integer $d\ge2$ and every integer $a>3d/2-1$. There exist a  stable graph $F$ with
		$\chi(F)=d+1$ such that   the following holds. For all sufficiently
		large $n$,
		
		(i) if $ q=o (n^{a+5\over a+4})$, then $h_F(n,q)=o( c(n,F))$, and 
		
		(ii)  if $q=o (n^{a+5\over a+3 })$, then $h_F(n,q)=o (q\cdot c(n,F))$.	
	\end{corollary}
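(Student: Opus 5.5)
The corollary follows directly from Theorem~\ref{g:res-2}. I take the same graph $F$ that the theorem supplies for the given $d$ and $a$. It is stable, has $\chi(F)=d+1$, and for all sufficiently large $n$ and all $q\ge1$ it satisfies
\[
\frac{h_F(n,q)}{c(n,F)}\le C_F\frac{q^{a+4}}{n^{a+5}},\qquad
\frac{h_F(n,q)}{q\,c(n,F)}\le C_F\frac{q^{a+3}}{n^{a+5}}
\]
for a constant $C_F$ depending only on $F$. I interpret ``$q=o(g(n))$'' as a sequence $q=q(n)\ge1$ with $q(n)/g(n)\to0$. The claim is then that the corresponding ratio tends to zero along that sequence.

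\textbf{Part (i).} Suppose $q=o\bigl(n^{(a+5)/(a+4)}\bigr)$. Then $q^{a+4}=o\bigl(n^{a+5}\bigr)$, because $t\mapsto t^{a+4}$ is monotone and $(\varepsilon n^{(a+5)/(a+4)})^{a+4}=\varepsilon^{a+4}n^{a+5}$. Substituting this into the first bound gives $h_F(n,q)/c(n,F)\le C_F\,q^{a+4}/n^{a+5}\to0$, which is $h_F(n,q)=o(c(n,F))$.

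\textbf{Part (ii).} The argument is the same with exponent $a+3$. If $q=o\bigl(n^{(a+5)/(a+3)}\bigr)$, then $q^{a+3}=o(n^{a+5})$. The second bound then gives $h_F(n,q)/(q\,c(n,F))\to0$.

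The only subtlety is uniformity. The $O_F$ constant and the threshold ``sufficiently large $n$'' in Theorem~\ref{g:res-2} must not depend on $q$, and the theorem states exactly this, since the bound holds for all $q\ge1$. So there is no real obstacle: all the substance lies in Theorem~\ref{g:res-2} itself. Its proof would rest on an exact Tur\'an theorem for $F$ with a unique extremal graph, a lower bound on $c(n,F)$ of order $n^{a+5}$ times a constant, and a competing construction whose number of copies is $O(q^{a+4})$.
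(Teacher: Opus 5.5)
Your deduction is correct and is essentially the paper's route: items (iii) and (iv) of the theorem in Section~\ref{sec:graph-supersaturation} are read off directly from the ratio bounds obtained in step~4 of its proof. One caveat: step~4 actually proves $\mathcal N_F(H')/c(n,F)=O_F\bigl(1/p+q^{a+4}/p^{a+5}\bigr)$ and $\mathcal N_F(H')/(q\,c(n,F))=O_F\bigl(1/(pq)+q^{a+3}/p^{a+5}\bigr)$ with $p=\Theta(n)$, since $c(n,F)=\Theta(n^{a+2+S})$ with $S=\sum_j m_j$ while $\mathcal N_F(H')=O(n^{a+1+S}s^{a+4})$ with $s=\Theta(1+q/n)$; so the introductory statement of Theorem~\ref{g:res-2}, and your heuristic orders $n^{a+5}$ and $q^{a+4}$, drop a $1/n$ term, and you should cite the bounds in this form, which changes nothing since $1/p\to0$ and $1/(pq)\le 1/p$.
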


\noindent\textbf{Hypergraph counterexamples.}
We next consider expansions of graphs. For a graph $G$, its $k$-uniform
expansion $G^{+k}$ has one edge $\{x,y\}\cup Z_{xy}$ for each $xy\in E(G)$.
Each $Z_{xy}$ has size $k-2$. These sets are pairwise disjoint and avoid $V(G)$.
We call $G$ the core graph of this fixed expansion.
Every expansion is linear: distinct edges meet in at most one vertex.

For a positive integer $s$, a graph $Q$ is color-$s$-critical if there
are $s$ edges whose deletion lowers $\chi(Q)$, while deleting any $s-1$
vertices leaves $\chi(Q)$ unchanged. The $s$ edges must form an induced matching.
Simonovits~\cite[Theorem 2.2]{h:Simonovits1974} proved that, if $Q$ is
color-$s$-critical and $\chi(Q)=d+1\ge3$, then its unique extremal graph
for all sufficiently large $n$ is $K_{s-1}\vee T_d(n-s+1)$.
Here $Q_1\vee Q_2$ denotes the join of two vertex-disjoint graphs.

Let $\T^{(k)}(n)$ be the balanced complete $k$-partite $k$-graph, and
write $t^{(k)}(n)=e(\T^{(k)}(n))$.
For a fixed integer $a\ge0$, let $\J_a(n)$ be obtained from
$\T^{(k)}(n-a)$ by adding a disjoint $a$-set $S$ and every $k$-set that meets $S$.
Gerbner~\cite[Theorem 1.4(i)]{h:Gerbner2025} proved that
$\ex(n,Q^{+k})=e(\J_{s-1}(n))$ for all sufficiently large $n$ when
$Q$ is a disjoint union of $s$ connected graphs, each of chromatic number
$k+1$ and containing a color-critical edge.
Our exact theorem gives $\J_a(n)$ for connected cores of a different form.
The family includes color-$(a+1)$-critical graphs when $b=1$.
We also prove stability and determine the order of the single-edge count.

Let $M_a$ denote a matching with $a$ edges, and let $I_h$ denote an independent set of $h$ vertices, allowing $h=0$. Let
	\begin{equation}\label{h:eq:connected-parameters}
	G
	=(M_a\sq K_{1,b}\sq I_{h})\vee K_{m_1,\ldots,m_{k-1}}, \quad a\ge1, b\ge1, h\ge0,   m_i>0\ (1\le i\le k-1).
	\end{equation}
	All these parameters are fixed as $n$ tends to infinity. For disjoint graphs $G_1,G_2$, the symbol $\sq$ always denotes disjoint union.
	For  each integer $k\ge3$, define
	\begin{equation}\label{h:eq:main-constants}
	\begin{gathered}
	\alpha=\frac{k^{k-1}}{(k-1)!}-1,\ 
	\beta=\frac{k}{k-1}\left(\frac{(k-1)^{k-1}}{(k-1)!}-1\right),\
	D=\frac{k^{k-1}}{2(k-2)!}.
	\end{gathered}
	\end{equation}
	These constants depend only on $k$. 
	In particular,
	$
	\alpha-\beta=
	\frac{k\bigl(k^{k-2}-(k-1)^{k-2}\bigr)}{(k-1)!}
	+\frac1{k-1}>0.
	$
	Also $\beta>0$, since $(k-1)^{k-1}>(k-1)!$ for $k\ge3$.

	\begin{theorem}\label{h:thm:connected-exact}
		Let $k,b,h,a,m_1,\ldots,m_{k-1}$ be integers with $k\ge3$, $b\ge1$, and $h\ge0$ satisfying
		\begin{equation}\label{h:eq:admissible}
		a>(b-1)\max\left\{
		\frac{\beta+D}{\alpha-\beta},
		\frac{2D}{\alpha}
		\right\},\qquad
		m_i\ge a+1\quad(1\le i\le k-1).
		\end{equation}
		Then $G^{+k}$ from \eqref{h:eq:connected-parameters} is connected, linear,
		stable, and not $k$-partite. For all sufficiently large $n$, its
		unique extremal hypergraph is $\J_a(n)$. In particular,
		\[
		\ex(n,G^{+k})=\binom nk-\binom{n-a}k+t^{(k)}(n-a),\quad
		c(n,G^{+k})=\Theta_{G^{+k}}(n^{v(G^{+k})-a-k}).\]
	\end{theorem}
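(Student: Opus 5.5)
We first dispose of the structural claims and the lower bound. $G$ is connected because the join with $K_{m_1,\ldots,m_{k-1}}$ connects every vertex of $M_a\sq K_{1,b}\sq I_h$ to the multipartite part. The expansion is linear by construction. To see that $G^{+k}$ is not $k$-partite, we would show that $\chi(G)=k+1$. Indeed, $M_a\sq K_{1,b}$ needs two colours and the join with a complete $(k-1)$-partite graph adds $k-1$ more. A proper $k$-colouring of $G^{+k}$ (each edge rainbow) would induce a proper $k$-colouring of $G$, since the endpoints $x,y$ of any core edge lie in a common hyperedge. That is impossible.

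For $\ex(n,G^{+k})\ge e(\J_a(n))$ we show $G^{+k}\not\subseteq \J_a(n)$. Suppose a copy existed, and let $U$ be the set of core vertices mapped into $S$, so $|U|\le a$. Each hyperedge missing $S$ lies in $\T^{(k)}(n-a)$, so it is rainbow with respect to the $k$ parts. Then $G-U$ would be $k$-colourable, because the pair $\{x,y\}$ of any surviving core edge sits in a rainbow edge. However, deleting at most $a$ vertices from $G$ cannot lower $\chi(G)$ below $k+1$. To check this, note that $m_i\ge a+1$ keeps every class of the multipartite part nonempty. The graph $M_a\sq K_{1,b}$ has $a+1$ vertex-disjoint edges, so removing $a$ vertices still leaves an edge. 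Hence $G-U$ still contains $K_2\vee K_{m'_1,\ldots,m'_{k-1}}$ with all $m'_i\ge1$.

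The upper bound and uniqueness would follow the standard stability-then-cleaning scheme for expansions. First we establish stability: $G^{+k}$ is contained in a blow-up of $K^{(k)}_{k+1}$ expanded, so by the hypergraph removal lemma and the known stability for expansions of graphs of chromatic number $k+1$ (Mubayi/Pikhurko-type), any $G^{+k}$-free $H$ with $e(H)\ge t^{(k)}(n)-o(n^k)$ is $o(n^k)$-close to $\T^{(k)}(n)$.

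Next, for an extremal $H$, we take a max-$k$-cut partition $V_1,\dots,V_k$ and let $L$ be the set of vertices of large degree into "bad" edges. Such vertices have link degree $\ge \epsilon n^{k-1}$ off the partition. We show $|L|\le a$: if there were $a+1$ such vertices, they together with a large complete $k$-partite structure would let us embed $G^{+k}$. The $a+1$ vertices play the role of one endpoint of each matching edge and the centre of the star, with the remaining vertices and the $K_{m_1,\ldots}$ placed greedily in parts, using linearity so that the extension sets $Z_{xy}$ can be chosen disjointly.

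Then we count. Any edge of $H-L$ that is not crossing, together with the neighbourhood structure of the vertices in $L$, gives an embedding unless the number of such edges is small. Comparing $e(H)$ with $e(\J_a(n))$ then forces $|L|=a$, all edges through $L$ present, and $H-L$ complete $k$-partite and balanced.

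The main obstacle is this final exact counting. The star $K_{1,b}$ is where the inequality on $a$ enters. With $|L|=a$, a single extra non-crossing edge inside $H-L$ could combine with an $L$-vertex to form the star only if $b-1$ further gadgets are available. We must bound the loss in edges at $L$ (coefficients $\alpha$, $\beta$, coming from the link of a vertex in $\J_a$ versus in $\T^{(k)}$) against the gain from bad edges, which is of order $D$ times their number. We would try a weighted double count: each bad edge $e$ forces either $b-1$ missing link-edges at some $L$-vertex, or forces $L$ to have fewer than $a$ "full" vertices. A full vertex has link $\approx\binom{n}{k-1}$, compared with $\approx (n/k)^{k-1}k$-type for ordinary vertices, giving the difference $\alpha-\beta$. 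The hypothesis $a(\alpha-\beta)>(b-1)(\beta+D)$, and $a\alpha>2D(b-1)$ in the degenerate case, would be exactly what makes the total nonpositive unless $H=\J_a(n)$.

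Finally, for $c(n,G^{+k})$ we add one edge $e$ inside a part of $\J_a(n)$. Every copy of $G^{+k}$ must use $e$ as a core edge whose deletion, together with the $a$ vertices of $S$ hosting one endpoint of each matching edge, kills $\chi$. Thus $a$ core vertices are pinned to $S$ and $k$ vertices to $e$, and the remaining $v(G^{+k})-a-k$ vertices are free, giving $\Theta(n^{v(G^{+k})-a-k})$.
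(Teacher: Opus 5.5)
There is a genuine gap: the upper bound and uniqueness, which is where the hypothesis \eqref{h:eq:admissible} is actually needed, are not proved. The routine parts of your proposal are fine in outline: connectivity, linearity, non-$k$-partiteness via $\chi(G)=k+1$, the order of $c(n,G^{+k})$, and stability. Citing stability for $K_{k+1}^{+k}$ plus removal is a legitimate alternative to the paper's Ma--Qiu argument on the fat graph. Two repairs are needed.
\begin{itemize}
\item In the $\J_a(n)$-freeness argument, a private vertex may land in $S$, and then its core edge need not be rainbow. Add one endpoint of that core edge to $U$; this still gives $|U|\le a$.
\item In the one-edge count with $b\ge2$, the lower-bound copies must put the star centre in $S$ and map one matching edge onto $e$. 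Putting an endpoint of every matching edge in $S$ cannot be completed.
\end{itemize}

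\textbf{First gap.} Your step ``$|L|\le a$, since $a+1$ vertices of large non-crossing degree let us embed $G^{+k}$'' is false as an embedding statement. Every vertex of $M_a\sq K_{1,b}\sq I_h$ is joined to all classes of $K_{m_1,\ldots,m_{k-1}}$, so such vertices would need linear common fat neighbourhoods in the other parts. Worse, a vertex whose non-crossing edges have two vertices in $V_i$ can typically serve only as a \emph{private} vertex of a core edge inside $V_i$.

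Here is a counterexample for $b\ge2$. Take $\T^{(k)}(n)$ on $V_1,\ldots,V_k$. For $a+b-1$ vertices $w_j\in V_1$, add every $k$-set consisting of $w_j$, two vertices of $V_2$, and one vertex from each of $k-3$ of the parts $V_3,\ldots,V_k$. In a putative copy of $G^{+k}$, colour each core vertex by its part. The only monochromatic core edges lie in $V_2$, and each uses a distinct $w_j$ as a private vertex. So the colour-$2$ set $Z$ satisfies $\chi(G-Z)\le k-1$ and $\rank(G[Z])\le e(G[Z])\le a+b-1<r_k(G)=a+b$ (Proposition~\ref{h:prop:rank-exact}), a contradiction. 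This $G^{+k}$-free hypergraph therefore has $a+b-1>a$ vertices, each lying in $\Theta(n^{k-1})$ non-crossing edges.

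In the paper, embeddings give only two things. They give $|W_i|\le a$ per part when an exceptional $z\in X$ is available as star centre (Lemma~\ref{h:con:lem:Pi-bound}). They also show that tuples with more than $a$ common exceptional neighbours are rare (Lemma~\ref{h:con:lem:common-neighbors}). The conclusions $|X|=a$ and $W_i=X$ come from extremality, via the coefficient bound $k\min\{a,|X|\}-|X|+\frac{Q_k}{k}\sum_i|W_i|\le a\alpha$ (Lemma~\ref{h:con:lem:total-edge-bound}).

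\textbf{Second gap.} The place where \eqref{h:eq:admissible} enters is left as a hoped-for weighted double count, and your heuristic puts it in the wrong regime. In $\J_a(n)+e$ the star centre goes to $S$ and $e$ carries a matching edge, whatever $b$ is. Moreover, once an exceptional vertex exists, every edge of $H-W$ is crossing with no condition on $a$ (Lemma~\ref{h:con:lem:bounded-W}(i)).

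The hypothesis is needed to exclude the competitor with no exceptional vertex at all. There, two sources of extra edges must be bounded.
\begin{itemize}
\item The non-crossing edges inside parts, together with the edges through $W_i$ for parts with $|W_i|\ge a+b$, number at most $(b-1)D(1+|I|/k)(n/k)^{k-1}$. This holds because each ordinary vertex's within-part link has matching number below $b$, and each $x\in X_i$ has high degree to at most $b-1$ vertices of $W_i$.
\item Vertices of $W$ whose links repeat other parts add at most $\beta(1-|I|/k)(a+b-1)(n/k)^{k-1}$, since $|W_i|\le a+b-1$ for $i\notin I$.
\end{itemize}
So $\beta=\frac{k}{k-1}Q_{k-1}$ measures repeated-type links over $k-1$ parts; it does not compare a full vertex with an ordinary one. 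The two inequalities in \eqref{h:eq:admissible} say exactly that $\max\{\beta(a+b-1)+(b-1)D,\,2(b-1)D\}<a\alpha$ (Lemma~\ref{h:con:lem:B-nonempty}).

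Without this analysis, neither $\ex(n,G^{+k})\le e(\J_a(n))$ nor uniqueness is established. You also need a local-uniqueness result such as Lemma~\ref{h:con:lem:local}(ii) to pass from $o(n^{k-1})$ closeness to $H\subseteq\widetilde J$.
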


	When $b=1$, the graph $G$ in \eqref{h:eq:connected-parameters} is
	$
	(M_{a+1}\sqcup I_h)\vee K_{m_1,\ldots,m_{k-1}}.
	$
	In this case, the condition $a>(b-1)\max\left\{
	\frac{\beta+D}{\alpha-\beta},
	\frac{2D}{\alpha}
	\right\}$ is equivalent to $a\ge1$.

	\begin{theorem}\label{h:thm:connected-main}	Let $k,h,a,b,m_1,\ldots,m_{k-1}$ be integers with $k\ge3$ and $h\ge0$ satisfying \eqref{h:eq:admissible}. Suppose that $b\ge k+1$, and let $G$ be as in \eqref{h:eq:connected-parameters}.
		Let $\gamma=(b-1)/(a+b-1)$. There exists a constant $C_{G^{+k}}>0$ such that for all sufficiently large $n$
		and for all integers $1\le q\le n^{k-1+\gamma}$,
		\begin{equation}\label{h:eq:connected-ratio}
		\frac{h_{G^{+k}}(n,q)}{q c(n,G^{+k})}
		\le C_{G^{+k}}\left(
		\frac{n^{k-b}}q+
		\frac{q^{a+b-1}}{n^{(k-1)(a+b-1)+b-1}}\right).
		\end{equation}
		
	\end{theorem}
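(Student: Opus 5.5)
The plan is to bound $h_{G^{+k}}(n,q)$ from above by exhibiting one explicit $n$-vertex $k$-graph $H_{n,q}$ with $\ex(n,G^{+k})+q$ edges and few copies of $G^{+k}$. We then divide by $q\,c(n,G^{+k})=\Theta(q\,n^{v(G^{+k})-a-k})$, which is available from Theorem~\ref{h:thm:connected-exact}. Write $F=G^{+k}$ and $v=v(F)$. Two features of Theorem~\ref{h:thm:connected-exact} guide the construction.

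\emph{Why a single extra edge is costly.} Adding one edge $e$ inside a part of $\T^{(k)}(n-a)\subseteq \J_a(n)$ creates $\Theta(n^{v-a-k})$ copies. The copy uses $e$ as the core edge of one matching edge of $M_{a+1}$ (or of a star edge). The remaining $a$ "bad" core edges are routed through the $a$ vertices of $S$, and all other vertices are free.

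\emph{How the construction beats this.} The star $K_{1,b}$ lets one vertex absorb $b-1$ of the bad edges. So the added edges will be concentrated on few \emph{centers}. A copy of $F$ that uses added edges must then pay either for $b$ added edges at a common center, or for many centers simultaneously.

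Concretely, start from $\J_a(n)$ with parts $V_1,\dots,V_k$ of $\T^{(k)}(n-a)$. Choose $r=\lceil q/(\varepsilon n^{k-1})\rceil$ centers $x_1,\dots,x_r\in V_1$. Add $q$ new $k$-sets, each of the form $\{x_j\}\cup Y$ with $Y\subseteq V_1$, distributed as evenly as possible among the centers. Each center then carries about $q/r\le \varepsilon n^{k-1}$ added edges, which is possible because $\binom{|V_1|}{k-1}=\Theta(n^{k-1})$. The hypothesis $q\le n^{k-1+\gamma}$ keeps $r=O(n^\gamma)$, and this is what controls the second error term.

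The core of the proof is a structural lemma about any copy $\phi$ of $F$ in $H_{n,q}$. Let $A$ be the set of added edges used by $\phi$ and $R=\phi^{-1}(S)$. The lemma asserts: $G$ minus the core edges corresponding to $A$, and minus the vertices in $R$, must be $k$-colourable compatibly with the partition. Since $|R|\le a$ and $m_i\ge a+1$ forbids using $S$ on the $K_{m_1,\dots,m_{k-1}}$ side, the edges in $A$ must destroy all remaining obstructions. An obstruction is one of the $a+1$ matching edges, or the edges of the star, or, if the star centre lies in $R$, nothing further. The consequence is that either

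(i) $A$ contains the $b$ star edges, all at one center $x_j$, which must be the image of the star centre; or

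(ii) $A$ together with $R$ covers the $a+1$ matching edges plus one star edge using at least $a+b-|R|$ further added edges at distinct-or-repeated centers.

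I expect proving this classification rigorously to be the main obstacle. It needs the linearity of the expansion: each added edge meets the rest in at most one vertex, so an added edge inside $V_1$ can only serve as the expansion edge of a core edge with both ends in $V_1$. It also needs a careful check that the $K_{1,b}$ component cannot be embedded more cheaply, for instance by putting its centre in $S$ while its leaves use added edges. I would first treat the case where the whole $M_a\sqcup K_{1,b}\sqcup I_h$ side maps into $V_1\cup S$, and then argue that other placements cost at least as much.

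With the lemma in hand, the counting is routine. In case (i), fix the center, choose $b$ of its added edges ($\le (q/r)^b$ ways), map $a$ matching edges through $S$, and embed the rest freely. Each added edge pins $k-1$ vertices, so the count is $O\big(r\,(q/r)^b\,n^{v-a-1-b(k-1)-?}\big)$. Normalising by $q\,n^{v-a-k}$ and using $q/r\approx n^{k-1}$ when $r>1$ (and $r=1$ otherwise) should give the first term $n^{k-b}/q$. This is where $b\ge k+1$ makes the term small. In case (ii), roughly $a+b$ added edges are used, spread over up to $a+b$ centers; a crude count of $q^{a+b}$ choices, each saving $k-1$ free vertices, yields after normalisation the term $q^{a+b-1}/n^{(k-1)(a+b-1)+b-1}$. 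Finally I would check $e(H_{n,q})=\ex(n,F)+q$ by Theorem~\ref{h:thm:connected-exact}, and absorb all constants into $C_{G^{+k}}$.
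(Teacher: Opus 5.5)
There is a genuine gap. It lies in the construction itself, not only in the classification lemma you flag as the main obstacle.

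Your $H_{n,q}$ consists of $\J_a(n)$ plus $q$ added edges. Take any added edge $e$. Since $\J_a(n)$ is $G^{+k}$-free, every copy of $G^{+k}$ in $\J_a(n)+e$ uses $e$ and no other added edge. By definition there are at least $c(n,G^{+k})$ such copies, and these families are disjoint for different $e$. This is exactly the inequality $q\,c(n,F)\le t_F(n,q)$ noted in the introduction. Hence $N_{G^{+k}}(H_{n,q})\ge q\,c(n,G^{+k})$ for every $q$, so your graph can never give a ratio below $1$. The right-hand side of \eqref{h:eq:connected-ratio} tends to $0$; already at $q=1$ it is $O(n^{k-b})$ with $b\ge k+1$.

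Concretely, your structural lemma is false. Map one matching core edge onto an added edge $\{x_j\}\cup Y$, with core endpoints $x_j$ and some $y\in Y$. Send the star centre and one endpoint of each of the other $a-1$ matching edges to $S$, and embed everything else as in Lemma~\ref{h:con:lem:local}(i). This copy has $|A|=1$ and $|R|=a$, so it fits neither (i) nor (ii). Each added edge yields $\Theta(n^{f-a-k})$ such copies, where $f=v(G^{+k})$, however the added edges are grouped around centres.

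The missing idea is that you must abandon the apex set $S$ altogether. The paper proves this via Theorem~\ref{h:thm:criterion}. It takes a balanced complete $k$-partite graph on $X_1,\dots,X_k$, partitions $X_1$ into cliques of size $z$, and uses the $k$-clique hypergraph $H_z$. This hypergraph has at least $t^{(k)}(n)+d_k(z-1)n^{k-1}$ edges. So $z=O(1+q/n^{k-1})$ recovers both the roughly $a\alpha(n/k)^{k-1}$ edges lost with $S$ and the $q$ extra ones.

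In any copy, the core vertices placed in $X_1$ form a set $U$ with $\chi(G-U)\le k-1$, and each component of $G[U]$ must lie in one block. Hence the copy count is at most $k^{v(G)}n^{f-\rho}z^{\rho}$, where $\rho=\min_U\rank(G[U])=a+b$ by Proposition~\ref{h:prop:rank-exact}. Dividing by $q\,c(n,G^{+k})\ge c_*q\,n^{f-a-k}$ gives $C n^{k-b}q^{-1}(1+q/n^{k-1})^{a+b}$, which expands to \eqref{h:eq:connected-ratio}.

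So the term $n^{k-b}/q$ compares two costs. With no apex vertices, every copy costs only $n^{f-a-b}$. Over $\J_a(n)$, each added edge costs $n^{f-a-k}$. This comparison is where $b\ge k+1$ enters, not $b$-stars of added edges at a common centre.
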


	\Needspace{8\baselineskip}\begin{corollary}\label{h:cor:optimized}
		Let $k,h,a,b,m_1,\ldots,m_{k-1}$ be integers with $k\ge3$ and $h\ge0$ satisfying \eqref{h:eq:admissible}. Suppose that $b\ge k+1$, and let $G$ be as in \eqref{h:eq:connected-parameters}. Let $\gamma=(b-1)/(a+b-1)$. The following results hold for all large $n$.
		\begin{enumerate}
			\item For every $K>1$, there exists $\delta>0$ such that
			$h_{G^{+k}}(n,q)\le K^{-1}q c(n,G^{+k})$ for all
			$1\le q\le\delta n^{k-1+\gamma}$;
			\item  $ h_{G^{+k}}(n,q)=o(q c(n,G^{+k}))$  for all
			$1\le q=o(n^{k-1+\gamma})$;
			\item For every fixed $0<c<\gamma$, there exists $\varepsilon>0$ such that $h_{G^{+k}}(n,q)=O_{G^{+k}}({1\over n^{\varepsilon}} q c(n,G^{+k}))$
			for all $1\le q\le n^{k-1+c}$.
		\end{enumerate}
	\end{corollary}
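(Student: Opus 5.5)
The corollary follows directly from Theorem~\ref{h:thm:connected-main}; the plan is to bound the two terms on the right-hand side of \eqref{h:eq:connected-ratio} separately in each regime of $q$. Write $F=G^{+k}$ and
\[
R(n,q)=\frac{n^{k-b}}{q}+\frac{q^{a+b-1}}{n^{(k-1)(a+b-1)+b-1}}.
\]
The exponent in the second denominator factors as $(a+b-1)(k-1+\gamma)$, because $\gamma(a+b-1)=b-1$. So the second term equals $\bigl(q/n^{k-1+\gamma}\bigr)^{a+b-1}$. The first term is at most $n^{k-b}\le n^{-1}$, since $b\ge k+1$ and $q\ge1$. Hence
\[
h_F(n,q)/(q\,c(n,F))\le C_F\bigl(n^{-1}+(q/n^{k-1+\gamma})^{a+b-1}\bigr)\quad\text{for all } 1\le q\le n^{k-1+\gamma}.
\]
This single inequality is the engine for all three parts.

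\emph{Part (i).} Given $K>1$, choose $\delta\in(0,1]$ with $C_F\delta^{a+b-1}\le 1/(2K)$. Then take $n$ large enough that $C_F n^{-1}\le 1/(2K)$. For $1\le q\le\delta n^{k-1+\gamma}$ we have $q\le n^{k-1+\gamma}$, so Theorem~\ref{h:thm:connected-main} applies. The ratio is then at most $1/(2K)+C_F\delta^{a+b-1}\le K^{-1}$.

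\emph{Part (ii).} If $q=q(n)=o(n^{k-1+\gamma})$, then eventually $q\le n^{k-1+\gamma}$. Both $n^{-1}$ and $(q/n^{k-1+\gamma})^{a+b-1}$ tend to $0$, so the ratio is $o(1)$.

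\emph{Part (iii).} For fixed $0<c<\gamma$ and $1\le q\le n^{k-1+c}$, the second term is at most $n^{-(\gamma-c)(a+b-1)}$. Set $\varepsilon=\min\{1,(\gamma-c)(a+b-1)\}>0$. Then the ratio is at most $2C_F n^{-\varepsilon}$, which gives the claim.

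None of these steps is a real obstacle; the only points needing care are the following.
\begin{itemize}
\item Use the identity $(k-1)(a+b-1)+b-1=(a+b-1)(k-1+\gamma)$ to rewrite the second term.
\item Use $b\ge k+1$ to make the first term $O(n^{-1})$ uniformly in $q$.
\item Check that every $q$ in each range lies within the hypothesis range $q\le n^{k-1+\gamma}$ of Theorem~\ref{h:thm:connected-main}; this holds because $\delta\le1$ and $c<\gamma$.
\end{itemize}
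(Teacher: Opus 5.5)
Your proposal is correct and matches the paper's proof: both read off all three parts from \eqref{h:eq:connected-ratio}, bounding the first term by $n^{k-b}$ (you weaken this to $n^{-1}$) and the second by $\delta^{a+b-1}$ or $n^{-(b-1-c(a+b-1))}$. The only difference is cosmetic: the paper takes $\varepsilon=\min\{b-k,\,b-1-c(a+b-1)\}$ in part (iii), whereas your $\min\{1,(\gamma-c)(a+b-1)\}$ can be smaller but still works, since $(\gamma-c)(a+b-1)=b-1-c(a+b-1)$ and $b-k\ge1$.
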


\noindent\textbf{Notation and proof outline.}
For a graph or hypergraph $H$, write $v(H)=|V(H)|$ and $e(H)=|E(H)|$.
For a graph $Q$, write $\chi(Q)$ for its chromatic number.
Write $[k]=\{1,\ldots,k\}$.
The symbols $K_a$ and $P_a$ denote the complete graph and a path on $a$ vertices.
For a vertex set $U$, write $Q[U]$ for the induced subgraph and $Q-U$
for the graph obtained by deleting $U$.
For a subgraph $Q'\subseteq Q$ and $v\notin V(Q')$, let $d(v,Q')$ be
the number of neighbors of $v$ in $Q'$.
The union $Q_1\cup Q_2$ has vertex set $V(Q_1)\cup V(Q_2)$ and edge set
$E(Q_1)\cup E(Q_2)$. For $Q'\subseteq Q$, the notation $Q-Q'$ means
$Q-V(Q')$.
For a partition into $k$ parts, a $k$-set is crossing if it meets every
part once; otherwise, it is noncrossing.

All parameters other than $n,q$ are fixed. All limits concern $n\to\infty$.
For nonnegative functions $u,v$, the relations $u(n)=O(v(n))$ and
$u(n)=\Omega(v(n))$ denote upper and lower bounds up to positive constant
factors for all sufficiently large $n$.
We write $u(n)=\Theta(v(n))$ when both bounds hold, and $u(n)=o(v(n))$
when $u(n)/v(n)\to0$.
Implicit constants may depend on the fixed forbidden graph and its parameters.
Subscripts indicate this dependence when needed.

Section~\ref{sec:graph-extremal} determines the unique extremal graphs
for the graph construction. It refines a stability partition and then
analyzes the neighborhoods of the exceptional vertices.
Section~\ref{sec:graph-supersaturation} counts copies in a competing
construction and proves the graph supersaturation bounds.
Section~\ref{h:sec:tools} collects the matching, embedding, structure
and stability results needed for hypergraphs.
Section~\ref{h:sec:connected} proves Theorem~\ref{h:thm:connected-exact}.
It establishes the single-edge count and local uniqueness, constructs
a bounded cover, and then proves exact uniqueness.
Section~\ref{h:sec:criterion} gives the hypergraph counting argument,
computes the forest rank, and proves Theorem~\ref{h:thm:connected-main}.
Section~\ref{h:sec:extensions} extends the exact extremal result to
connected subgraphs containing $a+1$ vertex-disjoint copies of $K_{k+1}$.

\section{Extremal graphs}\label{sec:graph-extremal}

	Define  $H(n,k-1,d) = K_{k-1} \vee T_d(n-k+1)$, which every vertex of $K_{k-1}$ is joined to all vertices of $T_d(n-k+1)$. The \emph{fork} graph (also known as the chair graph) is a 5-vertex tree obtained by subdividing one edge of the claw graph $K_{1,3}$. It contains exactly one vertex of degree three, one vertex of degree two and three leaves.

	\begin{thm}\label{g:thm1}
		Let \(a\) and \(d\) be positive integers satisfying \(a>3d/2\), and let $m_1, \dots, m_{d-1}$ be positive integers such that $m_i \ge a+2$ for each $i \in [d-1]$. Define $F = (M_a \cup B) \vee K(m_1, \dots, m_{d-1})$ where $B$ is a fork graph. Then for sufficiently large $n$,  $H(n, a+1, d)$ is the unique extremal graph for $F$.
	\end{thm}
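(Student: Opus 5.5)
We prove Theorem~\ref{g:thm1} with the standard stability-then-cleaning scheme. Throughout $F=(M_a\cup B)\vee K(m_1,\dots,m_{d-1})$, where $B$ is the fork.

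\textbf{Lower bound and chromatic structure.} First we check that $H(n,a+1,d)$ is $F$-free. Deleting the $a$ matching edges together with one edge of $B$ makes $M_a\cup B$ bipartite, so $\chi(F)=d+1$. More importantly, any embedding of $F$ into $K_{a+1}\vee T_d(n-a-1)$ must use at least $a+1$ vertices of the clique part $K$. Indeed, consider the vertices of $F$ placed in $T_d$ and give each the colour of its part. The $d-1$ big classes of $K(m_1,\dots,m_{d-1})$ are complete to $M_a\cup B$, and each has size $m_i\ge a+2$, so each big class must have a vertex in $T_d$. Those vertices lie in distinct parts and occupy at least $d-1$ colours. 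Hence the vertices of $M_a\cup B$ outside $K$ can use at most one further colour, i.e.\ they form an independent set. So $K$ must cover $M_a\cup B$. The vertex cover number of $M_a\cup B$ is $a+2$, since the fork has cover number $2$. The large classes could also be partly placed in $K$, but this only costs more, because we need $d-1$ classes represented and at least one further colour for the cover. Hence $F\not\subseteq H(n,a+1,d)$, and $\ex(n,F)\ge e(H(n,a+1,d))$.

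\textbf{Stability and structure.} Now let $G$ be extremal, so $e(G)\ge e(H(n,a+1,d))$. Since $\chi(F)=d+1$, the Erd\H{o}s--Simonovits stability theorem gives a partition $V_1,\dots,V_d$ with $o(n^2)$ non-crossing edges. We refine it in the usual way. Take a max-cut partition and delete low-degree vertices; this is an induction on $n$, using that $e(H(n,a+1,d))-e(H(n-1,a+1,d))\approx(1-1/d)n+a+1$. We may then assume that every vertex has degree at least $(1-1/d)n+a$, roughly. Let $W$ be the set of vertices having $\Omega(n)$ neighbours in every part; these are the "universal-like" vertices.

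We show $|W|\le a+1$. If $|W|\ge a+2$, we embed $F$ directly: take $a+2$ vertices of $W$ as a vertex cover of $M_a\cup B$. The other endpoints of the matching and fork edges, and the big classes, are found greedily in the common neighbourhoods inside the parts, which are linear in size. We also show that $G-W$ contains no edge inside a part that is disjoint from the others, in the required sense. For this, define a matching-type parameter: the maximum number of vertex-disjoint "bad" structures inside parts (edges inside parts, or vertices of $W$). If $G-W$ has an edge inside $V_i$, then together with the vertices of $W$ this lets us cover $M_a\cup B$ and embed $F$. Here we need exactly the right count: $a+1$ vertices of $W$ plus one interior edge must host $M_a\cup B$.

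\textbf{Main obstacle.} The hard step is the counting showing that inner edges are too costly compared with having $W$ vertices. A fork can be embedded using an inner path $P_3$ or a star inside a part, together with fewer $W$ vertices. So we must control configurations in $G[V_i]$: show that $\nu(G[V_i])$ is small and that the maximum degree inside parts is bounded, since a star $K_{1,3}$ inside a part plus one further edge would produce a fork. Once every $G[V_i]$ has bounded matching number and bounded maximum degree, the number of inner edges is $O(1)$. The deficit $\Omega(n)$ from each missing universal vertex then outweighs them, and the condition $a>3d/2$ enters exactly when comparing these bounded terms. I would handle this by classifying which small graphs (stars, $P_3$, matchings) appear inside the parts, and by computing the edge loss $\Theta(n)$ for each. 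Finally we conclude $|W|=a+1$, that $W$ is complete to everything, and that $G-W$ is complete $d$-partite and balanced, so $G=H(n,a+1,d)$.
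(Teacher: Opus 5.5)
Your lower-bound argument is correct in substance and matches Lemma~\ref{g:lem-H-free}: deleting at most $a+1$ vertices cannot lower $\chi(F)$. The structural part, however, has two genuine gaps, and they sit exactly where the paper does its real work.

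\textbf{First gap: $|W|\le a+1$ does not follow from a greedy embedding.} Suppose $w_1,\dots,w_{a+2}\in W$ are to cover $M_a\cup B$. Then every vertex of the $d-1$ big classes must lie in the \emph{common} neighbourhood of all the $w_j$, and the partners of the $w_j$ in $M_a\cup B$ must lie in common neighbourhoods too. Knowing only that each $w_j$ has $\Omega(n)$ neighbours in every part gives no lower bound on these intersections. A minimum-degree condition does not rescue this. A vertex of degree about $(1-\frac1d)n$ may miss about $n/d^2$ vertices of every part, and $a+2>d$ such vertices can have empty common neighbourhood. (Also, the deletion increment is $(1-\frac1d)n+\frac{a+1}{d}+O(1)$, and $H(n,a+1,d)$ itself has minimum degree about $(1-\frac1d)n+\frac{a+1}{d}$. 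So minimum degree $(1-\frac1d)n+a$ cannot be assumed.) Partially adjacent exceptional vertices cannot be excluded locally; they have to be priced by a global count. The paper does this in Lemma~\ref{g:lem-profile}. It splits each part into profile classes $U_{i,Y}=\{u\in U_i:N(u)\cap W=Y\}$ and shows that every choice of positive-density profiles $Y_1,\dots,Y_d$ has $|\bigcap_i Y_i|\le a+1$. It then bounds the edges at $W$ by $\frac nd\sum_{w\in W}\sum_i p_i(w)$, using $\sum_i p_i(w)\le d-1+\prod_i p_i(w)$ and $\sum_w\prod_i p_i(w)=\mathbb E|\bigcap_i Y_i|$.

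\textbf{Second gap: bounding inner edges by $O(1)$ is circular.} You plan to show that $\nu(G[V_i])$ and the internal degrees are bounded. These bounds hold only once $a+1$ almost universal vertices are present, which is what you are trying to prove. Without them the parts can carry linearly many internal edges. For $d=2$, add vertex-disjoint copies of $K_4$ inside both sides of $K(V_1,V_2)$. This graph is $F$-free: a fork does not fit into a union of $K_4$'s. If instead $M_a\cup B$ meets both sides at $x$ and $x'$, the class of size $m_1\ge 6$ must fill the two $K_4$'s through $x$ and $x'$, leaving room for no further vertex of $M_a\cup B$. Yet the graph has $W=\varnothing$ and $t_2(n)+\frac32 n-O(1)$ edges. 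So the comparison is linear against linear, and this is exactly where $a>3d/2$ enters. If inner edges were $O(1)$, no relation between $a$ and $d$ would be needed. The missing ingredients are:
\begin{itemize}
\item an $(M_a\cup B)$-free graph is either fork-free or independent after deleting $2a+3$ vertices (Lemma~\ref{g:lem-dichotomy});
\item a fork-free graph has at most $\frac32$ edges per vertex, with equality for disjoint $K_4$'s (Lemma~\ref{g:lem-fork-bound});
\item internal edges occur essentially only among vertices with no neighbour in $W$;
\item if these vertices form a fraction $z_i$ of part $i$, the bound on edges at $W$ drops by $(a+1)\bigl(1-\prod_i(1-z_i)\bigr)\frac nd\ge\frac{a+1}{d}\sum_i z_i\frac nd$, while internal edges gain at most $\frac32\sum_i z_i\frac nd$.
\end{itemize}
Since $a+1>3d/2$, this trade is never profitable.
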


	We prove Theorem~\ref{g:thm1} by first
	refining the stability partition to a cleaned structure and then analyzing the
	finitely many neighborhood profiles determined by the bounded exceptional set.
	The final subsection combines these structural statements to determine the extremal graphs. Throughout this section, unless otherwise stated, we fix
	\(F=(M_a\cup B)\vee K(m_1,\ldots,m_{d-1})\), where $B$ is the fork and
	$m_i\ge a+2$ for every $i\in[d-1]$.
	\subsection{Stability and cleaning}

	\begin{lem}\label{g:lem-H-free}
		Let $a, d\ge 2$ be integers, and let $m_1, \dots, m_{d-1}$ be positive integers such that $m_i \ge a+2$ for each $i \in [d-1]$. Define $F = (M_a \cup B) \vee K(m_1, \dots, m_{d-1})$ where $B$ is a fork graph. Then  $H=H(n, a+1, d)$ is $F$-free.
	\end{lem}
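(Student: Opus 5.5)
We argue by contradiction: a copy of $F$ in $H$ would have at most $a+1$ vertices in the clique part of $H$, and we show that this many vertices cannot cover all edges of $M_a\cup B$, which a proper $d$-colouring of the rest of $F$ would require. Write $H=K_{a+1}\vee T_d(n-a-1)$. Let $S$ be the vertex set of the clique $K_{a+1}$, and let $V_1,\ldots,V_d$ be the parts of $T_d(n-a-1)$. Then $H-S$ is $d$-partite.

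Suppose that $\phi$ is an embedding of $F$ into $H$, and put $X=\phi^{-1}(S)\subseteq V(F)$, so that $|X|\le a+1$. Since $\phi$ maps $F-X$ into the $d$-partite graph $H-S$, the graph $F-X$ has a proper $d$-colouring $c$. We use it to show that $X$ must be a vertex cover of $M_a\cup B$.

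Let $A_1,\ldots,A_{d-1}$ be the classes of $K(m_1,\ldots,m_{d-1})$ in $F$. Because $|A_i|=m_i\ge a+2>|X|$, each class $A_i$ contains a vertex $u_i\notin X$. The vertices $u_1,\ldots,u_{d-1}$ lie in distinct classes, so they form a clique $K_{d-1}$ in $F-X$ and receive $d-1$ distinct colours under $c$. Every vertex of $(M_a\cup B)-X$ is joined to every $u_i$ by the join structure of $F$. Hence all such vertices get the one remaining colour, so $(M_a\cup B)-X$ contains no edge. In other words, $X\cap V(M_a\cup B)$ is a vertex cover of $M_a\cup B$.

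It remains to bound the vertex cover number of $M_a\cup B$ from below. The fork $B$ contains two disjoint edges: the edge from the degree-three vertex to one of its leaves, and the pendant edge of the subdivided branch, which joins the degree-two vertex to its leaf. Together with the $a$ edges of $M_a$, this gives a matching of size $a+2$ in $M_a\cup B$. Any vertex cover must contain a distinct endpoint of each matching edge, so every vertex cover of $M_a\cup B$ has at least $a+2$ vertices. This contradicts $|X|\le a+1$, and therefore $H$ is $F$-free.

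The argument is essentially routine. The one point needing care is the pigeonhole step: the hypothesis $m_i\ge a+2$ is exactly what guarantees that each class $A_i$ survives the deletion of $X$, which forces the whole of $(M_a\cup B)-X$ into a single colour class. The assumptions $a,d\ge2$ are not needed beyond making the statement meaningful.
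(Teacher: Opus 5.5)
Your proof is correct and follows the same route as the paper: map a copy of $F$ into $H$, note that the part outside the $(a+1)$-clique lies in a $d$-partite graph, and get a contradiction because deleting at most $a+1$ vertices of $F$ cannot lower its chromatic number to $d$. The paper only asserts that last fact. You actually prove it, by using the surviving vertices $u_i\in A_i$ to force $(M_a\cup B)-X$ to be independent, and then exhibiting a matching of size $a+2$ in $M_a\cup B$.
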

	
	\begin{proof} By construction, $H$ contains a vertex set $R$ with $|R|\leq a+1$ such that $\chi(H-R)\leq d$. Assume, for contradiction, that $H$ contains a copy $F_0$ of $F$. By the structure of $F$, decreasing its chromatic number from $d+1$ to $d$ requires removing at least $a+2$ vertices. Hence, for any set $S$ with $|S|\leq a+1$, we have $\chi(F_0-S)\geq d+1$. In particular, taking $S=V(F_0)\cap R$, we obtain $\chi(F_0-S)\geq d+1$. However, $F_0-S$ is a subgraph of $H-R$, so $\chi(F_0-S)\leq \chi(H-R)\leq d$. This contradiction shows that $H(n,a+1,d)$ is $F$-free. 	
	\end{proof}
	
	\begin{thm}[Erd\H{o}s--Simonovits Stability Theorem \cite{g:Simonovits3,g:Erdos1,g:Erdos2}]\label{g:thm-stable}  Let $F$ be a graph with $\chi(F)=d+1$. For every $\varepsilon > 0$, there exists a constant $\delta(F)> 0$  such that, if $n$ is sufficiently large and $G_n$ is an $F$-free graph with $n$-vertex satisfying that $e(G_n)\geq  (1-(1/d) -\delta){n\choose 2}$. Then $G_n$ can be obtained by omitting at most $\varepsilon n^2$ edges from a  $K(V_1,\cdots,V_d)$, and for every $ p\in[d]$, there is  $$\left|V_p|-\frac nd\right|<\varepsilon n, \quad e(G[V_p])<\varepsilon n^2.$$
	\end{thm}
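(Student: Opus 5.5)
The plan is to reduce to the clique case and then use a stability argument for $K_{d+1}$-free graphs. The statement is the classical Erd\H{o}s--Simonovits theorem, so we only need a self-contained route to it. Let $F$ have $\chi(F)=d+1$ and $t=v(F)$. Then $F\subseteq K_{d+1}(t)$, the complete $(d+1)$-partite graph with parts of size $t$. Hence $G_n$ is $K_{d+1}(t)$-free.

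\textbf{Step 1 (reducing to $K_{d+1}$-free graphs).} We first show that $G_n$ has $o(n^{d+1})$ copies of $K_{d+1}$. Suppose instead it had $\eta n^{d+1}$ copies. The Erd\H{o}s hypergraph K\H{o}v\'ari--S\'os--Tur\'an argument, applied to the $(d+1)$-graph of cliques, would then produce a $K_{d+1}(t)$ for large $n$, which is a contradiction. Next apply the graph removal lemma. It deletes $o(n^2)$ edges and leaves a $K_{d+1}$-free graph $G'_n$ with
\[
e(G'_n)\ge \Bigl(1-\frac1d-2\delta\Bigr)\binom n2 .
\]
It therefore suffices to prove the statement for $K_{d+1}$-free graphs, with a slightly larger loss. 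The $o(n^2)$ deleted edges are absorbed into the $\varepsilon n^2$ error terms.

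\textbf{Step 2 (stability for $K_{d+1}$).} We use F\"uredi's short argument. Pick a vertex $v_1$ of maximum degree and set $A_1=N(v_1)$, $B_1=V\setminus A_1$. The set $A_1$ spans a $K_d$-free graph. Moreover,
\[
\sum_{u\in B_1} d(u)\le |B_1|\,\Delta = |B_1|\,|A_1|.
\]
This gives
\[
e(G)\le e(G[A_1])+|A_1||B_1|-e(G[B_1]).
\]
Iterating inside $A_1$ through $d$ levels, we obtain a $d$-partition $V_1,\dots,V_d$. It satisfies
\[
e(G)+\sum_p e(G[V_p])\le e\bigl(K(V_1,\dots,V_d)\bigr).
\]
Since $e(K(V_1,\dots,V_d))\le t_d(n)$ and $e(G)\ge t_d(n)-O(\delta n^2)$, we get two consequences. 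First, $\sum_p e(G[V_p])=O(\delta n^2)$. Second, $e(K(V_1,\dots,V_d))\ge t_d(n)-O(\delta n^2)$. From the first, the number of edges of $K(V_1,\dots,V_d)$ missing from $G$ is also $O(\delta n^2)$. From the second, the quadratic form $\sum_{p<q}|V_p||V_q|$ is within $O(\delta n^2)$ of its maximum. Because this form is strictly concave around the balanced point, it forces $\bigl||V_p|-n/d\bigr|=O(\sqrt{\delta})\,n$. Choosing $\delta$ small in terms of $\varepsilon$ yields all three conclusions.

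\textbf{Main obstacle.} The delicate step is Step 1. We need the supersaturation claim that $\eta n^{d+1}$ copies of $K_{d+1}$ force a copy of $K_{d+1}(t)$, and we need the removal lemma, which supplies the constants. I would cite both as standard tools rather than reprove them. Alternatively, one can avoid the removal lemma by running the Erd\H{o}s--Stone argument directly. In that version, a large $K_d(s)$ is found, and each outside vertex is classified by its adjacency pattern into it. Either way, Step 2 is a routine calculation.
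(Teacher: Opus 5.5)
Your proof is correct, but there is no proof in the paper to compare it with. The paper quotes this statement as a classical black box from Erd\H{o}s and Simonovits. It uses it only qualitatively, to get a $d$-partition with $o(n^2)$ internal edges and parts of order $n/d+o(n)$.

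Your argument is essentially F\"uredi's short proof of the theorem:
\begin{itemize}
\item Erd\H{o}s's theorem on complete $(d+1)$-partite $(d+1)$-graphs, applied to the clique hypergraph, shows that $G_n$ has $o(n^{d+1})$ copies of $K_{d+1}$.
\item The removal lemma then reduces to the $K_{d+1}$-free case, at a cost absorbed into $\delta$.
\item The maximum-degree neighbourhood iteration gives $e(G)+\sum_p e(G[V_p])\le e(K(V_1,\dots,V_d))$.
\end{itemize}
From this inequality all three conclusions follow. The internal edges and the missing crossing edges, which number $e(K)-e(G)+\sum_p e(G[V_p])$, are $O(\delta n^2)$. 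The imbalance is $O(\sqrt{\delta})\,n$ per part, via $\sum_p(|V_p|-n/d)^2=\sum_p|V_p|^2-n^2/d$.

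The classical Erd\H{o}s--Simonovits argument is the alternative you mention at the end. It finds a large $K_d(s)$ by Erd\H{o}s--Stone and classifies the remaining vertices by their adjacency pattern to it. Your route is shorter and gives a sharp, linear statement in the clique case. However, the removal lemma makes $\delta$ tower-type in $\varepsilon$ for general $F$, whereas the classical route avoids regularity altogether. Since the paper needs only the qualitative form, either route suffices.

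Your argument proves the statement in its intended sense: $G_n$ has few internal edges and misses few crossing edges. The literal wording, ``obtained by omitting edges from $K(V_1,\dots,V_d)$'', would force $G_n$ to be $d$-partite, and that is false in general. For example, $T_d(n)$ plus one edge contains no two disjoint copies of $K_{d+1}$, yet it is not $d$-partite. You rightly do not try to prove that reading.
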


	Let $G$ be an extremal graph for $F$.
	We shall use the Erd\H{o}s--Simonovits stability Theorem.  Since $\chi(F)=d+1$ and the construction
	in Lemma~\ref{g:lem-H-free} has $t_d(n)+O_F(n)$ edges, it implies that
	graph $G$ admits a $d$-partition with $o(n^2)$
	internal edges and class orders $n/d+o(n)$.  
	Choose a maximum $d$-cut $U_1\cup\cdots\cup U_d$, i.e., a partition
	maximizing 
	$P=\sum_{1\le i<j\le d}|U_i||U_j|$. Then for every $v\in U_i$ and $j\ne i$, we have
	\begin{align}\label{g:LC}
		d_G(v,U_i)\le d_G(v,U_j).                        
	\end{align}
	Otherwise, moving $v$ from $U_i$ to $U_j$ changes the number of actual
	cross-edges by
	$
	d_G(v,U_i)-d_G(v,U_j)>0,
	$
	contradicting the maximizing
	$P$.
	
	For a vertex pair $(x,y)$ with $x, y\in V_p$, if $xy\in E(G)$ then we   call this pair the \emph{internal edge}. 
	For  a vertex pair $(x,y)$ with $x\in V_p$ and $y\in V_q$ for $p\neq q$, if $xy\notin E(G)$ then we   call this pair the \emph{missing edge}. 
	Let 
	\[
	I=\sum_{i\in[d]} e(G[U_i]),\qquad
	M=\sum_{1\le i<j\le d}\bigl(|U_i||U_j|-e(U_i,U_j)\bigr).
	\]
	Therefore, $
	e(G)=P-M+I.
	$ The \(F\)-free construction from Lemma~\ref{g:lem-H-free} gives
	\[
	e(G)=\ex(n,F)\ge e(H(n,a+1,d))
	=t_d(n)+\frac{a+1}{d}n+O_F(1).
	\]
	Therefore
	\begin{align}\label{g:eq-IminusM}
		I-M=e(G)-P=(e(G)-t_d(n))+(t_d(n)-P)\ge\frac{a+1}{d}n+O_F(1).
	\end{align}

	
	For $x\in U_i$, let $\alpha(x)$ be the set of internal edges incident with
	$x$. For $j\neq i$, let $\beta_j(x)$ be the set of missing  edges incident with $x$ and a vertx of $U_j$ and define $\beta(x)=\bigcup _{i\in[d]}\beta_i(x)$.
	For a vertex set $U$, let
	$\alpha(U)=\bigcup_{x\in U}\alpha(x)$ and $\beta_i(U)=\bigcup_{x\in U}\beta(x)$. 
	
	The rest of this section turns stability into an exact statement.   We next refine this approximate
	structure. We first show that the
	number of internal edges and missing  edges is linear in \(n\), and
	then remove a bounded exceptional set to obtain a cleaned \(F\)-free
	graph. These reductions will provide the structural framework needed
	for the subsequent analysis of the local structure of the principal
	classes.

	\begin{claim}\label{g:lem-good-internal}
		Let 
		$0<\varepsilon<1/(3d|V(F)|)$.  For $i\in[d]$, let
		\[
		U_i^{\rm good}=\{x\in U_i:
		|\beta_j(x)|\le\varepsilon n
		\text{ for every }j\ne i\}.
		\]
		Then $G[U_i^{\rm good}]$ is $M_a\cup B$-free, and 
		$
		\sum_{i=1}^d e(G[U_i^{\rm good}])=O_F(n).
		$
	\end{claim}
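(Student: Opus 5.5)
The plan is to prove the two assertions separately: first a local embedding argument that turns an internal copy of $M_a\cup B$ into a copy of $F$, and then a linear Turán-type bound for the forbidden family $M_a\cup B$.

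\emph{Step 1: $G[U_i^{\rm good}]$ is $(M_a\cup B)$-free.} Suppose $G[U_i^{\rm good}]$ contains a copy $W$ of $M_a\cup B$. Then $|V(W)|=2a+5$, and every vertex of $W$ has at most $\varepsilon n$ non-neighbours in each $U_j$ with $j\ne i$. So in each $U_j$, $j\ne i$, the set $U_j'$ of common neighbours of $V(W)$ has size at least $|U_j|-(2a+5)\varepsilon n\ge n/d-o(n)-(2a+5)\varepsilon n$. By the choice $\varepsilon<1/(3d|V(F)|)$, this is at least $n/(2d)$. It remains to find a copy of $K(m_1,\ldots,m_{d-1})$ with its $p$-th part inside one of the $U_j'$, taking a different $j\ne i$ for each $p$. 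Its vertices are then automatically joined to all of $V(W)$, so together with $W$ it forms a copy of $F$, which is the desired contradiction.

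To find the complete $(d-1)$-partite graph, I would use that $G$ misses only $o(n^2)$ cross pairs between the $U_j$. This follows from the stability theorem (Theorem~\ref{g:thm-stable}) together with $P\ge t_d(n)-o(n^2)$, so that $M=o(n^2)$. Hence the $(d-1)$-partite graph between the sets $U_j'$ is almost complete, with density $1-o(1)$. A standard Kővári--Sós--Turán / supersaturation argument for complete multipartite graphs in dense multipartite graphs then gives $K(m_1,\ldots,m_{d-1})$ with the required parts. Alternatively, one can embed greedily: choose vertices one at a time, keeping the common neighbourhoods in the remaining parts of linear size, which is possible because the number of missing edges is $o(n^2)$. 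I expect this step to be the main technical point. The care needed is to check that the $o(n^2)$ missing edges cannot concentrate on the sets $U_j'$; since these sets have linear size, density $1-o(1)$ still holds on them, and the argument goes through.

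\emph{Step 2: the edge bound.} The graph $M_a\cup B$ is a forest with $2a+5$ vertices. By the Erdős--Sós-type trivial bound, any graph with average degree at least $2(2a+5)$ contains every tree on $2a+5$ vertices. For the disjoint union, a graph with more than $c_F m$ edges on $m$ vertices contains $M_a\cup B$. One can see this by deleting vertices of degree less than $2(2a+5)$ to get a subgraph with large minimum degree. Alternatively, one can greedily extract $a$ disjoint edges and a fork: if the graph has no $a+1$ vertex-disjoint copies of the relevant components, then all edges are covered by $O_F(1)$ vertices of bounded structure, which together with the absence of forks forces $O_F(m)$ edges. Since $\mathrm{ex}(m,M_a\cup B)=O_F(m)$ for forests, we get $e(G[U_i^{\rm good}])=O_F(|U_i|)$. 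Summing over $i\in[d]$ gives $\sum_i e(G[U_i^{\rm good}])=O_F(n)$.
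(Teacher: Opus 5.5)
Your proof is correct and is essentially the paper's argument. The paper also obtains the linear bound by passing to a subgraph of minimum degree at least $2a+5$ and embedding the forest $M_a\cup B$ greedily, and it rules out a copy of $M_a\cup B$ inside $U_i^{\rm good}$ by greedily placing the classes of $K(m_1,\ldots,m_{d-1})$ among the common neighbours in the other principal classes.

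Your use of $M=o(n^2)$ to make the multipartite vertices themselves well connected fills in a point that the paper's greedy step leaves implicit. The bound $M=o(n^2)$ follows from $P\le t_d(n)$ together with the maximum cut having at least $t_d(n)-o(n^2)$ cross edges. It does not follow from $P\ge t_d(n)-o(n^2)$, which is the inequality you wrote.
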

	
	\begin{proof}
		Let $f_0=|V(M_a\cup B)|=2a+5$.  Every graph with more than
		$f_0N$ edges has a subgraph of minimum degree at least $f_0$, obtained
		by repeatedly deleting vertices of degree smaller than $f_0$.
		Consequently it contains every forest of order at most $f_0$, embedded
		greedily one leaf at a time.  Hence, if
		$e(G[U_i^{\rm good}])>f_0|U_i|$, then
		$G[U_i^{\rm good}]$ contains $M_a\cup B$.
		
		For this fixed copy, select the color classes of
		$K(m_1,\ldots,m_{d-1})$ successively in the other principal classes.
		At each step fewer than
		$v(F)\varepsilon n+O_F(1)<n/(2d)$ vertices are excluded by missing
		adjacencies, whereas stability gives $|U_j|=n/d+o(n)$.  Thus every
		selection can be made, producing a copy of $F$, a contradiction.
		Therefore $e(G[U_i^{\rm good}])\le f_0|U_i|$ for every $i$, and
		summing proves~\textup{(3)}.
	\end{proof}

	\begin{thm}[Ajtai, Koml{\'o}s, Simonovits  and Szemer\'edi~\cite{g:Ajtai}]\label{g:Theorem 2.9.} 
		Let $T$ be a tree with $l$ edges. If $l$ is  large enough, then  $\ex(n,T)\leq \frac{1}{2}(l-1)n$ for all integer $n$.
	\end{thm}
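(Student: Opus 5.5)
Since Theorem~\ref{g:Theorem 2.9.} is the Ajtai--Koml\'os--Simonovits--Szemer\'edi resolution of the Erd\H{o}s--S\'os conjecture for large trees, the paper will only cite it. If I had to prove it, I would use the following strategy. Fix a tree $T$ with $l$ edges, $l$ large, and a graph $G$ with $e(G)>\frac12(l-1)n$; the goal is to embed $T$ in $G$.

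\textbf{Reductions.} First, repeatedly delete vertices of degree at most $(l-1)/2$. Each deletion removes at most $(l-1)/2$ edges, so the process stops at a nonempty subgraph $G'$ with $\delta(G')>(l-1)/2$ and average degree still exceeding $l-1$. Passing to a component, I may assume $G'$ is connected. Two easy cases are then handled directly:
\begin{itemize}
\item If $\delta(G')\ge l$, embed $T$ greedily leaf by leaf.
\item If $G'$ contains a vertex of degree $\ge l$ whose neighbourhood is suitably rich, embed from that vertex.
\end{itemize}
The real content is the regime where degrees lie between about $l/2$ and $l$, with some vertices of larger degree.

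\textbf{Structure of $T$.} I would use a dichotomy for large trees: either $T$ has at least $\epsilon l$ leaves, or it contains at least $\epsilon l$ vertex-disjoint bare paths of bounded length. In the leaf case, I embed $T$ minus its leaves (of size at most $(1-\epsilon)l$) greedily using the minimum degree $>(l-1)/2$ together with the high-degree vertices, and then attach the leaves via a Hall-type matching argument on the large-degree vertices. In the bare-path case, the paths give flexibility for rerouting.

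\textbf{Sparse versus dense.} The main split is on $n$ relative to $l$.
\begin{itemize}
\item \emph{Sparse case} ($|G'|\gg l$). Local neighbourhoods are nearly disjoint, and an averaging argument finds a vertex whose second neighbourhood has the required structure (a vertex of degree $\ge l$ with many neighbours of degree $\ge l/2$). From such a vertex, $T$ is embedded by a two-level greedy procedure: rooted at a centroid, with subtrees of size $\le l/2$ hung on distinct neighbours.
\item \emph{Dense case} ($|G'|=O(l)$). Apply Szemer\'edi's regularity lemma to $G'$ and form the reduced (cluster) graph $R$. The edge density $>(l-1)/n$ transfers to $R$. I would then find in $R$ a connected structure, namely a connected matching together with a star of high-degree clusters, large enough to host $T$. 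I would cut $T$ into many small subtrees via centroid decomposition and embed these into regular pairs, using the standard blow-up/regularity embedding for bounded-degree pieces. High-degree vertices of $T$ are handled separately by placing them on exceptional high-degree vertices of $G'$.
\end{itemize}

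I expect the dense case to be the main obstacle. It needs an exact threshold $\frac12(l-1)n$ with no $\epsilon$ loss, while regularity only gives approximate density. To close this gap I would first prove an approximate version (average degree $(1+\epsilon)l$ suffices) via the cluster-graph argument. I would then run a stability analysis: if $G'$ barely fails the approximate hypothesis, it must be close to a disjoint union of $K_l$'s or to $K_{l/2,\,n-l/2}$-like extremal configurations. In those near-extremal configurations I would embed $T$ directly using its bipartition classes.
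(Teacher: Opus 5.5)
You are right that the paper does not prove this statement. It only cites the Ajtai--Koml\'os--Simonovits--Szemer\'edi manuscript, which was never published, so there is no argument in the paper to match. Your first reduction, to a connected $G'$ with $\delta(G')>(l-1)/2$ and average degree $>l-1$, is correct. Beyond that your text is a roadmap rather than a proof, and the steps you make concrete fail on the simplest near-extremal host.

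Take $4\mid l$, $s=l/2$ and $G=K_s\vee\overline{K}_N$ with $N>l^2/4$. Then $\delta(G)=s>(l-1)/2$ and $e(G)-\frac12(l-1)n=N/2-l^2/8>0$. So your reduction leaves $G$ intact, and since $|G|\gg l$ it falls in your sparse case. But all $N$ low-degree vertices have the \emph{same} neighbourhood, so the claim that local neighbourhoods are nearly disjoint is false. Moreover, the only vertices of degree $\ge l$ are the $s$ clique vertices, and your recipe puts the centroid of $T$ on such a vertex.

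Now take $T=P_{l+1}=v_0\cdots v_l$. Its unique centroid is $v_{l/2}$, and $T-v_{l/2}$ is two paths on $l/2$ vertices, an even number. Hence every independent set of $T$ avoiding $v_{l/2}$ has at most $l/2$ vertices. The tree vertices mapped outside the clique must be independent in $T$. So any embedding with $v_{l/2}$ in the clique uses at least $l/2+1>s$ clique vertices, and your sparse-case procedure cannot succeed. Yet $P_{l+1}$ does embed in $G$, with the odd-indexed vertices in the clique.

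The same host defeats your leaf case. A minimum degree just above $(l-1)/2$ does not support greedily embedding a tree with $(1-\epsilon)l$ edges. Greedy choices that place adjacent tree vertices in the clique use up the $s\approx l/2$ vertices on which every low-degree vertex depends, and the embedding then gets stuck.

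The missing idea is a way to assign the colour classes of $T$ globally to the structure of the host: a small set of huge-degree vertices versus a mass of vertices of degree about $l/2$. Splitting on $|G'|$ versus $l$ does not provide this, since a large host can be locally dense or, as above, have massively overlapping neighbourhoods. In the published work on the closely related Loebl--Koml\'os--S\'os problem (Hladk\'y, Koml\'os, Piguet, Simonovits, Stein, Szemer\'edi), this is done by decomposing the host into huge-degree vertices, dense spots that can be regularized, and a sparse remainder. Even there only an approximate result is obtained.

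Your dense case has the same problem in two places. It rests on an approximate theorem for trees of unbounded degree, but the regularity/blow-up embedding of bounded-degree pieces does not cover the high-degree vertices of $T$, which you only say are handled separately. It also rests on a stability classification (close to disjoint copies of $K_l$ or to $K_{l/2,n-l/2}$) that you assert rather than prove. Essentially all the difficulty of the exact threshold lies in these two steps.

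None of this is needed for the paper. The theorem is used only in Theorem~\ref{g:lem-high-bounded}(iii), to obtain $\ex(n,M_a\cup B)=O(n)$ for a fixed small forest. The cited statement, which concerns large trees, does not even literally apply to that forest. The bound follows from the elementary argument already in Claim~\ref{g:lem-good-internal}: a graph with more than $f_0n$ edges has a subgraph of minimum degree at least $f_0$, and every forest on $f_0$ vertices embeds into it greedily.
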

	\begin{lem}[Lemma 4\cite{g:Simonovits3}]	\label{g:lem-2.11} Let $m>0$ be a integer, and let $c>0$  be a constant. Then there exist an integer $m'\geq m$ and a constant $c'\leq c$ such that if a set $A$ with $|A|=n$ contains a family of  subsets $\{A_1,A_2,\cdots,A_{m'}\}$ with $|A_i|\geq cn$ for each $i\in [m']$, then there is a subset $\{A_1',A_2',\cdots,A_{m}'\}\subseteq\{A_1,A_2,\cdots,A_{m'}\}$ such that  $|\cap_{i\in[m]}A_i'|\geq c'n$.  
	\end{lem}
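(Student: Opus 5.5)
The plan is a double counting of incidences between points of $A$ and $m$-element subfamilies, followed by an averaging (Jensen) step. The constants can be chosen explicitly: take $m'=\lceil 2m/c\rceil$ (which is at least $m$, since we may assume $c\le 1$) and $c'=(c/2)^m$ (which is at most $c$).

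\emph{Degrees and the double count.} For $x\in A$, let $d(x)$ be the number of indices $i\in[m']$ with $x\in A_i$. Counting pairs $(x,i)$ with $x\in A_i$ gives
\[
\sum_{x\in A}d(x)=\sum_{i=1}^{m'}|A_i|\ge cm'n ,
\]
so the average degree is at least $cm'$. Next, count pairs $(x,\mathcal S)$ where $\mathcal S$ is an $m$-element subfamily of $\{A_1,\ldots,A_{m'}\}$ and $x\in\bigcap\mathcal S$. This number equals $\sum_{x\in A}\binom{d(x)}{m}$. Hence some subfamily $\mathcal S$ satisfies
\[
\Bigl|\bigcap\mathcal S\Bigr|\ge \frac{1}{\binom{m'}{m}}\sum_{x\in A}\binom{d(x)}{m}.
\]

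\emph{The convexity step.} Extend $\phi(t)=\binom{t}{m}=t(t-1)\cdots(t-m+1)/m!$ to real $t$ by setting $\phi(t)=0$ for $t\le m-1$. This function is nonnegative, nondecreasing and convex on $[0,\infty)$, and it agrees with the binomial coefficient at integers. Jensen's inequality and monotonicity then give
\[
\sum_{x\in A}\binom{d(x)}{m}\ge n\,\phi\!\Bigl(\tfrac1n\sum_x d(x)\Bigr)\ge n\,\phi(cm').
\]
I expect this to be the only point needing care: one must check that the piecewise extension is genuinely convex at the gluing point $t=m-1$. This holds because the polynomial vanishes there and is increasing and convex for $t\ge m-1$.

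\emph{Conclusion.} It remains to bound the ratio $\phi(cm')/\binom{m'}{m}$. Factor by factor,
\[
\frac{\phi(cm')}{\binom{m'}{m}}=\prod_{j=0}^{m-1}\frac{cm'-j}{m'-j}\ge\Bigl(\frac{cm'-m}{m'}\Bigr)^m\ge\Bigl(\frac c2\Bigr)^m ,
\]
where the last inequality uses $m'\ge 2m/c$. Therefore the chosen subfamily $\mathcal S=\{A_1',\ldots,A_m'\}$ satisfies $\bigl|\bigcap_{i\in[m]}A_i'\bigr|\ge c'n$, as required.
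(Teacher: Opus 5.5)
Your proof is correct. The paper does not prove this lemma. It quotes it from Simonovits [Lemma 4] and uses it only as a black box in the proof of Theorem~\ref{g:lem-high-bounded}(i), so there is no in-paper argument to compare against.

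What you give is the standard K\H{o}v\'ari--S\'os--Tur\'an type double count of pairs $(x,\mathcal S)$, followed by convexity of $t\mapsto\binom{t}{m}$. The details hold up:
\begin{itemize}
\item The glued function $\phi$ is convex. At $t=m-1$ its left derivative is $0$ and its right derivative is $1/m>0$. On $[m-1,\infty)$ the polynomial is increasing and convex, because the roots of its first two derivatives interlace the roots $0,\ldots,m-1$ (for $m=1$ it is linear).
\item The condition $cm'\ge 2m$ guarantees that $\phi(cm')$ is the genuine polynomial value.
\item The same condition makes every factor $(cm'-j)/(m'-j)$ at least $c-m/m'\ge c/2$.
\end{itemize}

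Two small points are worth stating explicitly. First, the reduction to $c\le 1$ is actually needed, since for $c>2$ your $m'=\lceil 2m/c\rceil$ could be smaller than $m$. It is legitimate because for $c>1$ and $n\ge 1$ no such family exists. Second, the chosen subfamily should be read as $m$ distinct indices, since the sets $A_i$ may coincide.

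Compared with the bare citation, your version gives explicit constants $m'=\lceil 2m/c\rceil$ and $c'=(c/2)^m$. In particular it records that $c'>0$, which is the real content of the lemma even though the statement only writes $c'\le c$. It also makes the iterated applications in the paper's proof quantitatively explicit.
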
 
	
	\begin{thm} \label{g:lem-high-bounded}  Let $K$ be  fixed, and let $G_n$ be a $n$-vertex $F$-free graph satisfying  
		$ e(G_n)\geq \ex(n,F)-Kn.$
		For every fixed $\varepsilon>0$, with
		respect to a maximum $d$-cut \(V(G)= U_1\cup\cdots\cup U_d\),  $e(G_n)$  has the following properties for sufficiently large  $n$.
		\begin{enumerate}[label=(\roman*)]
			\item $S:=|\{x\in G_n\mid \alpha(x)>\varepsilon n$ or $\beta(x)>\varepsilon n\}|=O_{\varepsilon}(1)$.
			\item If $\varepsilon< \frac{1}{2v(F)d^2}$, then $G[U_p\setminus S]$ is $M_a\cup B$-free for  all $p\in[d]$.
			\item  $I=O_F(n)$.
			$M=O_F(n)$.
		\end{enumerate}

	\end{thm}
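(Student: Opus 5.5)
We prove the three parts in the order (iii), (i), (ii), because the counting in (iii) controls the size of the exceptional set in (i).

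\textbf{Part (iii).} Since $e(G_n)\ge \ex(n,F)-Kn\ge t_d(n)-Kn$, Theorem~\ref{g:thm-stable} gives a $d$-partition with $o(n^2)$ internal edges and classes of order $n/d+o(n)$. Because the partition maximizes $P$, it satisfies $M\le \varepsilon' n^2$ and $I\le\varepsilon' n^2$ for any fixed $\varepsilon'>0$.

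Let $W$ be the set of vertices $x$ with $|\beta_j(x)|>\varepsilon_0 n$ for some $j$, where $\varepsilon_0<1/(3d\,v(F))$ as in Claim~\ref{g:lem-good-internal}.
\begin{itemize}
\item Each vertex of $W$ contributes at least $\varepsilon_0 n$ missing pairs, so $M\ge \varepsilon_0 n|W|/2$.
\item By \eqref{g:LC}, a vertex $x\in W\cap U_i$ has $d_G(x,U_i)\le d_G(x,U_j)$ for every $j\neq i$. Hence its internal degree is at most $\min_j(|U_j|-|\beta_j(x)|)$, and each internal edge at $x$ is matched by at least $|\beta_j(x)|/(n/d)$ missing pairs.
\end{itemize}
I would instead bound internal edges at $W$ crudely: $|\alpha(x)|\le |U_i|-|\beta_j(x)|$ gives $|\alpha(x)|-|\beta(x)|\le n/d-2\varepsilon_0 n+o(n)$. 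Combining this with Claim~\ref{g:lem-good-internal}, which gives $O_F(n)$ internal edges inside the good parts, yields
\[
I-M\le O_F(n)+|W|\bigl(n/d-2\varepsilon_0 n+o(n)\bigr)-\varepsilon_0 n|W|/2 .
\]

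The difficulty is that this does not immediately force $|W|=O(1)$. I would use the extremality-type bound $e(G_n)\le t_d(n)+O(n)$, which holds because $G_n$ is $F$-free. Together with $e(G_n)\ge t_d(n)-Kn$ and $e=P-M+I$, $P\le t_d(n)$, this gives $I-M\ge -Kn-O(n)$. A vertex $x$ with $|\beta(x)|\ge\varepsilon_0 n$ and large internal degree would let us embed $F$, by Lemma~\ref{g:lem-2.11} applied to neighbourhoods. So such $x$ have small internal degree, each $x\in W$ contributes net $-\Omega(n)$ to $I-M$, and therefore $|W|=O(1)$. Once $W$ is bounded, $I=O_F(n)$ follows from Claim~\ref{g:lem-good-internal} together with edges at $W$ (at most $|W|n$), and then $M\le I+O(n)=O_F(n)$.

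\textbf{Part (i).} This is now a counting argument. Since $\sum_x|\alpha(x)|=2I=O(n)$ and $\sum_x|\beta(x)|=2M=O(n)$, at most $O(1/\varepsilon)$ vertices have $|\alpha(x)|>\varepsilon n$ or $|\beta(x)|>\varepsilon n$.

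\textbf{Part (ii).} We repeat the embedding argument of Claim~\ref{g:lem-good-internal}. Vertices outside $S$ have at most $\varepsilon n<n/(2v(F)d^2)$ missing neighbours in each class. Suppose a copy of $M_a\cup B$ lies in $U_p\setminus S$. Then the $d-1$ colour classes of $K(m_1,\dots,m_{d-1})$ can be chosen greedily in the other classes $U_j\setminus S$: each chosen vertex excludes at most $\varepsilon n$ candidates, so fewer than $v(F)\varepsilon n<|U_j|/2$ vertices are excluded. This produces $F$, a contradiction.

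The main obstacle is the step in (iii) that bounds $|W|$: showing that vertices with many missing edges are boundedly many, via the local-cut inequality \eqref{g:LC} and a neighbourhood-intersection embedding.
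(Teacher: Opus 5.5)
\textbf{The gap: bounding $|W|$ in (iii).} You claim that a single vertex $x$ with $|\beta(x)|\ge\varepsilon_0 n$ and large internal degree yields a copy of $F$ via Lemma~\ref{g:lem-2.11}. From this you conclude that every $x\in W$ has small $|\alpha(x)|$ and contributes $-\Omega(n)$ to $I-M$.

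This is false as stated. Lemma~\ref{g:lem-2.11} takes $m'$ linear sets as input, so it applies only to \emph{many} such vertices, and individual exceptions do occur. For example, take $H(n,a+1,d)$ and delete the edges from one vertex $c$ of the $K_{a+1}$ to half of each part. The graph is still $F$-free (Lemma~\ref{g:lem-H-free}) and loses only $O(n)$ edges. Yet in any maximum $d$-cut, $|\alpha(c)|\approx n/(2d)$ and $|\beta(c)|\approx (d-1)n/(2d)$.

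What the common-neighbourhood argument actually proves is that $B=\{x:|\alpha(x)|\ge\varepsilon_1 n\}$ has bounded size. This is the paper's treatment of $B_p$, where \eqref{g:LC} gives each high-internal-degree vertex linear neighbourhoods in every class. The count must then keep $B$ as an exception. Take $\varepsilon_1<\varepsilon_0/2$, and apply Claim~\ref{g:lem-good-internal} with $U_i^{\rm good}=U_i\setminus W$ together with $2M\ge\sum_{x\in W}|\beta(x)|$. This gives
\[
-Kn\le I-M\le f_0n+\sum_{x\in W}\Bigl(|\alpha(x)|-\tfrac12|\beta(x)|\Bigr)\le (f_0+|B|)\,n-\Bigl(\tfrac{\varepsilon_0}{2}-\varepsilon_1\Bigr)n\,|W\setminus B|,
\]
so $|W|=O(1)$.

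A smaller point: the bound $e(G_n)\le t_d(n)+O(n)$, which you cite ``because $G_n$ is $F$-free'', is essentially (iii) itself. It is not needed, since $I-M=e(G_n)-P\ge -Kn$ uses only $e(G_n)\ge \ex(n,F)-Kn\ge t_d(n)-Kn$ and $P\le t_d(n)$.

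\textbf{Comparison with the paper.} With this repair, your route works and is genuinely different from the paper's. The paper proves (i) directly. It handles the vertices $B_p$ with large $\alpha$ by the Lemma~\ref{g:lem-2.11} embedding. It handles the vertices $C_p$ with small $\alpha$ and large $\beta$, which have degree below $(1-\frac1d-\frac\varepsilon3)n$, by deleting them and comparing with $\ex(n-c,F)$. That comparison uses the increment bound $\ex(n+c,F)\ge \ex(n,F)+c(1-\frac1d+o(1))n$, which the paper justifies by a symmetrization argument. The paper then derives (iii) from (i), (ii) and $\ex(n,M_a\cup B)=O(n)$.

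You replace the deletion/increment step by the double count above. It uses only $I-M\ge -Kn$ and Claim~\ref{g:lem-good-internal}, so it avoids the increment bound and its symmetrization altogether. Part (i) then follows for every $\varepsilon$ by Markov's inequality from $\sum_x|\alpha(x)|=2I$ and $\sum_x|\beta(x)|=2M$. Both routes still need the common-neighbourhood embedding for vertices of high internal degree. Your (ii) is the same greedy embedding as the paper's.
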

	\begin{proof}	(i)
		Let $U_1\cup\cdots\cup U_d$ be the 	maximum $d$-cut  for $G_n$.
		For every $p\in[d]$, let
		\begin{align*}
			&B_p=\{x:x\in U_p, |\alpha(x)|\geq\varepsilon^2n\},\\	
			&C_p=\{x:x\in U_p, |\alpha(x)|<\varepsilon^2n, |\beta(x)|\geq\varepsilon n \}.	
		\end{align*}
		It is  sufficient to prove $B_p=O_F(1)$ and $C_p=O_F(1)$ since $\{x\mid \alpha(x)>\varepsilon n$ or $\beta(x)>\varepsilon n\}\subseteq B_p\cup C_p$. 
		
		
		We first prove that $B_p=O_F(1)$. Without loss of generality, suppose $B_1=\{x_1,\cdots,x_{m_0}\}$. Next, we will iteratively determine the  integers $m_0\geq m_1\geq\cdots \geq m_d$ and integers $c_1\geq\cdots\geq c_d$ such that for every $1\leq i\leq d$,  there are $m_i$ subsets $A_{i,1},\cdots,A_{i,m_i}$  of $A_i$, so that we can choose $m_{i+1}$ sets $A_{i,1}',\cdots,A_{i,m_{i+1}}'$ from $ \{A_{i,1},\cdots,A_{i,m_i}\}$ such that $|\cap_{1\leq j\leq i+1} A_{i,m_{j}}'|\geq c_i n$.

		Following \eqref{g:LC}, we can obtain the vertices $x_1,\cdots,x_{m_0}$ each  of which is joined to at least $\varepsilon^2n$ other vertices of $A_i$ for each $i\in[d]$.  Let $A_{i,j}$ denote the vertices of $A_j$ joined to $x_i$. Applying  Lemma \ref{g:lem-2.11} to $A_{1,1},\cdots, A_{1,m_0}$, we know that there are  $m_{1}$ vertices among $x_1,\cdots,x_{m_0}$ and a set $U_1\subseteq \cap_{i\in [m_0]} A_{1,i}$ such that $|U_1|\geq \varepsilon^2n$. Without loss of generality, let's assume these $m_{1}$ vertices are $x_1,\cdots,x_{m_{1}}$. Applying Lemma \ref{g:lem-2.11} again to $A_{2,1},\cdots, A_{2,m_{1}}$, then  there are  $m_{2}$ vertices of $x_1,\cdots,x_{m_{1}}$ and a set $U_2\subseteq \cap_{i\in [m_{1}]} A_{2,i}$ such that $|U_2|\geq \varepsilon^2n$. Repeating this operation,  we can obtain $m_d$ vertices ( assume that is $x_1,\cdots,x_{m_{d}}$) of $x_1,\cdots,x_{m_{d-1}}$ and $d$ subsets $U_1,\cdots, U_d$ such that  each of these $m_d$ vertices is joined to every vertex of $U_i$. On the other hand, the subgraph of $G_n$ spanned by the vertices $ U_1\cup\cdots\cup U_d$ contains a complete $d$-partite graph $K_{d}(v(F),\cdots,v(F))$ since the number of missing edges of $G_n$ is at most $o(n^2)$. Therefore, the subgraph of $G_n$ spanned by the vertices $\{x_1,\cdots,x_{m_d}\}\cup U_1\cup\cdots\cup U_d$ contains a copy of $F$ by setting $m_{d}=v(F)$, we get a contradiction.
		
		Next we prove that  $C_p=O_F(1)$. The following Claim is useful. 
		
		\begin{claim}\label{g:Claim 3} For any integer $c>0$, there is $\ex(n+c,F)\geq \ex(n,F)+c(1-\frac{1}{d}+o(1))n.$
		\end{claim}
		Let $X=\{x_1,x_2,\cdots,x_c\}$ be a vertex subset of $A_1$. Then $\sum_{i\in[c]} (\alpha(x_i)-\beta(x_i))=O_{\varepsilon}(n)$. Next, let $G_{n+c}$ be the graph obtained from $G_n$ by symmetrizing $c$ new vertices $\{y_1,\cdots, y_c\}$ onto $G(X)$, we know that  $G_{n+c}$ is $F$-free since Lemma 2.2. Hence,
		\begin{align*}\ex(n+c,F)\geq G_{n+c}&\geq G_{n}+c\left(1-\frac{1}{ d}-o(1)\right)n+\sum_{i\in[c]} (\alpha(x_i)-\beta(x_i))\\
			&\geq \ex(n,F)-Kn+c\left( 1-\frac{1}{d}-o(1)\right)n+O_{\varepsilon}(n)\\
			&= \ex(n,F)+c\left(1-\frac{1}{ d}+O_{\varepsilon}(1)\right) n.
		\end{align*}	
		This completes the proof.

		For any $y\in C_p$, we know that $d(y)\leq n-1-(\frac{n}{d}-o(1)n)+\varepsilon^2n-\varepsilon n<(1-\frac{1}{d}-\frac{1}{ 3}\varepsilon)n$ if $\varepsilon<1/3$. Let $y_1,\cdots,y_{c}$ be all vertices in $C_p$, and let $G_{n-c}=G_n-\{y_1,\cdots,y_{c}\}$. Therefore
		\begin{align*}
			\ex(n-c,F)\geq e(G_{n-c})\geq& e(G_n)-c\left(1-\frac{1}{d}-\frac{1}{ 3}\varepsilon\right)n\\
			&\geq \ex(n,F)-Kn-c\left( 1-\frac{1}{d}-\frac{1} {3}\varepsilon\right) n\\ &\geq\ex(n-c,F)+c\left( O_{\varepsilon}(1)+\frac{1} {3}\varepsilon\right)n -Kn,
		\end{align*}
		that means $c=O_F(1)$.

		(ii)  For each $p\in[d]$, $A_p\subseteq U_p^{\rm good}$. By Lemma \ref{g:lem-good-internal}, we have $G[A_p]$ is $M_a\cup B$-free. 
		
		
		(iii) Following (i), (ii) and Lemma \ref{g:Theorem 2.9.}, we have  $$|I|=\sum_{i=1}^de(G[U_p])<\sum_{i=1}^d(e_G(A_p)+e_G(B_p)+e_G(C_p))\leq d\cdot \ex(n,M_a\cup B)+O_F(1)=O_F(n).$$
		By \eqref{g:eq-IminusM}, $|M|=O_F(n)$.	
	\end{proof}
	
	\begin{defi}[Cleaned partition]
		\label{g:def:cleaned}
		Fix $\gamma>0$. A partition
		\(V(G)=W\cup U_1\cup\cdots\cup U_d\) of an $F$-free graph $G$ is called
		$\gamma$-cleaned
		if the following properties hold:
		\begin{enumerate}[label=(\roman*)]
			\item	$|W|=O_F(1)$, $|U_i|=n/d+o(n)$;
			\item for every vertex $v\in V(G)\setminus W$,  $|\alpha(v)|\le \gamma n$ and $|\beta(v)|\le \gamma n$;
			\item for every $i\in[d]$, the graph $G[U_i\setminus W]$ is $(M_a\cup B)$-free.
		\end{enumerate}	
		Moreover, we call $U_i$ principal classes for each $i\in[d]$.
	\end{defi}
	
	\begin{lem}[Cleaning Lemma]
		\label{g:lem-linear-cleaning}
		Fix a sufficiently small constant $\gamma>0$.  For every sufficiently large $n$ and every
		extremal $F$-free graph $G$, there is a $\gamma$-cleaned partition
		\(V(G)=W\cup U_1\cup\cdots\cup U_d\).
	\end{lem}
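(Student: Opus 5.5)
We take $W$ to be the exceptional set of Theorem~\ref{g:lem-high-bounded} and let the $U_i$ be the classes of a maximum $d$-cut with $W$ removed. Let $G$ be an extremal $F$-free graph on $n$ vertices. Then $e(G)=\ex(n,F)$, so the hypothesis $e(G_n)\ge \ex(n,F)-Kn$ of Theorem~\ref{g:lem-high-bounded} holds with $K=0$. By the Erd\H{o}s--Simonovits stability theorem (Theorem~\ref{g:thm-stable}), together with the lower bound $e(G)\ge e(H(n,a+1,d))=t_d(n)+O_F(n)$ from Lemma~\ref{g:lem-H-free}, a maximum $d$-cut $U_1'\cup\cdots\cup U_d'$ satisfies $|U_i'|=n/d+o(n)$. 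Moreover, every vertex obeys the local condition \eqref{g:LC}.

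Fix $\gamma>0$ with $\gamma<\frac{1}{2v(F)d^2}$; this is what ``sufficiently small'' means here. Apply Theorem~\ref{g:lem-high-bounded} with $\varepsilon=\gamma$. Part (i) gives a set
\[
W=\{x:\ |\alpha(x)|>\gamma n \text{ or } |\beta(x)|>\gamma n\}
\]
of size $O_\gamma(1)=O_F(1)$. Put $U_i=U_i'\setminus W$. Then:
\begin{itemize}[label={}]
\item Property (i) of Definition~\ref{g:def:cleaned} holds, since removing $O(1)$ vertices changes class sizes only by $O(1)$.
\item Property (ii) holds by the definition of $W$, because $\alpha$ and $\beta$ are computed with respect to the fixed cut.
\item Property (iii) is exactly Theorem~\ref{g:lem-high-bounded}(ii).
\end{itemize}

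The main obstacle is to make sure Theorem~\ref{g:lem-high-bounded}(ii) really applies. The class $U_i'\setminus W$ must lie inside the good set of Claim~\ref{g:lem-good-internal}. That claim requires $|\beta_j(x)|\le\varepsilon' n$ with $\varepsilon'<1/(3d\,v(F))$, and this follows from $|\beta(x)|\le\gamma n$ once $\gamma<1/(3dv(F))$. So I would choose $\gamma<\min\{1/(3dv(F)),\,1/(2v(F)d^2)\}$.

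The greedy embedding in Claim~\ref{g:lem-good-internal} needs one more check. A copy of $M_a\cup B$ inside $U_i'\setminus W$ must extend to a copy of $F$ by choosing the parts of $K(m_1,\dots,m_{d-1})$ in the other classes $U_j'\setminus W$. At each step, the vertices chosen so far forbid at most $v(F)\gamma n+|W|$ candidates through missing edges. This is less than $|U_j'|/2$, so each choice succeeds, and $W$ only costs $O(1)$ more. I would verify this bookkeeping explicitly, since the earlier claims were stated for the uncleaned cut.
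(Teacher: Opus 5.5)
Your proposal is correct and follows the paper's proof: you take a maximum $d$-cut, let $W$ be the set of vertices with more than $\gamma n$ internal or missing edges, and read off properties (i)--(iii) of the cleaned partition from Theorem~\ref{g:lem-high-bounded}. Your extra bookkeeping makes explicit what the paper leaves implicit, namely choosing $\gamma$ below the threshold of Claim~\ref{g:lem-good-internal} and taking the join vertices from outside $W$ so that each of them has few missing edges.
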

	\begin{proof}
		Start with a maximum $d$-cut $V_1\cup\cdots\cup V_d$. If $|M|=O_F(n)$, then  $|U_i|=n/d+o(n)$.
		Let
		\(W:=\{x:|\alpha(x)|>\gamma n\text{ or }|\beta(x)|>\gamma n\}\), and let  $U_i:=V_i\setminus W$.  
		By
		Lemma~\ref{g:lem-high-bounded},  \(V(G)=W\cup U_1\cup\cdots\cup U_d\)  is 
		$\gamma$-cleaned. 
	\end{proof}

	\subsection{Profile classes and the universal core}
	After obtaining a cleaned \(F\)-free graph in the previous subsection,
	we further analyze the local structure of its principal classes. We
	classify the vertices according to their neighborhoods in the bounded
	exceptional set $W$ and study the resulting profile classes.
	The main goal
	of this subsection is to show that the cleaned graph admits a highly
	restricted finite profile structure. These structural
	properties will be used in the final proof of Theorem~\ref{g:res-2}.
	\begin{lem}\label{g:lem-fork-bound}
		If a graph $H$ is fork-free, then
		$
		e(H)\le\frac32v(H).
		$
		Equality holds only when every component is $K_4$.  With arbitrary
		order of $H$, the exact maximum is attained by
		$\lfloor v(H)/4\rfloor K_4\cup K_r$, where
		$r\equiv v(H)\pmod4$ and $0\le r<4$.
	\end{lem}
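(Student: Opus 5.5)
The plan is to classify the connected fork-free graphs completely, and then optimize over how the vertices are split into components.

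\emph{Local observation.} The fork is a vertex $x$ with three neighbours $y_1,y_2,y_3$, together with one further vertex $z\notin\{x,y_1,y_2,y_3\}$ adjacent to $y_1$. It need not be induced. So $H$ contains a fork exactly when some vertex $x$ of degree at least $3$ has a neighbour $y$ with a neighbour $z\neq x$, such that $x$ has at least two neighbours outside $\{y,z\}$. I will use this in two cases.
\begin{itemize}
\item If $d(x)\ge4$, every neighbour $y$ of $x$ must have no neighbour other than $x$. Indeed, if $y$ had such a neighbour $z$, then $x$ would still have at least two neighbours outside $\{y,z\}$. Hence the component of $x$ is a star $K_{1,s}$, which has $s$ edges on $s+1$ vertices.
\item If $d(x)=3$ with $N(x)=\{y_1,y_2,y_3\}$, then no $y_i$ may have a neighbour outside $\{x,y_1,y_2,y_3\}$. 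Thus the component of $x$ lies inside $N[x]$. It has $4$ vertices and at most $6$ edges, with equality only for $K_4$.
\item If the maximum degree of a component is at most $2$, it is a path or a cycle, so it has at most as many edges as vertices.
\end{itemize}

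\emph{Components.} Consequently every component $C$ of a fork-free graph falls into one of three types:
\begin{itemize}
\item a star, with $e(C)=v(C)-1$;
\item a graph on at most $4$ vertices, with $e(C)\le\binom{v(C)}2$;
\item a path or cycle, with $e(C)\le v(C)$.
\end{itemize}
For each component define the deficit $\delta(C)=\tfrac32v(C)-e(C)$.

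Checking the types gives $\delta(C)\ge0$ in every case, with equality only for $K_4$. For components with $v(C)\le4$ we have $\delta(C)\ge 3s/2-\binom s2$ for $s=v(C)$. This equals $3/2,2,3/2,0$ for $s=1,2,3,4$. For $v(C)\ge5$ the component is a path, cycle or star, so $\delta(C)\ge v(C)/2\ge5/2$.

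Summing over components gives $e(H)=\tfrac32v(H)-\sum_C\delta(C)\le\tfrac32 v(H)$. Equality forces every deficit to be zero, that is, every component is $K_4$.

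\emph{Exact maximum.} Write $v(H)=4t+r$ with $0\le r<4$. To maximize $e(H)$ I need to minimize the total deficit, subject to the component orders summing to $v(H)$. Components of order divisible by $4$ other than $K_4$ cost at least $2$, so only the residue $r$ matters.
\begin{itemize}
\item For $r=0$ the minimum deficit is $0$.
\item For $r=1$ at least one component has order not divisible by $4$, so the deficit is at least $3/2$. This is attained by $K_1$.
\item For $r=3$ the same reasoning gives at least $3/2$, attained by $K_3$.
\item For $r=2$ either some component has order $\equiv2 \pmod 4$, costing at least $2$, or two components have odd order, costing at least $3/2+3/2$. So the deficit is at least $2$, attained by $K_2$.
\end{itemize}
Hence $\lfloor v(H)/4\rfloor K_4\cup K_r$ is optimal. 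It is fork-free, since each component has maximum degree at most $3$ and at most $4$ vertices.

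\emph{Main obstacle.} The crux is the local observation: one must correctly account for non-induced forks, since $z$ may be adjacent to other neighbours of $x$. The rest is bookkeeping with the deficits, where the only care needed is the case $r=2$.
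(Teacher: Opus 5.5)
Your proof is correct. For the bound $e(H)\le\frac32 v(H)$ and its equality case it is essentially the paper's argument. Both use the same local obstruction: a vertex of degree $3$ confines its component to $N[x]$, and a vertex of degree at least $4$ forces a star. You split the cases by $d(x)\ge 4$ versus $d(x)=3$, while the paper splits by $|V(C)|\le 4$ versus $|V(C)|\ge 5$.

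You genuinely differ on the exact maximum for arbitrary order. The paper only sketches a replacement argument: it exchanges non-$K_4$ components for copies of $K_4$ and one $K_r$ and asserts a strict gain. Your deficit bookkeeping with the mod-$4$ residue analysis actually proves that the total deficit is at least $0,\tfrac32,2,\tfrac32$ for $r=0,1,2,3$. That determines the maximum rigorously. The paper additionally claims that $\lfloor v(H)/4\rfloor K_4\cup K_r$ is the unique extremal configuration. Your argument does not address this, but the lemma as stated does not require it.

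One sentence in your write-up is false and should be deleted. Components of order divisible by $4$ other than $K_4$ do not all cost at least $2$: $K_4$ minus an edge is fork-free with deficit $6-5=1$. Your case analysis never uses that remark. It relies only on $\delta(C)\ge 0$ and on the lower bounds for components of order not divisible by $4$, so the conclusion stands.

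If you wanted to recover the paper's uniqueness claim, you would need the correct list of small deficits:
\begin{itemize}
\item $0$ occurs only for $K_4$;
\item $1$ occurs only for $K_4$ minus an edge;
\item $\tfrac32$ occurs only for $K_1$ and $K_3$;
\item every other component has deficit at least $2$.
\end{itemize}
You would then check that the residue constraints rule out $K_4$ minus an edge in each case.
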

	
	\begin{proof}
		We prove the bounds for each component. Let $C$ be a component of $H$.
		
		If $\Delta(C)\le 2$, then $C$ is a path or a cycle, so $e(C)\le |V(C)|\le \frac{5}{4}|V(C)|$.
		If $\Delta(C)\ge 3$ and $|V(C)|\le 4$, a direct check gives $e(C)\le 6 = \frac{3}{2}|V(C)|$, and $e(C)\le 5 = \frac{5}{4}|V(C)|$ whenever $C\not\cong K_4$.
		
		Now assume $\Delta(C)\ge 3$ and $|V(C)|\ge 5$. Let $u$ be a vertex of maximum degree with neighbors $u_1,\dots,u_r$ ($r\ge3$). If there exists a vertex $v$ adjacent to, say, $u_1$ with $v\notin\{u,u_2,\dots,u_r\}$, then the induced subgraph on $\{u,u_1,u_2,u_3,v\}$ contains the edges $uu_1,uu_2,uu_3$ and $u_1v$, which form a fork, a contradiction. Hence $V(C)=\{u,u_1,\dots,u_r\}$, so $r\ge4$. Moreover, no two leaves $u_i,u_j$ ($i,j\ge2$) can be adjacent, otherwise a fork appears. Thus $C$ is a star $K_{1,r}$ and $e(C)=|V(C)|-1$. For $|V(C)|\ge5$ we have $|V(C)|-1\le\frac{5}{4}|V(C)|$.
		
		In all cases, $e(C)\le\frac{3}{2}|V(C)|$ holds for every component,  with equality only for
		$K_4$. Summing over components yields $e(H)\le\frac{3}{2}|V(H)|$. For the exact remainder statement, write $v(H)=4s+r$, $0\le r<4$.
		Every component of order at least five which is not $K_4$ has at most
		its order many edges, and every noncomplete component of order four
		has at most five edges.  Replacing such components by copies of $K_4$
		and one clique $K_r$ never decreases the edge count and is strict
		unless the original components already have this form.  Hence the
		unique extremal component multiset is $sK_4\cup K_r$, as asserted.
	\end{proof}
	
	\begin{lem}\label{g:lem-dichotomy}
		If $A$ is $(M_a\cup B)$-free, then either $A$ is fork-free or there is
		$X\subseteq V(A)$ with $|X|\le2a+3$ such that $A-X$ is independent.
	\end{lem}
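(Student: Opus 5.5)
Suppose $A$ is $(M_a\cup B)$-free but contains a fork. We will show that some set $X$ of at most $2a+3$ vertices covers all edges of $A$. Let $\nu$ be the largest integer such that $A$ contains a fork $B_0$ together with a matching $N$ of $\nu$ edges that is vertex-disjoint from $B_0$. Since $A$ is $(M_a\cup B)$-free, $\nu\le a-1$. Fix such a pair $(B_0,N)$ and put $X_0=V(B_0)\cup V(N)$, so $|X_0|=5+2\nu\le 2a+3$. The first step uses the maximality of $\nu$. Any edge of $A-X_0$ could be added to $N$, which would contradict maximality. Hence $A-X_0$ is independent, and we may take $X=X_0$. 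In this form the argument is immediate: every edge of $A$ meets $X_0$.

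The bound $2a+3=5+2(a-1)$ is attained exactly when $\nu=a-1$, and the argument above works for any value of $\nu$. So there is no real obstacle, except to check that "independent" refers to $A-X$ having no edges. That is exactly what maximality gives. When $\nu<a-1$ the set $X_0$ is even smaller than needed, which is still fine, since only $|X|\le 2a+3$ is required.

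The only case left is when $A$ contains no fork at all. Then $A$ is fork-free, which is the first alternative of the dichotomy. So the proof is a two-case split: either $A$ is fork-free, or we take a fork together with a maximum disjoint matching, and its vertex set is the required $X$. I expect this to be the whole argument. Lemma~\ref{g:lem-fork-bound} is not needed here; it only becomes relevant later, when the fork-free alternative is used to bound edge counts.
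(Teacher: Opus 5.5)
Your proof is correct and is essentially the paper's argument. The paper fixes any fork $B_0$, notes that $A-V(B_0)$ has matching number at most $a-1$, and takes $V(B_0)$ together with the endpoints of a maximal matching of $A-V(B_0)$, giving $|X|\le 5+2(a-1)=2a+3$. Maximizing over pairs $(B_0,N)$ is more than you need, since a maximal matching avoiding one fixed fork already works, but it does no harm.
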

	
	\begin{proof}
		Fix a fork $B_0\subseteq A$.  Then $A-V(B_0)$ has matching number at
		most $a-1$.  The endpoints of a maximal matching form a vertex cover
		of order at most $2a-2$.  Together with $V(B_0)$ they give $X$.
	\end{proof}

	\begin{defi}[Profile classes and positive-density types]
		\label{g:def-profile-class}	
		Let  $V(G)=W\cup U_1\cup\cdots\cup U_d$ be a cleaned partition for graph $G$.	
		For each \(i\in[d]\) and \(Y\subseteq W\), define the profile class
		\[
		U_{i,Y}:=\{u\in U_i:N_G(u)\cap W=Y\}.
		\]
		We call \(U_{i,Y}\) a positive-density type if
		\(|U_{i,Y}|=\Omega_F(n)\). Otherwise, we call it a negligible type.
		The class \(U_{i,\emptyset}\) is referred to as the empty type of
		\(U_i\).
	\end{defi}
	\begin{lem}
		\label{g:lem-profile-reduction}
		Fix a sufficiently small constant $\gamma>0$.	Let $G$ be a $F$-free graph with $\gamma$- cleaned partition  $V(G)=W\cup U_1\cup\cdots\cup U_d$. After enlarging \(W\) by at
		most \(O_F(1)\) vertices (if necessary), and redefining the profile classes,
		the following hold:
		\begin{enumerate}[label=(\roman*)]
			\item for every $i\in[d]$, the graph $G[U_{i,\varnothing}]$ is fork-free;
			\item 	for every vertex  $v\in V(G)\setminus W$, $|\alpha  (v)|\le \gamma n$ and $|\beta (v)|\le \gamma n$;
			\item  every  profile class $U_{i,Y}$ with non-empty $Y\subseteq W$ is incident with only $o(n)$ internal edges;
			
			\item \(\sum_{i=1}^d e(G[U_i])\le\frac32\sum_{i=1}^d|U_{i,\varnothing}|+o(n)\).
		\end{enumerate}
	\end{lem}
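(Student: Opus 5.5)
The plan is to enlarge $W$ in two stages: first absorb the finitely many vertices carrying a fork inside an empty type, then absorb the endpoints of a maximal matching of internal edges on the nonempty profile classes. Part (ii) is inherited from the cleaned partition (Definition~\ref{g:def:cleaned}(ii)) as long as every vertex we add to $W$ is only removed from the classes $U_i$. So the work is in (i), (iii) and (iv), and in making sure the enlargement of $W$ stays $O_F(1)$. Adding vertices to $W$ changes the profiles, but only into finitely many classes, namely at most $2^{|W|}$ values of $Y$. Any statement of the form ``only $O_F(1)$ exceptional vertices'' therefore survives the redefinition.

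For (i) and (iii) the key observation is the following. A vertex $u\in U_i$ with $N(u)\cap W=Y\neq\varnothing$ is adjacent to some $w\in W$. By Lemma~\ref{g:lem-high-bounded}(i)/(ii) and the argument of Claim~\ref{g:lem-good-internal}, any bounded configuration inside $U_i\setminus W$ can be extended greedily by the classes of $K(m_1,\ldots,m_{d-1})$ in the other principal classes, since every non-$W$ vertex has at most $\gamma n$ missing edges.

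For (i), suppose some $G[U_{i,\varnothing}]$ contains a fork. Lemma~\ref{g:lem-dichotomy} applied to $A=G[U_i\setminus W]$, which is $(M_a\cup B)$-free by cleaning, yields a set $X$ with $|X|\le 2a+3$ such that $A-X$ is independent. Move $X$ into $W$. Then $G[U_{i,\varnothing}]$ is edgeless, hence fork-free. Doing this for each $i$ costs at most $d(2a+3)$ vertices.

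For (iii), I would argue that if the internal edges incident with nonempty-profile classes number $\Omega(n)$, then there is a large matching of them. By pigeonhole over the $O(1)$ profiles $Y$, $\Omega(n)$ of these edges touch vertices whose profile contains a fixed $w\in W$. Since $\alpha(v)\le\gamma n$, the greedy algorithm extracts from them a matching of size $\Omega(1/\gamma)$, which exceeds $a+5$. Such a matching, together with $w$ and its neighbours, lets one build $M_a\cup B$ plus an extra vertex structure that reduces to embedding $F$. For instance, $w$ adjacent to both ends of several matching edges, or to one end each, forms a fork whose centre is $w$ and whose pendant edge is a matching edge. The complete multipartite part is again found greedily in the other classes using their common neighbourhoods (Lemma~\ref{g:lem-2.11}). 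Here the high degree of $w$ matters, because $w$ must be joined to the other classes too. If $w$ has few neighbours in some other class, we instead use that $w\in W$ has large $\alpha$ or $\beta$, and then apply the max-cut inequality \eqref{g:LC}. The contradiction then shows that the number of such edges is $o(n)$. Alternatively, after removing $O(1)$ matching endpoints into $W$, the remaining count is bounded by $\gamma n\cdot O(1)$, and letting $\gamma\to0$ gives $o(n)$.

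Part (iv) then follows by splitting each $e(G[U_i])$. Edges inside $U_{i,\varnothing}$ number at most $\frac32|U_{i,\varnothing}|$ by Lemma~\ref{g:lem-fork-bound} and (i). Edges meeting nonempty types contribute $o(n)$ by (iii). Summing over $i$ gives the bound.

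The main obstacle is (iii). One must show that a vertex $w\in W$ whose neighbourhood meets a class with many internal edges actually yields a copy of $F$. This requires that $w$ be adjacent to a positive fraction of each of the other $d-1$ classes, which must be extracted from the extremality or the max-cut property. If it fails, I would fall back on the weaker ``$O(\gamma n)$'' bound, absorbed into $o(n)$ by choosing $\gamma$ small.
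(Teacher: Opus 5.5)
Your treatment of (i), (ii) and (iv) matches the paper's route and is fine. Applying Lemma~\ref{g:lem-dichotomy} to $G[U_i\setminus W]$ is, if anything, cleaner, and (iv) does follow from (i), (iii) and Lemma~\ref{g:lem-fork-bound}.

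The gap is in (iii), at exactly the step you flag, and it is more than a detail. To extend the fork centred at $w\in W$ and the $a$ matching edges to a copy of $F$, every vertex of $K(m_1,\ldots,m_{d-1})$ must be adjacent to $w$. The cleaned-partition hypotheses bound non-neighbours only for vertices outside $W$, so nothing gives $w$ any neighbour in $U_j$, $j\ne i$. Your repair via \eqref{g:LC} fails. That inequality refers to the original max-cut classes and only gives $d(w,V_j)\ge|\alpha(w)|$. This is vacuous when $w$ entered $W$ because $|\beta(w)|$ is large, and the lemma concerns an arbitrary cleaned partition anyway.

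In fact (iii) does not follow from $F$-freeness and cleanness at all. Take the complete $d$-partite graph on balanced classes $U_1,\ldots,U_d$, add a $K_4$-factor inside $U_1$, and add one vertex $w$ joined exactly to $U_1$. This graph is $F$-free. A vertex $u$ of $F$ placed at $w$ would force $F[N_F(u)]$, which contains a fork or a star $K_{1,m_1}$, into the disjoint union of $K_4$'s $G[U_1]$. If $w$ is unused, the preimage $S$ of $U_1$ must contain $V(M_a\cup B)$, or a whole class of $K(m_1,\ldots,m_{d-1})$ plus one further vertex, since $F-S$ is $(d-1)$-colourable. So $F[S]$ again contains a fork or a star $K_{1,m_j}$. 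With $W=\{w\}$ the partition is $\gamma$-cleaned: outside $W$ we have $|\alpha|\le3$ and $|\beta|=0$, and every $G[U_i]$ is fork-free. Yet $U_{1,\{w\}}=U_1$ spans $\tfrac32|U_1|$ internal edges, and adding $O_F(1)$ vertices to $W$ still leaves $\Omega(n)$ internal edges in nonempty profile classes.

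So (iii) genuinely needs the extremality of $G$, which is the setting where the lemma is used. For example, extremality could show that the relevant $w$ has linearly many common neighbours with the other vertices of $M_a\cup B$ in each $U_j$, $j\ne i$. The margin must be large compared with their up to $\gamma n$ non-neighbours. The paper's own proof does not supply this either: it performs the same greedy extension by citing (ii) and Claim~\ref{g:lem-good-internal}, which control only vertices outside $W$.

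Your fallback does not rescue (iii). The greedy matching has only about $\delta/(2\gamma)$ edges. Even a working embedding therefore bounds the relevant internal edges by $O_F(\gamma n)$, not $o(n)$. Since $\gamma$ is fixed in the hypothesis and the partition depends on it, you cannot let $\gamma\to0$. For instance, a star of $\gamma n/2$ internal edges inside a nonempty profile class has matching number one. The paper's ``$\gamma\ll\delta$, $\delta$ arbitrary'' step has the same defect.

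Moreover, the bound ``$\gamma n\cdot O(1)$ after moving $O(1)$ matching endpoints into $W$'' presupposes a bounded matching, which is exactly what the failed embedding was meant to provide. Moving those endpoints into $W$ also redefines the profiles. Their up to $\gamma n$ internal neighbours land in nonempty classes, so the count after the move is not controlled.
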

	\begin{proof}

		\textup{(i)} 
		Apply Lemma~\ref{g:lem-dichotomy} to every $G[U_{i,\emptyset}]$. In the non-fork-free branch,
		there exists a vertex set
		$C_i\subseteq U_i\setminus W$ with
		$	|C_i|=O_F(1)
		$
		such that
		$	G[U_i\setminus (W\cup C_i)]
		$
		is independent. Enlarging the exceptional set $W$ by all such sets
		$C_i$, which changes $|W|$ only by $O_F(1)$.
		The remaining graph
		in that principal class is independent. This proves~(i) after the profiles are
		redefined. Clearly,~(ii) is from  the cleaning property; enlarging $W$ by $O_F(1)$ vertices does not affect it.

		~\textup{(iii)} 
		Fix $\delta>0$, and choose the cleaning
		parameter $\gamma\ll\delta$ sufficiently small. Suppose, for contradiction, 
		there exist an index $i\in[d]$ and non-empty $Y\subseteq W$, $U_{i,Y}$  is incident with $
		\delta n
		$ internal edges of $G[U_i]$.  
		By (ii),  $|\alpha  (v)|\le \gamma n$ for every vertex  $v\in V(G)\setminus W$, the set of internal edges  above contains a matching of size
		at least
		$\frac{\delta n}{2\gamma n+1}.$
		Taking $\gamma$ sufficiently small, this matching contains more than
		$a+4$
		pairwise vertex-disjoint edges.		Let
		$
		x_1y_1,\ldots,x_sy_s
		$
		be such a matching, where
		$
		s=a+4.
		$
		By the definition of $U_{i,Y}$,	there is 	vertex
		$
		w\in Y
		$
		such that  every vertex of $U_{i,Y}$ is adjacent
		to $w$.	
		We may choose one edge, say $x_1y_1$, together with two further
		vertices
		$
		z_1,z_2\in U_{i,Y}
		$
		disjoint from it. Then
		$
		w,x_1,y_1,z_1,z_2
		$
		contains a copy of the fork $B$, with $w$ as its center.
		Next choose $a$ edges from the remaining matching edges, say
		$	x_{j_1}y_{j_1},\ldots,x_{j_a}y_{j_a},
		$
		which are disjoint from the above copy of $B$. These edges form a copy
		of the matching $M_a$.

		It remains to embed 
		$
		K(m_1,\ldots,m_{d-1}).
		$
		By (ii), ,  $|\beta (v)|\le \gamma n$ for every vertex  $v\in V(G)\setminus W$.
		Fixing the vertices already selected for
		$M_a\cup B$, and taking $\gamma$ sufficiently small. By  Lemma \ref{g:lem-good-internal},  we can greedily obtain a copy of
		$
		F=(M_a\cup B)\vee K(m_1,\ldots,m_{d-1}),
		$
		contradicting the assumption that $G$ is $F$-free.
		Therefore the number of internal edges incident with 
		non empty types is less than
		$
		\delta n.
		$
		Since $\delta>0$ is arbitrary, this contribution is $o(n)$.	
		
		(iv) Let $D_i$ be the union of the negligible nonempty profiles in $U_i$.
		Since \(|W|=O_F(1)\), the number of profile classes is at most
		\(2^{|W|}=O_F(1)\). Hence
		$
		|D_i|=O_F( n).
		$
		After the bounded enlargement of \(W\) supplied by
		Lemma~\ref{g:lem-dichotomy}, each principal class is in one of the
		following two situations. It belongs to the independent branch. Otherwise
		it belongs to the fork-free branch, then the induced graph
		$
		G[D_i\cup U_{i,\emptyset}]
		$
		is fork-free. Hence Lemma~\ref{g:lem-fork-bound} gives
		\[
		e\bigl(G[D_i\cup U_{i,\emptyset}]\bigr)
		\leq
		\frac32\bigl(|D_i|+|U_{i,\emptyset}|\bigr)
		=
		\frac32|U_{i,\emptyset}|+O_F( n).
		\]
		Edges from $D_i$ to positive-density nonempty profiles are already included in
		the $o(n)$ contribution from~(iii). Summing over $i\in[d]$ proves~(iv). 
	\end{proof}

	\begin{lem}
		\label{g:lem-profile}
		Assume $a+1>  3d/2$. 	Let $G$ be a $F$-free graph with  cleaned partition  $V(G)=W\cup U_1\cup\cdots\cup U_d$. Let
		\(U_{i,Y}\) be the profile classes given by Lemma~\ref{g:lem-profile-reduction}.
		Then 
		\[
		e(G)\leq t_d(n)+\frac{a+1}{d}n+o(n).
		\]
		Moreover, equality in the above bound implies that there exists
		a set
		$C\subseteq W, |C|=a+1,
		$ such that for every positive-density type
		$U_{i,Y}$,
		$
		C\subseteq Y.
		$
		And every vertex \(w\in W\setminus C\) satisfies the
		following property: there exists a unique \(i\in[d]\) such that
		\(w\notin Y\) for every positive-density type \(U_{i,Y}\), while
		\(w\in Y\) for every positive-density type \(U_{j,Y}\) with
		\(j\neq i\).	
	\end{lem}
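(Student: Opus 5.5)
We bound $e(G)$ by splitting the edges according to whether they meet the bounded set $W$, writing
\[
e(G)=e(G-W)+\sum_{w\in W}d_{G-W}(w)+O_F(1).
\]
\textbf{Edges avoiding $W$.} We use $e(G-W)\le P'-M'+I'$, where $P'=\sum_{i<j}|U_i||U_j|\le t_d(n-|W|)$. By Lemma~\ref{g:lem-profile-reduction}(iv),
\[
I'\le \tfrac32\sum_i|U_{i,\varnothing}|+o(n).
\]
\textbf{Edges at $W$.} A vertex $w\in W$ meets $U_{i,Y}$ exactly when $w\in Y$, so
\[
d_{G-W}(w)=\sum_{i}\sum_{Y\ni w}|U_{i,Y}|.
\]
Up to $o(n)$ only positive-density types matter, since there are $O_F(1)$ profiles.

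\textbf{Key constraint from $F$-freeness.} Let $C_{i,Y}$ be a positive-density type. We claim $|Y|\le a+1$ after counting, and more precisely the following. Suppose positive-density types $U_{1,Y_1},\dots,U_{d,Y_d}$, one in each class, share a common set $Z=Y_1\cap\cdots\cap Y_d$ with $|Z|\ge a+2$. Then we embed $F$ by mapping $M_a\cup B$ into $Z$ plus a few vertices. We need $M_a\cup B$ to sit with $Z$ as part of one side.

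This needs more care. The intended embedding puts $K(m_1,\dots,m_{d-1})$ into $d-1$ of the types, using $\gamma$-smallness of missing edges, as in Claim~\ref{g:lem-good-internal}. The vertices of $M_a\cup B$ are placed either at $W$-vertices adjacent to all chosen types, or inside the remaining class with internal edges. Each vertex of $Z$ supplies one vertex of $M_a\cup B$ joined to everything, and a matching edge needs two such vertices unless it uses an edge from $w$ into $U_{d,Y_d}$. Since $m_i\ge a+2$ and $v(M_a\cup B)=2a+5$, a vertex in $Z$ can cover one edge of $M_a$ together with a neighbor in $U_{d,Y_d}$. Therefore:
\begin{enumerate}[label=(\alph*)]
\item $a+1$ full vertices together with a fork inside some $U_{i,\varnothing}$ give $F$;
\item $a+2$ full vertices alone give $F$, with the last one centering a star-like fork via $U_{d,Y_d}$.
\end{enumerate}
So $|Z|\le a+1$, and if $|Z|=a+1$ the empty types carry no fork edges.

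\textbf{Weighing the two contributions.} A vertex $w$ adjacent to all positive-density types across all classes gains roughly $n$ edges. A vertex missing one entire class gains roughly $(1-1/d)n$, which is exactly what it would contribute as an ordinary member of that class. Any other pattern loses $\Omega(n)$. Comparing with $t_d(n)$ via $t_d(n)-t_d(n-|W|)\approx |W|(1-1/d)n$, each full vertex yields an excess of $n/d$. The internal edges yield at most $\frac32\cdot$(size of the empty types). Empty types exist only when fewer than $a+1$ vertices are full. The hypothesis $a+1>3d/2$ makes $(a+1)n/d$ beat $\frac32 n$, the maximal gain from empty types, which can total at most $n$. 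Mixed configurations are handled by a convexity or linear comparison.

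\textbf{Conclusion and equality case.} This gives $e(G)\le t_d(n)+\frac{a+1}{d}n+o(n)$. Equality forces exactly $a+1$ full vertices $C$, lying in every positive-density $Y$. It also forces every other $w\in W$ to follow the "one class missing" pattern.

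\textbf{Main obstacle.} The main obstacle is the embedding claim (a)/(b), which must be handled carefully because of the fork's structure. I expect to resolve it by placing the fork's center at a full vertex with its leaves chosen in some $U_{i,Y}$.
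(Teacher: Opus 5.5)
Your embedding (b) is sound, and it is exactly the $F$-freeness input the paper uses. For any choice of one positive-density type $U_{i,Y_i}$ per class, $|Y_1\cap\cdots\cap Y_d|\le a+1$: two intersection vertices carry the fork with leaves in one type, $a$ more carry the matching, and $K(m_1,\ldots,m_{d-1})$ is embedded greedily in the other types. Item (a) is moot, since Lemma~\ref{g:lem-profile-reduction}(i) already makes every $G[U_{i,\varnothing}]$ fork-free.

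Contrary to your assessment, this embedding is routine. The genuine gap is the ``weighing'' step, which must turn a \emph{per-tuple} constraint into a global edge bound. Your claim that every pattern other than ``full'' or ``one class missing'' loses $\Omega(n)$ is false. Take a vertex of $W$ adjacent to every positive-density type except one of relative size $\lambda\in(0,1)$ in $U_1$. It has about $(1-\lambda/d)n$ neighbours, a positive excess $(1-\lambda)n/d$ over the baseline $(1-1/d)n$. Since only each tuple's intersection is restricted, many such partial vertices can coexist. Likewise, once some $U_{i,\varnothing}$ has positive density no vertex is full, yet vertices still gain excess proportional to $1-z_i$, where $z_i=|U_{i,\varnothing}|/|U_i|$. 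So the real trade-off is $(a+1)\prod_i(1-z_i)$ against $\frac32\sum_iz_i$ (in units of $n/d$), not $(a+1)n/d$ against $\frac32n$. ``Convexity or linear comparison'' does not supply this.

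The missing idea, which is how the paper proceeds, is to average over tuples. Choose the $Y_i$ independently with $\mathbb P(Y_i=Y)=|U_{i,Y}|/|U_i|$ and set $p_i(w)=\mathbb P(w\in Y_i)$, so that $d_{G-W}(w)=\frac nd\sum_ip_i(w)+o(n)$. The union bound on the complementary events gives $\sum_ip_i(w)\le d-1+\mathbb P(w\in\bigcap_iY_i)$. The intersection is empty if some $Y_i=\varnothing$ and has at most $a+1$ elements otherwise. Summing over $w$ therefore gives $\sum_w\sum_ip_i(w)\le(d-1)|W|+(a+1)\prod_i(1-z_i)+o(1)$. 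Then $1-\prod_i(1-z_i)\ge\max_iz_i\ge\frac1d\sum_iz_i$ together with $a+1>3d/2$ yields $(a+1)\prod_i(1-z_i)+\frac32\sum_iz_i\le a+1$, which absorbs the internal edges.

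The equality case is also only asserted, and it needs more than counting. Tightness of the averaged bound tells you three things. First, $z_i=0$ for all $i$. Second, every tuple of positive-density types has an intersection of size exactly $a+1$. Third, by tightness of the union bound plus independence, each $w\in W$ has $p_j(w)=1$ for all but at most one $j$. By itself this does not give a common $C$. For $d=2$, split $U_1$ into two equal types $A,A'$. Let $a+1$ vertices of $W$ see exactly $A\cup U_2$ and another $a+1$ see exactly $A'\cup U_2$. Every tuple then has an $(a+1)$-element intersection and the bound is attained, yet no vertex lies in every type.

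This configuration does contain $F$, but you must exhibit the copy. Suppose two tuples differing only in coordinate $i$ had intersections $Z\ne Z'$. Then the at least $a+2$ vertices of $Z\cup Z'$ are all complete to the common types $U_{j,Y_j}$, $j\ne i$. Put the fork on two vertices of $Z$ with leaves in $U_{i,Y_i}$. Put the matching on the other $a-1$ vertices of $Z$ (partners in $U_{i,Y_i}$) and on one vertex of $Z'\setminus Z$ (partner in $U_{i,Y_i'}$). Finally embed $K(m_1,\ldots,m_{d-1})$ greedily in the types $U_{j,Y_j}$.

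Any two tuples are linked by single-coordinate changes, so all intersections coincide with one set $C$ with $|C|=a+1$, contained in every positive-density type. A vertex $w\notin C$ then has exactly one class $i$ with $p_i(w)<1$. If $p_i(w)>0$, some tuple would have an intersection containing $C\cup\{w\}$, which is impossible. Hence $p_i(w)=0$, which is the second assertion. The paper's own write-up also passes from ``$|\bigcap_iY_i|=a+1$'' to a common $C$ with only ``Hence'', so this step is worth writing out in any case.
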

	\begin{proof}
		For every $i\in[d]$, choose a set $Y_i\subseteq W$ randomly according
		to the distribution induced by the  classes of $U_i$, that is,
		$
		\mathbb P(Y_i=Y)=\frac{|U_{i,Y}|}{|U_i|}.
		$
		Since $|W|=O_F(1)$, the number of possible subset $Y$ is bounded by a
		constant. For $w\in W$, define
		$p_i(w):=\mathbb P(w\in Y_i).$
		
		We first estimate the contribution of the vertices in $W$. By the
		definition of $p_i(w)$, the expected number of principal classes in
		which $w$ has a positive-density neighbourhood is
		$\sum_{i=1}^{d}p_i(w).$
		Thus
		$
		\sum_{i=1}^{d}p_i(w)
		\leq
		d-1+\prod_{i=1}^{d}p_i(w).
		$
		Indeed, if $w$ does not belong to 
		$\bigcap_{i=1}^{d}Y_i$, then it can occur in at most $d-1$ of the
		sets $Y_i$, while the remaining contribution is exactly the probability
		that $w$ belongs to every $Y_i$.
		Summing  over all $w\in W$ gives
		\begin{align}\label{g:2.12}
			\sum_{i=1}^{d}\mathbb E|Y_i|=	\sum_{w\in W}\sum_{i=1}^{d}p_i(w)
			\leq
			(d-1)|W|
			+
			\mathbb E\left|\bigcap_{i=1}^{d}Y_i\right|.
		\end{align}

		We next bound the last term in \eqref{g:2.12}. Every
		positive-density type is determined by one of the sets $Y_i$. If for
		some choice of positive-density types we have
		$	\left|\bigcap_{i=1}^{d}Y_i\right|\geq a+2,$
		then choosing $a+2$ vertices from this intersection and using the
		corresponding positive-density type  would yield a copy of
		$M_a\cup B$. Indeed, two vertices from the intersection together with
		an internal edge in one positive-density type form the fork,
		while $a$ further vertices provide the matching. Since $U_i$ have linear size and  every vertex of  $ V(G)\setminus W$ have only $o(n)$
		missing edges in every other  class, the remaining
		vertices of
		$
		K(m_1,\ldots,m_{d-1})
		$
		can be greedily embedded by Lemma \ref{g:lem-good-internal}. This produces a copy of
		$
		F=(M_a\cup B)\vee K(m_1,\ldots,m_{d-1}),
		$
		a contradiction. Hence
		$
		\left|\bigcap_{i=1}^{d}Y_i\right|\leq a+1,
		$.

		Let
		$
		z_i:=\mathbb P(Y_i=\emptyset)
		$
		be the probability of the empty set in $U_i$. If one of the
		selected sets $Y_i$ is empty, then $
		\bigcap_{i=1}^{d}Y_i=\emptyset
		$.
		Otherwise, 	$
		\left|\bigcap_{i=1}^{d}Y_i\right|\leq a+1
		$ by the above
		argument. Hence
		\[
		\mathbb E\left|\bigcap_{i=1}^{d}Y_i\right|
		\leq
		(a+1)\prod_{i=1}^{d}(1-z_i).\] 
		Combining the  inequality \eqref{g:2.12} , we obtain
		$
		\sum_{i=1}^{d}\mathbb E|Y_i|
		\leq
		(d-1)|W|+(a+1)\prod_{i=1}^{d}(1-z_i) .
		$

		We now estimate the number of edges of $G$. Write
		\[
		e(G)=e(U_1,\ldots,U_d)+e(W,U_1\cup\cdots\cup U_d)
		+\sum_{i=1}^{d}e(U_i).
		\]
		By Lemma~\ref{g:lem-profile-reduction} (iii), every vertex of  $ V(G)\setminus W$
		has $o(n)$ missing edges in each other principal class. Hence
		$$
		e(U_1,\ldots,U_d)=t_d(n-|W|)+o(n)=
		t_d(n)-\frac{d-1}{d}|W|n+o(n).
		$$
		For the second term, 
		\[
		e(W,U_1\cup\cdots\cup U_d)
		=
		\frac nd\sum_{w\in W}\sum_{i=1}^{d}p_i(w)+o(n)
		\leq \frac nd\left((d-1)|W|+(a+1)\prod_{i=1}^{d}(1-z_i)+o(1)\right).
		\]
		It remains to estimate the internal edges. By
		Lemma~\ref{g:lem-profile-reduction} (iv), 
		\[
		\sum_{i=1}^{d}e(U_i)
		\leq
		\frac32\sum_{i=1}^{d}|U_{i,\emptyset}|+o(n)
		=
		\frac32\left(\sum_{i=1}^{d}z_i\right)\frac nd+o(n).
		\]	On the other hand, 
		$1-\prod_{i=1}^{d}(1-z_i)
		\geq
		\max_{i\in[d]} z_i
		\geq
		\frac1d\sum_{i=1}^{d}z_i $. Since 
		$
		a+1>\frac{3d}{2},
		$
		we obtain
		\begin{align}\label{g:2.17}
			(a+1)\prod_{i=1}^{d}(1-z_i)
			+\frac32\sum_{i=1}^{d}z_i
			\le
			a+1 .
		\end{align} 	
		Then
		\[
		e(W,U_1\cup\cdots\cup U_d)+\sum_{i=1}^{d}e(U_i)
		\leq \frac{d-1}{d}|W|n+
		\frac{a+1}{d}n+o(n).
		\]
		That means the possible contribution of the empty $Y_i$'s is already
		absorbed by the loss of the contribution from $W$.	
		Combining the above estimates yields
		$
		e(G)
		\leq
		t_d(n)+\frac{a+1}{d}n+o(n).
		$

		Finally, we consider the equality case. Suppose that
		$
		e(G)=t_d(n)+\frac{a+1}{d}n+o(n).
		$
		Then all inequalities above must be asymptotically tight. In particular,
		equality must hold in \eqref{g:2.17}, which is possible only if $\sum_{i}^dz_i=0$ and $
		|\bigcap_{i=1}^{d}Y_i|=a+1
		$. Hence there exists a set
		$
		C\subseteq W$ with $|C|=a+1,
		$
		contained in every positive-density type.	
		Therefore, for every positive-density type $U_{i,Y}$ we have
		$
		C\subseteq Y.
		$
		By the definition of $U_{i,Y}$, every vertex contained in a
		positive-density sest is adjacent to every vertex of $C$.
	\end{proof}
	
	We now fix the core $C$ obtained from the equality case and analyze the remaining vertices  $V(G)\setminus C$.
	\begin{lem}	\label{g:lem:exact-charging} 	Let $G$ be a $F$-free graph with  cleaned partition  $V(G)=W\cup U_1\cup\cdots\cup U_d$.
		Suppose that there exists a set
		$
		C\subseteq W$ with $ |C|=a+1$
		such that every vertex in $V(G)\setminus W$ is adjacent to all but
		$O_F(1)$ vertices of $C$. Let $R_i$ be the collection of  positive-density profile complete
		to $C$.
		Define
		\[
		Z_i:=\{v\in U_i:N_G(v)\cap C=\emptyset\}, \quad  i\in[d].
		\]
		Then the following results hold:
		\begin{enumerate}[label=(\roman*)]
			\item every edge of $G[U_i]$ has both endpoints in $Z_i$;
			
			\item if $R_i\neq \emptyset$, then the graph $G[Z_i]$ is fork-free for every $i\in[d]$.
		\end{enumerate}
	\end{lem}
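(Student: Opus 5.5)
The plan is to argue by contradiction in both parts. In each case we exhibit a copy of $M_a\cup B$ inside $W\cup U_i$ in which every vertex has a linear-size common neighbourhood in each other principal class. We then complete it to a copy of $F=(M_a\cup B)\vee K(m_1,\ldots,m_{d-1})$ by greedy embedding, exactly as in the proofs of Claim~\ref{g:lem-good-internal} and Lemma~\ref{g:lem-profile-reduction}(iii). The vertices of $C$ serve as cheap ``universal'' vertices. Every vertex of $R_i$ is adjacent to all of $C$, so each $c\in C$ together with a vertex of $R_i$ gives a matching edge. Also, $C$ is contained in every positive-density type in every class $U_j$, so each vertex of $C$ is adjacent to all but $o(n)$ vertices of each $U_j$.

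For (i), suppose $xy\in E(G[U_i])$ and, say, $x\notin Z_i$, so $x$ has a neighbour $c\in C$. We need $R_i\neq\emptyset$ here. If $R_i$ were empty, the positive-density types of $U_i$ would not be complete to $C$, which is incompatible with the hypothesis that vertices outside $W$ see all but $O_F(1)$ vertices of $C$ (and with the equality structure of Lemma~\ref{g:lem-profile}). So $R_i$ has linear size. Choose distinct $u,u'\in R_i\setminus\{x,y\}$. The edges $cx,cu,cu',xy$ form a fork with centre $c$ and $x$ as the degree-two vertex. Next, pair the remaining $a$ vertices of $C\setminus\{c\}$ with $a$ further distinct vertices of $R_i$. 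This gives $a$ disjoint edges, hence a copy of $M_a\cup B$ on $2a+5$ vertices. By symmetry the same argument applies if $y\notin Z_i$, so both endpoints must lie in $Z_i$.

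For (ii), assume $R_i\neq\emptyset$ and that $G[Z_i]$ contains a fork $B_0$. Take all $a+1$ vertices of $C$ (only $a$ are needed) and match $a$ of them to distinct vertices of $R_i\setminus V(B_0)$, using that $R_i$ is complete to $C$. Together with $B_0$ this is again a copy of $M_a\cup B$.

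The remaining step, common to both parts, is the embedding of $K(m_1,\ldots,m_{d-1})$, and this is the main obstacle. We choose one principal class $U_j$ ($j\ne i$) for each colour class $V_\ell$ and select the $m_\ell$ vertices one at a time. Each selected vertex must avoid non-neighbours of the at most $v(F)$ vertices already placed. A vertex outside $W$ (those in $Z_i$, $R_i$ and $x,y$) excludes at most $\gamma n$ vertices of $U_j$ by the cleaning property Definition~\ref{g:def:cleaned}(ii). A vertex of $C$ excludes only the negligible types of $U_j$ together with the types not containing it. The latter set is empty because $C\subseteq Y$ for every positive-density type, so the excluded set has size $o(n)$. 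Since $|U_j|=n/d+o(n)$ and $\gamma<1/(3d\,v(F))$, at every step at least $n/(2d)$ admissible vertices remain. This yields a copy of $F$ in $G$, contradicting $F$-freeness. The delicate point is to check that the $C$-vertices' neighbourhoods in every $U_j$ really are $U_j$ minus $o(n)$ vertices, and not merely of positive density; this is where the equality structure from Lemma~\ref{g:lem-profile} is essential.
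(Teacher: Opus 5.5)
Your proposal is correct and follows essentially the same route as the paper. In (i) you take a fork centred at a neighbour $c\in C$ of $x$ plus an $a$-matching from $C\setminus\{c\}$ into $U_i$; in (ii) you take the fork in $Z_i$ plus an $a$-matching from $C$ into $R_i$; and in both cases you complete $K(m_1,\ldots,m_{d-1})$ greedily through the other principal classes.

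One small correction: since $|C|=a+1$, the stated hypothesis that vertices outside $W$ miss only $O_F(1)$ vertices of $C$ is vacuous, so appealing to it for $R_i\neq\emptyset$ gives nothing. Both $R_i\neq\emptyset$ and the near-completeness of each $c\in C$ to every $U_j$ must rest entirely on the equality structure of Lemma~\ref{g:lem-profile} (every positive-density type contains $C$). You do invoke that structure, and the paper's proof also relies on it implicitly.
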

	\begin{proof}
		(i) Suppose otherwise. Then there exists an edge
		$xy\in E(G[U_i])$
		such that
		$	x\notin Z_i.	$
		Therefore, $x$ has a neighbor in $C$. Let
		$c\in C	$	be such that
		$	cx\in E(G).$
		Since  every vertex in $U_i$ has only
		$O(1)$ missing neighbors in $C$. Therefore, for sufficiently large
		$n$, we can choose two vertices
		$	y_1,y_2\in U_i
		$ such that $cy_1,cy_2\in E(G)$
		and all required edges for the fork can be found.
		
		Together with the edge $xy$, the vertex $c$ creates a copy of the
		fork $B$. Indeed, $c$ plays the role of the center of the fork,
		while $x,y_1,y_2$ provide the three neighbors and $xy$ gives the
		extra edge.
		Now, since	$|C|=a+1,
		$
		we can choose $a$ additional vertices from $C$ which are disjoint
		from the vertices already used in the fork. Using these vertices as
		one endpoint of a matching and using suitable vertices from $U_i$ for the other endpoints, we obtain a copy of
		$
		M_a\cup B.$

		Finally, 
		because different principal classes are almost completely	joined in a cleaned graph.	
		Fixing the vertices already selected for
		$M_a\cup B$. By  Lemma \ref{g:lem-good-internal},  we can  extend this copy by choosing
		vertices from 	$U_1\cup\cdots\cup U_d$ to obtain
		$
		F=(M_a\cup B)\vee K(m_1,\ldots,m_{d-1}),
		$
		contradicting that $G$ is $F$-free.	
		Therefore every internal edge of $G[U_i]$ has both endpoints in
		$Z_i$.

		(ii)	
		If $G[Z_i]$ contained a fork, choose $a$ vertices of $C$ and pair them with
		$a$ fresh vertices of $R_i$ to form $M_a$, and again extend greedily through
		the other principal classes. This also gives $F$, so $G[Z_i]$ is fork-free.	
		
	\end{proof}
	
	After applying Lemma~\ref{g:lem:exact-charging}, each principal class
	contains no vertices that miss the core and hence has the canonical
	join structure. We next analyze the possible decomposition of the
	principal classes into symmetric blocks.
	
	\begin{lem}
		\label{g:lem-beta}
		Let $U_i$ be a cleaned principal class and let
		$Q_1,\ldots,Q_t$ be pairwise disjoint, mutually isomorphic connected
		blocks 
		in $G[U_i]$, where $|V(Q_s)|=O_F(1)$ and
		$t=\Omega_F(n)$.  
		Suppose that all its blocks have the same adjacency pattern to the bounded $W$.  If
		$|\alpha(Q)|\le C_0|V(Q)|$, then
		$|\beta(Q)|\le |\alpha(Q)|.$
	\end{lem}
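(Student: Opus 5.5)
This is a symmetrization (Zykov-type) argument: many isomorphic blocks with identical attachment to $W$ can be interchanged, so an extremal graph cannot afford to have blocks whose missing edges outnumber their internal edges. Throughout, $G$ is an extremal $F$-free graph with a cleaned partition, and $Q$ denotes a typical block $Q_s$. Recall that $\alpha(Q)$ is the set of internal edges at vertices of $Q$, and $\beta(Q)$ is the set of missing cross edges at vertices of $Q$. Since $|V(Q)|=O_F(1)$ and $t=\Omega_F(n)$, these quantities are comparable across blocks.

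\emph{Step 1: locate a good block.} Assume, for contradiction, that $|\beta(Q_s)|>|\alpha(Q_s)|$ for a typical block. By Theorem~\ref{g:lem-high-bounded}(iii), $M=O_F(n)$, and the blocks are disjoint, so averaging over the $t=\Omega_F(n)$ blocks gives $\sum_s|\beta(Q_s)|=O_F(n)$. Because $t$ is linear in $n$, some block $Q^{*}$ satisfies $|\beta(Q^{*})|=O_F(1)$. The same averaging applied to $|\alpha|$, together with the hypothesis $|\alpha(Q)|\le C_0|V(Q)|$, shows that $Q^{*}$ also has $|\alpha(Q^{*})|=O_F(1)$. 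I will choose $Q^{*}$ to minimize $|\beta(Q)|-|\alpha(Q)|$ among the blocks.

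\emph{Step 2: symmetrize.} Take a block $Q$ with $|\beta(Q)|-|\alpha(Q)|$ maximal, assumed positive. Replace $Q$ by a clone of $Q^{*}$ on the same vertex set. The clone copies the adjacencies of $Q^{*}$ to $W$, which equal those of $Q$ because all blocks have the same pattern to $W$. It copies the adjacencies of $Q^{*}$ to the other principal classes, and it carries the internal structure of $Q^{*}$. The clone is not adjacent to $Q^{*}$ itself, so no spurious edges appear between twins. The net edge change is
\[
\bigl(|\beta(Q)|-|\alpha(Q)|\bigr)-\bigl(|\beta(Q^{*})|-|\alpha(Q^{*})|\bigr)+O_F(1)\cdot(\text{edges between }Q\text{ and }Q^{*}).
\]
The correction term vanishes if $Q$ and $Q^{*}$ are chosen non-adjacent, which is possible since only $O_F(1)$ blocks meet $N(Q^{*})$ inside $U_i$. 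If the two blocks have different values of $|\beta|-|\alpha|$, the count strictly increases. If all blocks have equal values of $|\beta|-|\alpha|$, I instead compare against the canonical structure, in which every vertex of $U_i$ is complete to the other classes. In that case $\sum_s(|\beta|-|\alpha|)>0$ contradicts the lower bound \eqref{g:eq-IminusM}, $I-M\ge\frac{a+1}{d}n+O_F(1)$, once combined with Lemma~\ref{g:lem-profile-reduction}(iv).

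\emph{Step 3: preserve $F$-freeness.} The main obstacle is showing that the modified graph is still $F$-free. A copy of $F$ created by the swap must use some cloned vertex. Each cloned vertex is a twin of a vertex of $Q^{*}$, apart from adjacency inside the block structure. Since $F$ has bounded order and there are $t=\Omega_F(n)$ identical clones available, the standard Zykov argument applies: a copy using the clones can be rerouted through $Q^{*}$, or through other untouched copies of $Q^{*}$'s type, to give a copy already in $G$. Making this precise requires the blocks to be whole connected components of $G[U_i]$, so that the internal edges of $F$ map within blocks. The vanishing of the correction term in Step 2 requires the same. I expect this to be the delicate point, and I would handle it by first reducing, via Lemma~\ref{g:lem:exact-charging}(i), to the case where the blocks are components of $G[Z_i]$.

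Combining the steps: the swap yields an $F$-free graph with more edges, contradicting extremality. Hence $|\beta(Q)|\le|\alpha(Q)|$.
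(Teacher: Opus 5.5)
\paragraph{Gap: symmetrizing blocks against each other cannot settle the sign.} Cloning the minimizer $Q^*$ onto the maximizer $Q$ changes $e(G)$ by $\bigl(|\beta(Q)|-|\alpha(Q)|\bigr)-\bigl(|\beta(Q^*)|-|\alpha(Q^*)|\bigr)$. So even if $F$-freeness held, this only proves that $|\beta|-|\alpha|$ takes the same value on all blocks. It says nothing about the sign of that common value; averaging with $M=O_F(n)$ only shows it is $O_F(1)$.

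Your fallback for the equal case is not an argument. Inequality \eqref{g:eq-IminusM} is a global lower bound on $I-M$ for the max-cut partition, and there the vertices of $W$ dominate $I$: each of the $a+1$ core vertices alone has about $n/d$ internal edges. Lemma~\ref{g:lem-profile-reduction}(iv) only bounds internal edges away from $W$. Nothing you cite controls the contribution of $W$ to $I-M$ sharply enough, that is, with error below $t$. Hence $\sum_s(|\beta(Q_s)|-|\alpha(Q_s)|)\ge t$ gives no contradiction without redoing the equality analysis of Lemma~\ref{g:lem-profile} with the missing edges kept.

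\paragraph{Gap: Step~3 and the correction term.} Your $F$-freeness argument and the vanishing of the correction term need the blocks to be components of $G[U_i]$. The lemma does not assume this. Lemma~\ref{g:lem:exact-charging}(i) cannot supply it: it presupposes a core $C$, which is not available here, and placing internal edges inside $Z_i$ does not make the given blocks components. Also, the maximizer and the minimizer are specific blocks, so you cannot choose them to be nonadjacent.

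\paragraph{The missing idea.} Compare $Q$ with the ideal configuration directly, not with another block. Form $G_Q$ from $G$ by deleting every edge of $\alpha(Q)$ and adding every pair of $\beta(Q)$. Then
\[
e(G_Q)-e(G)=|\beta(Q)|-|\alpha(Q)|
\]
exactly, so extremality gives the claim as soon as $G_Q$ is $F$-free. There is no equal-values case and no global counting. In fact the hypothesis $|\alpha(Q)|\le C_0|V(Q)|$ is not even needed.

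To prove $F$-freeness, note that in $G_Q$ the vertices of $Q$ have no neighbours in $U_i$. So in any copy $F^*\subseteq G_Q$, the set $V(F^*)\cap V(Q)$ is independent, and its $F^*$-neighbours $S$ lie in $W$ or in other principal classes. Each vertex of $S\setminus W$ has at most $\gamma n$ missing edges, and $t=\Omega_F(n)$ with $\gamma$ small. Hence some block $Q'$ disjoint from $F^*$ is complete to $S\setminus W$, and by the common $W$-pattern it is adjacent, through the isomorphism, to the vertices of $S\cap W$ that $Q$ uses. Moving $V(F^*)\cap V(Q)$ into $Q'$ along the isomorphism needs no edges inside $Q'$. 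It therefore produces a copy of $F$ in $G$, a contradiction. This argument needs neither the blocks to be components nor any comparison with a minimizing block.
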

	
	\begin{proof}
		Let \(Q\) be a fixed block. We construct a new graph \(G_Q\) by
		deleting all edges in \(\alpha(Q)\) and adding all missing pairs in
		\(\beta(Q)\). Hence	
		$e(G_Q)-e(G)=|\beta(Q)|-|\alpha(Q)|.
		$ 	Since \(G\) is extremal, if \(G_Q\) is  \(F\)-free, then
		\(e(G_Q)\le e(G)\) and 
		\(|\beta(Q)|\le|\alpha(Q)|\).
		
		We next prove that \(G_Q\) is still \(F\)-free. Suppose otherwise,
		and let \(F^*\subseteq G_Q\) be a copy of \(F\). Since the only new
		edges of \(G_Q\) are the pairs in \(\beta(Q)\), every edge of
		\(F^*\) which is not an edge of \(G\) must be incident with \(Q\) and
		must join \(Q\) to another principal class. Moreover, since all edges
		inside \(Q\) belonging to \(\alpha(Q)\) were deleted, the induced graph
		\(F^*[Q]\) contains no such edge.
		
		Let \(S\) be the set of vertices of \(F^*-Q\) adjacent in \(F^*\) to
		vertices of \(Q\). Since \(F\) is fixed, we have
		\(|S|\le |V(F)|\). We replace the copy of \(Q\) by another block of the
		same block family.
		For every vertex \(s\in S\setminus W\), the cleaning
		property of \(G\) implies that \(s\) has only \(o(n)\) missings
		in other principal class. Therefore, among the \(t=\Omega_F(n)\) blocks, only
		\(o(n)\) blocks fail to be complete to \(s\). Since \(S\) has bounded
		size, the number of blocks which fail to be complete to some vertex of
		\(S\) is still \(o(n)\). Hence there exists a block \(Q'\) in the same
		family for \(Q\) such that every vertex of \(S\setminus W\) is adjacent to
		all vertices of \(Q'\).
		Since every block has the same
		adjacency pattern to the bounded $W$, \(Q'\) has
		the same adjacency to the vertices of \(S\setminus W\) as \(Q\).
		
		Now map \(F^*\cap Q\) to \(Q'\) through the fixed isomorphism between
		\(Q\) and \(Q'\). The internal edges required inside this part are
		present because the two blocks are isomorphic. The edges between this
		part and \(S\) are also present. Finally, no edge inside \(Q\) is required to be restored,
		because all edges of \(\alpha(Q)\) were removed from \(G_Q\).
		Consequently the obtained copy is also contained in \(G\), yielding
		\(F\subseteq G\), a contradiction. Therefore \(G_Q\) is \(F\)-free, completing the proof.
	\end{proof}

	\subsection{Exact determination of the extremal graph}
	\begin{proof}[Proof of Theorem \ref{g:thm1}]
		Throughout the proof, we use the condition
		\(a+1> \frac{3d}{2}\), which is required in the equality case of
		Lemma~\ref{g:lem-profile}.	Choose a  optimal maximum \(d\)-cut of \(G\), and
		denote the corresponding principal classes by
		\(V_1,\ldots,V_d\). 
		By Theorem~\ref{g:lem-high-bounded}  we further obtain  \(I=\sum_{i\in[d]}e(G[V_i])=O_F(n)\) and 
		\(\sum_{i<j}(|V_i||V_j|-e(V_i,V_j))=O_F(n)\). Hence Lemma~\ref{g:lem-linear-cleaning} gives a bounded	exceptional set \(W\) such that
		$G$ is a cleaned $F$-free graph.

		For every $i\in[d]$, let $U_i=V_i\setminus W$.	 For every \(Y\subseteq W\), let
		\(U_{i,Y}=\{u\in U_i:N_G(u)\cap W=Y\}\). Apply Lemma~\ref{g:lem-profile-reduction} to the partition	\(V(G)=W\cup U_1\cup\cdots\cup U_d\). And
		by Lemma~\ref{g:lem-profile}, we have
		\(e(G)\leq t_d(n)+\frac{a+1}{d}n+o(n)\).
		On the other hand, the extremal construction satisfies
		\(e(G)\ge e(H(n,a+1,d))=t_d(n)+\frac{a+1}{d}n+O_F(1)\). Thus, equality must hold asymptotically in every estimate used in
		Lemma~\ref{g:lem-profile}. Hence its  gives a set
		\(C\subseteq W\) with \(|C|=a+1\) such that every positive-density
		type \(U_{i,Y}\) satisfies \(C\subseteq Y\).
		And
		every vertex
		\(w\in W'=W-C\) is absent from exactly one principal class. We move \(w\) into its unique missing class. 
		This operation does not change the asymptotic number of
		edges, since the number of \(W'\) is
		\(O(1)\). After absorbing all vertices of \(W'\), we may assume that
		the exceptional set is exactly \(C\). The absorption changes only \(O_F(1)\)
		vertices and therefore does not affect any positive-density type.

		For each \(i\in[d]\), define
		\[ Z_i:=\{v\in U_i:N_G(v)\cap C=\emptyset\},
		\quad Q_i:=U_i\setminus Z_i. \]
		Thus \(Q_i\) consists of all vertices in the \(i\)-th principal class
		which have at least one neighbour in  \(C\).
		By Lemma~\ref{g:lem:exact-charging} (i),  all internal edges are contained in
		\(G[Z_i]\) for some \(i\).
		Thus  \(G[Q_i]\) is independent for every \(i\in[d]\), and  then \(Q_i\) has only one positive-density type, namly  $R_i$.	Consider the  blocks	\(\{\{v\}:v\in R_i\}\). For every such block \(Q\), we
		have \(\alpha(Q)=0\). On the other hand, these blocks satisfy the assumptions of
		Lemma~\ref{g:lem-beta}, and therefore 
		\(\beta(Q)=0\). Hence no missing edge exists between two distinct
		sets \(R_i\) and \(R_j\). Consequently, the sets
		\(R_1,\ldots,R_d\) are pairwise complete.

		It remains to consider \(Q_i\setminus R_i\) and \(Z_i\).  
		Since each \(G[Z_i]\) is fork-free by
		Lemma~\ref{g:lem-profile-reduction} (ii), the number of internal edges	satisfies \(I\leq\frac32\sum_{i\in[d]}|Z_i|\). Let \(D\) be the number of
		missing edges compared with the canonical construction
		\(G_{\rm can}=K_{a+1}\join T_d(n-a-1)\). Every vertex in \(Z_i\) is
		anticomplete to the core \(C\), and hence contributes at least
		\(a+1\) missing canonical edges. If \(D_0\) denotes the number of
		missing canonical edges not incident with any \(Z_i\), then
		\(D\geq(a+1)\sum_{i\in[d]}|Z_i|+D_0\).
		Consequently,
		\(e(G)\leq e(G_{\rm can})+I-D\), and hence
		\[
		e(G)\leq
		e(G_{\rm can})-|Q_i\setminus R_i|
		-\left(a-\frac12\right)\sum_{i\in[d]}|Z_i|-D_0 .
		\]
		Because \(a\) is a positive integer,
		\(a-\frac12>0\). Therefore the extremality of \(G\) forces
		\(Q_i\setminus R_i=\emptyset, Z_i=\emptyset\) for every \(i\) and \(D_0=0\).
		
		Thus every vertex  \(V(G)\setminus C\) is complete to the core \(C\), the
		principal classes are pairwise complete, and there are no internal
		edges inside the principal classes. Hence \(G-C\) is a complete
		\(d\)-partite graph. Finally, a complete \(d\)-partite graph with fixed
		order has maximum number of edges only when the sizes of the parts
		differ by at most one. Therefore the principal classes are balanced,
		and we obtain
		\[
		G\cong K_{a+1}\join T_d(n-a-1)=H(n,a+1,d).
		\]
		This completes the proof.
	\end{proof}

	\section{Graph supersaturation}\label{sec:graph-supersaturation}

	\begin{lem}\label{g:lem1}
		Let $F$ be a non-bipartite graph.
		If its extremal graph is unique for all sufficiently large $n$,
		then $F$ is stable.
	\end{lem}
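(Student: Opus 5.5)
Let $J_n$ denote the unique extremal graph for $F$, and set $\chi(F)=d+1\ge 3$. The plan is to show directly that every $F$-free sequence $H_n$ with $e(H_n)=\ex(n,F)-o(n^2)$ can be turned into $J_n$ by changing $o(n^2)$ edges. The bridge between $H_n$ and $J_n$ is the Tur\'an graph $T_d(n)$. Uniqueness of $J_n$ is simply the first half of the definition of stability, so nothing needs to be proved there.

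\textbf{Step 1.} By the Erd\H{o}s--Stone--Simonovits theorem, $\ex(n,F)=t_d(n)+o(n^2)$. Hence $e(H_n)=(1-1/d)\binom n2-o(n^2)$ and also $e(J_n)=(1-1/d)\binom n2-o(n^2)$, so both $H_n$ and $J_n$ satisfy the hypothesis of Theorem~\ref{g:thm-stable} for every fixed $\delta>0$ once $n$ is large.

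\textbf{Step 2.} Fix $\varepsilon>0$ and apply Theorem~\ref{g:thm-stable} to $H_n$. It yields a partition $V_1\cup\cdots\cup V_d$ with $\bigl||V_p|-n/d\bigr|<\varepsilon n$. It also shows that $H_n$ differs from the complete $d$-partite graph $K(V_1,\ldots,V_d)$ by at most $\varepsilon n^2$ omitted cross pairs and at most $d\varepsilon n^2$ internal edges. Because the part sizes are within $\varepsilon n$ of $n/d$, moving $O(\varepsilon n)$ vertices balances the parts. Each moved vertex changes at most $n$ adjacencies, so $K(V_1,\ldots,V_d)$ can be relabeled to $T_d(n)$ by changing $O(\varepsilon n^2)$ edges. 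In total, $H_n$ is within $O_d(\varepsilon n^2)$ edge changes of $T_d(n)$ after relabeling.

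\textbf{Step 3.} Apply the same argument to $J_n$, which is $F$-free with $\ex(n,F)$ edges. This puts $J_n$ within $O_d(\varepsilon n^2)$ edge changes of $T_d(n)$ after relabeling.

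\textbf{Step 4.} Compose the two relabelings, using the triangle inequality for the edit distance. Then $H_n$ and $J_n$ differ in $O_d(\varepsilon n^2)$ edges after relabeling. Since $\varepsilon$ was arbitrary, a standard diagonal choice of $\varepsilon=\varepsilon(n)\to0$ gives the bound $o(n^2)$.

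The only point needing care is this diagonalization. Theorem~\ref{g:thm-stable} is stated for fixed $\varepsilon$ with a threshold $n_0(\varepsilon)$. One therefore takes $\varepsilon(n)$ decreasing slowly enough that $n\ge n_0(\varepsilon(n))$, and also that the $o(n^2)$ deficit of $H_n$ stays below $\delta(\varepsilon(n))\binom n2$.

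No real obstacle is expected. The hypothesis ``non-bipartite'' is used only to guarantee $d\ge2$, so that the stability theorem says something nontrivial about $d$-partite structure.
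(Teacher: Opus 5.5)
Your proposal is correct and follows the paper's argument. The paper likewise applies the Erd\H{o}s--Simonovits stability theorem separately to the near-extremal sequence $H_n$ and to the unique extremal graph $J_n$, places both within $o(n^2)$ edge changes of $T_d(n)$, and concludes by transitivity after relabeling. You make explicit the details the paper leaves implicit (the edge-count hypothesis via $\ex(n,F)=t_d(n)+o(n^2)$, rebalancing the parts at cost $O(\varepsilon n^2)$, and letting $\varepsilon\to0$), and you handle each of them correctly.
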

	
	\begin{proof}
		Write $\chi(F)=d+1$.
		The Erd\H{o}s--Simonovits stability theorem~\cite{g:Simonovits3}
		implies that every near-extremal $F$-free graph differs from
		$T_d(n)$ in $o(n^2)$ edges after relabeling.
		The same holds for the unique extremal $F$-free graph.
		Thus these two graphs differ in $o(n^2)$ edges after relabeling.
	\end{proof}

	\begin{thm}
		For every integer $d\ge2$ and every integer $a> 3d/2-1$. 	Let $m_1, \dots, m_{d-1}$ be positive integers such that $m_i \ge a+2$ for each $i \in [d-1]$. Define $F = (M_a \cup B) \vee K(m_1, \dots, m_{d-1})$ where $B$ is a fork. Let  $\theta:=(a+5)/(a+3)$ and
		$\eta:=(a+5)/(a+4)$.
		Then there exists a  constant $\varepsilon>0$  such that, for all sufficiently
		large $n$, 
		
		{\rm (i) } if $1\le q\le \varepsilon n^{\eta}$, then $h_F(n,q) <  c(n,F)$, and 
		
		{\rm (ii)}  if $1\le q\le \varepsilon n^{\theta}$, then $h_F(n,q) < q\cdot c(n,F)$.	
		
		\noindent In particular,
		
		{\rm (iii)} if $ q=o (n^{\eta})$, then $h_F(n,q)=o( c(n,F))$, and 
		
		{\rm (iv)}  if $q=o (n^{\theta})$, then $h_F(n,q)=o (q\cdot c(n,F))$.

	\end{thm}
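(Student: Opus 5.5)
The proof has three parts. First I determine the order of $c(n,F)$. Then I exhibit, for each $q$, an explicit $n$-vertex graph with $\ex(n,F)+q$ edges and few copies of $F$. Finally I compare the two counts and read off (i)--(iv).

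\emph{Order of $c(n,F)$.} By Theorem~\ref{g:thm1} the unique extremal graph is $H(n,a+1,d)=K_{a+1}\vee T_d(n-a-1)$. Write $C$ for its core and $V_1,\dots,V_d$ for its parts. Since $C$ is a clique complete to everything, the only non-edges lie inside some $V_i$, so an added edge is $xy$ with $x,y\in V_1$, say. Set $R=C\cup\{x\}$. Then $|R|=a+2$ and $(H+xy)-R$ is $d$-partite. By the argument of Lemma~\ref{g:lem-H-free}, every copy of $F$ in $H+xy$ uses $xy$ and all $a+1$ vertices of $C$. This pins down at least $a+3$ vertices of the copy, so the number of copies created is $O_F(n^{v(F)-a-3})$. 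For a matching lower bound I embed one explicit pattern with all other vertices free:
\begin{itemize}
\item a vertex $c\in C$ is the centre of the fork, with neighbours $x$ and two further vertices of $V_1$, and with $xy$ as the pendant edge;
\item the remaining $a$ vertices of $C$ are matched to fresh vertices of $V_1$;
\item the classes of $K(m_1,\ldots,m_{d-1})$ are placed in $V_2,\dots,V_d$.
\end{itemize}
This gives $\Omega_F(n^{v(F)-a-3})$ copies, uniformly over the choice of the added edge. Hence $c(n,F)=\Theta_F(n^{v(F)-a-3})$.

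\emph{Competing construction.} I use a graph whose bounded part has only $a$ vertices, compensating for the lost edges with internal edges:
\[
H_q=K_a\vee\bigl(T_d(n-a)\cup A\bigr),
\]
where $A$ is placed inside the parts of $T_d(n-a)$. Shrinking the core by one vertex costs about $n/d$ edges. I recover these, plus $q$ more, by letting $A$ be a disjoint union of copies of $K_4$ in every part, which is fork-free and supplies $\tfrac32(n-a)$ edges, together with an extra near-regular sparse graph of degree $r\asymp (q+n)/n$ inside one part. The $K_4$'s alone already give $\ex(n,F)+\Omega(n)$ edges, and the extra graph is tuned so that $e(H_q)=\ex(n,F)+q$ exactly.

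\emph{Counting copies in $H_q$.} Because the core now has only $a$ vertices, any copy of $F$ must use at least one internal edge structure that is not destroyed by $a$ deleted vertices. I split the copies according to how $V(F)\cap K_a$ is distributed among the matching, the fork centre and the remaining fork vertices. In each case I bound the number of embeddings of the leftover forest inside $A$:
\begin{itemize}
\item $K_4$-components contribute only bounded-size local structures;
\item the sparse part contributes at most $n\,r^{e}$ embeddings of a forest component with $e$ edges.
\end{itemize}
The free vertices of $K(m_1,\ldots,m_{d-1})$ contribute the common factor $n^{\sum m_i}$.

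Summing the cases and dividing by $c(n,F)=\Theta(n^{v(F)-a-3})$, I expect the dominant term to be $O_F(q^{a+4}/n^{a+5})$. This would give the first bound of Theorem~\ref{g:res-2}, since $h_F(n,q)\le N_F(H_q)$, and the second bound follows on dividing by $q$. Then (i) and (iii) follow when $q\le\varepsilon n^{\eta}$ or $q=o(n^{\eta})$ with $\varepsilon$ small, and (ii) and (iv) likewise with $\theta$. For small $q$ the dominant term may be of order $n^{-1}$ instead, which is still $o(1)$.

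\emph{Main obstacle.} I expect the hardest step to be the case analysis in the copy count. I must check that no cheap embedding exists, for example one where the $a$ core vertices cover the fork centre and a matching edge simultaneously so that only a few vertices need internal edges. I also need the degree parameter $r$ chosen so that the worst case really scales as $q^{a+4}$ rather than as a lower power of $q$ with a larger power of $n$. If the naive choice of $A$ gives the wrong exponent, I would first try concentrating the extra $q$ edges on a bipartite block of suitable size inside one part rather than spreading them regularly, and re-optimise.
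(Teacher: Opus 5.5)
Your estimate $c(n,F)=\Theta_F(n^{v(F)-a-3})$ is correct, and it is simpler than the paper's exact count. The gap is the competing construction. Any graph $K_a\vee(T_d(n-a)\cup A)$ in which $A$ has bounded degree contains far more than $c(n,F)$ copies of $F$. The cheap embedding you flagged as the main obstacle is real and cannot be excluded.

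Put the fork centre and one endpoint of each of $a-1$ matching edges on the $a$ core vertices. What remains of $M_a\cup B$ is two disjoint edges (the pendant fork edge and the last matching edge) plus $a+1$ isolated vertices. Put the two edges into $K_4$'s of $V_1$, which gives $\Theta(n^2)$ choices. Put the isolated vertices anywhere in $V_1$, and put $K(m_1,\dots,m_{d-1})$ in $V_2,\dots,V_d$. This yields $\Theta(n^{v(F)-a-2})=\Theta(n\,c(n,F))$ copies already for $q=1$.

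With the degree-$r$ layer the same placement gives $\Omega(n^{v(F)-a-2}r^2)$ copies, against $q\,c(n,F)=O(n^{v(F)-a-2}r)$. So $N_F(H_q)/(q\,c(n,F))$ never drops below a constant.

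Your fallback of redistributing the extra edges, for instance into a bipartite block, cannot help. Losing a core vertex costs about $n/d$ edges. If no vertex has linear internal degree, $\Omega(n)$ internal edges always contain $\Omega(n^2)$ pairs of disjoint edges inside one part. A vertex of linear internal degree essentially restores the $(a+1)$-st core vertex.

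The missing idea is to remove the core entirely rather than shrink it. The paper merges $R$ into $V_1$ and replaces $H[R\cup V_1]$ by vertex-disjoint cliques of size $s=\Theta(1+q/n)$. These carry $\binom{a+1}{2}+(a+1)p+q$ edges in total, and every edge from $R\cup V_1$ to $V_2\cup\dots\cup V_d$ is kept, so $e(H')=\ex(n,F)+q$.

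Now $M_a\cup B$ has $a+1$ components and no isolated vertex. When it lies in $R\cup V_1$, each component must sit inside a single clique. This leaves only $O(n^{a+1}s^{a+4})$ placements, that is, $O(n^{v(F)-a-4}s^{a+4})$ copies of $F$. Placements that push part of $M_a\cup B$ into $V_2\cup\dots\cup V_d$ force whole classes of $K(m_1,\dots,m_{d-1})$ into one clique, so they are of lower order. Hence $N_F(H')/c(n,F)=O(1/n+q^{a+4}/n^{a+5})$, which gives (i)--(iv).

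In short, the exponent that matters is the forest rank of the part of $F$ forced into the dense structure. It is $a+4$ in the paper's construction and $a+3$ pinned vertices for $c(n,F)$. In yours it is only $a+2$: the $a$ core vertices plus the leftover $M_2$.
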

	
	\begin{proof}Let $F_0 := M_a \cup B$, $M:=K(m_1, \dots, m_{d-1})$ and $S:=\sum_{j=1}^{d-1}m_j$.
		Note that $a+1 > 3d/2$. By Theorem \ref{g:thm1}, the unique extremal graph for $F$ is $H := H(n, a+1, d)$. Let $\{R, V_1, \dots, V_d\}$ be a partition of $V(H)$ such that $|R| = a+1$ and 
		\[
		p \leq |V_i| \leq p+1, \quad \text{where } p = \left\lfloor \frac{n-a-1}{d} \right\rfloor.
		\]
		Without loss of generality, assume $|V_i| = p$ for $i = 1, \dots, d-t$ and $|V_i| = p+1$ for $i = d-t+1, \dots, d$. 
		
		\noindent\textbf {1. Counting one-edge benchmark $c(n,F)$}

		For a part \(V_i\), let \(H_i\) be obtained from \(H\) by adding one
		edge inside \(V_i\).  For any copy $K$ of $F$ in $H_i$, let $F_0(K)$ be the induced subgraph isomorphic to $F_0$.

		\begin{claim}\label{g:clm-F0}
			If the added edge lies in \(V_i\), then every copy \(K\) of \(F\) in
			\(H_i\) satisfies $V(F_0(K)) \subseteq V_i \cup R$.
		\end{claim}
		
		\begin{proof}
			Suppose to the contrary that $V(F_0(K)) \cap V_j \neq \emptyset$ for some $j \neq i$.  
			Since $|R| = a+1 \leq m_\ell$ for every $\ell \in [d]$, each $m_\ell$-part of $K$ contains 
			at least one vertex outside $R$.  In particular, some vertex $v$ of $K$ lies in 
			$V_i \cup V_j$ and belongs to the multipartite part $K(m_1, \dots, m_{d-1})$.  
			By the definition of $F = F_0 \vee K(m_1, \dots, m_{d-1})$, this vertex $v$ must be 
			adjacent to all vertices of $F_0(K)$.  However, $F_0(K)$ has vertices in both $V_i$ and 
			$V_j$, while no vertex in $V_i \cup V_j$ can be adjacent to vertices in both parts 
			(in $H_i$, $V_i$ and $V_j$ are independent sets, and a vertex 
			in $V_i$ is not adjacent to other vertices in $V_i$ except the added edge).  
			This contradiction completes the proof.
		\end{proof}
		
		By Claim \ref{g:clm-F0}, $F_0(K)$ embeds into $V_i \cup R$. We count the copies of \(F\) in \(H_i\). Let $u_0$ be the vertex of degree 3 in $B$. Then $u_0 \in R$. Consider $M_{a+1} = M_a \cup \{u_0 x_1\}$, an $(a+1)$-matching in $V_i \cup R$ consisting of the added edge $uv$ inside $V_i$ and $a$ cross edges between $R$ and $V_i$. We embed $F_0$ as follows:
		\begin{itemize}
			\item Choose $a$ vertices from $R$ (leaving one for $u_0$) and $a$ vertices from $V_i \setminus \{u,v\}$, then match them: $\binom{a+1}{a}\binom{|V_i|-2}{a}a!$.
			\item Choose two additional vertices in $V_i$ as the other neighbors of $u_0$: $\binom{|V_i|-a-2}{2}$.
			\item Select which of the $a+1$ matching edges serves as the fork edge $x_1y$ and orient it: $2(a+1)$.
		\end{itemize}
		Thus the number of $F_0$-embeddings in $V_i \cup R$ is
		\[
		N_i = 2(a+1)^2 \binom{|V_i|-2}{a} a! \binom{|V_i|-a-2}{2}.
		\]
		The remaining parts of $K$ form a copy of $K(m_1, \dots, m_{d-1})$ in the other parts. Let
		\[
		M_i := \sum_{\substack{\pi: [d-1] \to [d]\setminus\{i\} \\ \text{bijection}}} \prod_{j=1}^{d-1} \binom{|V_{\pi(j)}|}{m_j}.
		\]
		Then $\N_F(H_i) = N_i \cdot M_i$.
		
		Let $H^1$ and $H^2$ be obtained from $H$ by adding an edge in $V_1$ or $V_{d-t+1}$, respectively. Now compare $H^1$ and $H^2$. With $|V_1| = p$ and $|V_{d-t+1}| = p+1$,
		\[
		\frac{\N_F(H^2)}{\N_F(H
			^1)} 
		= \frac{p-1}{p-a-1} \cdot \frac{p-a-1}{p-a-3} \cdot \frac{M_2}{M_1}.
		\]
		As $p \to \infty$, expanding $\binom{p}{m}$ and $\binom{p+1}{m}$ yields
		\[
		\frac{M_2}{M_1} = 1 - \frac{S}{(d-1)p} + O_F(p^{-2}).
		\]
		Hence
		\[
		\frac{\N_F(H^2)}{\N_F(H^1)} = 1 + \frac{1}{p}\left(a - 2 - \frac{S}{d-1}\right) + O_F(p^{-2}).
		\]
		Since $m_j \geq a+2$ for all $j$, we have $S \geq (d-1)(a+2)$, and hence
		$a - 2 - \frac{S}{d-1} \leq -4 < 0.$
		Therefore, for sufficiently large $p$, $\N_F(H^1) > \N_F(H^2)$.  Consequently, the minimum number of copies of $F$ in an $n$-vertex graph with $\operatorname{ex}(n,F)+1$ edges is attained by $H^2$, and
		\[
		c(n,F) = \N_F(H^2) = 2(a+1)^2 a! \binom{p-1}{a} \binom{p-a-1}{2} \cdot M_2.
		\]
		As $p \to \infty$,
		$\binom{p-1}{a} \sim \frac{p^a}{a!},  \binom{p-a-1}{2} \sim \frac{p^2}{2}, 
		M_2 = \Theta(p^S),$
		where $S = \sum_{j=1}^{d-1} m_j$.  Hence
		$
		c(n,F) = \Theta(p^{a+2+S}) = \Theta(n^{a+2+S}).$
		
		\noindent\textbf{2. The competing construction $H'$}
		
		Next, we construct an $n$-vertex graph $H'$ with $\operatorname{ex}(n,F) + q$ edges and fewer than \(q\cdot c(n,F)\) copies of \(F\). 
		Let $N := a+1+p$ and $E': = \binom{a+1}{2} + (a+1)p + q$. 
		Replace $H[R \cup V_1]$ by a disjoint union of $k$ components $C_1, \dots, C_k$, where 
		$C_1, \dots, C_{k-1}$ are cliques of sizes $s$ and $s+1$, and $C_k$ is the remaining 
		component (which collects the leftover vertices and need not be a clique), such that the 
		total number of edges is exactly $E'$.  Here
		$
		s = \Theta\!\left(1 + \frac{q}{p}\right),  k = \Theta\!\left(\frac{p}{s}\right),$
		and $|C_i| = O(s)$ for all $i \in [k]$.  All edges between
		\(R\cup V_1\) and \(V_2\cup\cdots\cup V_d\), as well as all edges
		between distinct \(V_i,V_j\) with \(i,j\ge2\), are retained. Note that (1) different $C_i$'s have no edges between them; (2) $C_i$ (for $i \le k-1$) 
		is a clique; (3) $V_j$ ($j \ge 2$) is an independent set; (4) $C_i$ and $V_j$ are completely 
		connected; (5) $V_i$ and $V_j$ ($i,j \ge 2$) are completely connected. We next estimate \(\mathcal N_F(H')\).
		
		\noindent\textbf{3. Counting copies in $H'$}
		
		For any copy $F'$ of $F$ in $H'$, let $F'_0(F')$ be the induced subgraph isomorphic to
		$F_0$.
		Let $t: = |V(F'_0(F') \cap (V_1 \cup R)|$.  Clearly $0 \le t \le 2a+5$ since \(|V(F_0)|=2a+5\).
		We distinguish two cases.
		
		\noindent\textbf{Case 1}. $t = 2a+5$.
		
		In this case $F'_0(F') \subseteq H'[V_1 \cup R]$, i.e.\ $F_0$ lies entirely inside the cliques 
		$C_1, \dots, C_k$.  Since the fork $B$ is connected, it must be contained in a single clique $C_i$, 
		giving $O(k \cdot s^5)$ choices.  The $a$ independent edges of $M_a$ are vertex-disjoint, and each lies 
		within some clique, giving $O(k^a \cdot s^{2a})$ choices.  Hence
		\[
		\mathcal{N}_{F_0}(H'[V_1 \cup R]) = O(k^{a+1} s^{2a+5}) 
		= O\bigl((p/s)^{a+1} s^{2a+5}\bigr) 
		= O(p^{a+1} s^{a+4}).
		\]
		Since $F'_0(F')$ occupies no vertices in $V_2, \dots, V_d$, the multipartite part 
		$K(m_1, \dots, m_{d-1})$ embeds into $V_2, \dots, V_d$ exactly as in $H^1$, contributing 
		$\Theta(p^S)$.  Thus
		\[
		\mathcal{N}_F^{(1)}(H') = O(p^{a+1+S} s^{a+4}).
		\]
		
		\noindent\textbf{Case 2}. $t \le 2a+4$.
		
		Now $F'_0(F')$ has at least one vertex outside $V_1 \cup R$.   The outside vertices lie in 
		$V_2 \cup \cdots \cup V_d$ and suppose that \(F'_0(F')\) uses vertices from \(r\ge1\)  distinct parts among them.
		Since a vertex in $V_j$ is not adjacent to other vertices in the same $V_j$, the parts 
		that contain vertices of $F'_0(F')$ are polluted: their remaining vertices cannot be used for 
		$K(m_1, \dots, m_{d-1})$, because a multipartite vertex must be adjacent to all of $F_0$.
		Hence the multipartite part loses at least $r$ parts.  Let \(T\) be the total size of the multipartite classes that must
		therefore be relocated into \(R\cup V_1\). Since \(m_j\ge a+2\), we have $T \ge r(a+2)$.
		
		All vertices of these missing parts must be pairwise adjacent (different parts of the 
		multipartite must be completely connected) and must also be adjacent to all vertices of 
		$F'_0(F') \cap H'[V_1 \cup R]$.  Since different cliques $C_i$ have no edges between them, all 
		these vertices together with $F'_0(F') \cap H'[V_1 \cup R]$ must lie in the same clique 
		$C_i$.
		
		We now bound $r$ and $T$.  The subgraph $F'_0(F') - (V_1 \cup R)$ is not an independent set: 
		$F_0$ has vertex cover number at least $a+2$, so its outside part must contain at least one 
		edge.  Within $V_2 \cup \cdots \cup V_d$, an edge can only exist between different parts, 
		so $F'_0(F')$ occupies at least $2$ distinct external parts.  Moreover, even if $t \le a+1$, 
		the outside part still spans an edge, forcing $r \ge 2$.  If $t \ge a+2$, we only know 
		$r \ge 1$, but then $T \ge a+2$.  In all subcases one checks that
		\[
		t + T \ge 2a+4.
		\]
		(Indeed, if $t \le a+1$ then $r \ge 2$ gives $T \ge 2a+4$; if $t \ge a+2$ then 
		$T \ge a+2$ and $t+T \ge 2a+4$.)
		
		Now count the embeddings.

		If $t \ge 1$, the $t$ vertices of $F_0(F') \cap H'[V_1 \cup R]$ lie in some $C_i$ 
		($k = \Theta(p/s)$ choices, $O(s^t)$ ways to choose the vertices).  
		If $t = 0$, no clique is selected at this stage.  
		The remaining $2a+5-t$ vertices are chosen outside, giving $O(p^{2a+5-t})$ choices.  
		Thus \[
		\mathcal{N}_{F_0}^{(2)}(H') = 
		\begin{cases}
			O(p^{2a+5}), & t = 0, \\[4pt]
			O(p^{2a+6-t} s^{t-1}), & t \ge 1.
		\end{cases}
		\]
		
		
		The unpolluted external parts contribute $\Theta(p^{S-T})$.  
		The missing $T$ vertices must lie in $C_i$.  
		If $t \ge 1$, the clique $C_i$ is already fixed, so these vertices are chosen within $C_i$ 
		in $O(s^T)$ ways.  
		If $t = 0$, no clique is fixed yet; the first vertex has $\Theta(p)$ choices (any vertex 
		in $V_1 \cup R$), fixing the clique, and the remaining $T-1$ vertices are chosen in that 
		clique, giving $O(p \cdot s^{T-1})$ ways.  
		Thus 
		\[
		\N_M^{(2)}(H') = 
		\begin{cases}
			O(p^{S-T+1} s^{T-1}), & t = 0, \\[4pt]
			O(p^{S-T} s^T), & t \ge 1.
		\end{cases}
		\]
		Consequently,
		\[
		\mathcal N_F^{(2)}(H')
		=
		\begin{cases}
			O_F(p^{2a+6+S-T}s^{T-1}),
			&t=0,\\[2mm]
			O_F(p^{2a+6+S-t-T}s^{t+T-1}),
			&t\ge1.
		\end{cases}
		\]
		
		In both subcases, compare with Case~1:
		\begin{align*}
			\frac{\mathcal{N}_F^{(2)}(H')}{\mathcal{N}_F^{(1)}(H')}
			&= O\left(\frac{p^{2a+6+S-t-T} s^{t+T-1}}{p^{a+1+S} s^{a+4}}\right) 
			= O\left(p^{a+5-t-T} s^{t+T-a-5}\right) 
			= O\left(\left(\frac{s}{p}\right)^{t+T-a-5}\right).
		\end{align*}
		(The same formula holds for $t=0$ by taking $p^{2a+6+S-T} \cdot s^{T-1}$ and comparing; 
		the algebra yields $p^{a+5-T} s^{T-a-5}$, and with $T \ge 2a+4$ the exponent of $p$ is 
		negative.)
		Since $t+T \ge 2a+4$, we have $t+T-a-5 \ge a-1 \ge 0$ (as $a \ge 3$).  
		Hence $s/p \le 1$ implies the ratio is $o(1)$. Thus, $\mathcal N_F(H') = O(p^{a+1+S}s^{a+4})$.
		
		\noindent\textbf{4. Compare $q\cdot c(n,F)$ and $\N(H')$}
		
		Since
		$c(n,F)=\Theta_F(p^{a+2+S}),$
		we obtain
		\[\frac{\mathcal N_F(H')}{c(n,F)}
		=
		O_F\!\left(
		\frac1{p}
		+
		\frac{q^{a+4}}{p^{a+5}}
		\right),\quad
		\frac{\mathcal N_F(H')}{qc(n,F)}
		=
		O_F\!\left(
		\frac1{pq}
		+
		\frac{q^{a+3}}{p^{a+5}}
		\right).
		\]
		Recall that $p = \left\lfloor \frac{n-a-1}{d} \right\rfloor$ and \(n\) is sufficiently
		large. If $q\le\varepsilon n^{\frac{a+5}{a+4}},$ we obtain $
		\frac{q^{a+4}}{p^{a+5}}
		\le O(\varepsilon^{a+4}).$ Choosing \(\varepsilon>0\) sufficiently small  gives
		$h_F(n,q)
		\le
		\mathcal N_F(H')<c(n,F)$.
		Moreover, if $
		q\le\varepsilon n^{\frac{a+5}{a+3}},$
		we have $
		\frac{q^{a+3}}{p^{a+5}}
		\le O(\varepsilon^{a+3}).$ Choosing \(\varepsilon>0\) sufficiently small gives
		$h_F(n,q)
		\le
		\mathcal N_F(H')<qc(n,F),$ as required.
		This completes the proof. 
	\end{proof}


	\section{Structural tools for hypergraphs}\label{h:sec:tools}
	This section collects the tools used to prove Theorem~\ref{h:thm:connected-exact}.
	We begin with a matching bound and an embedding lemma based on fat pairs.
	We then state Gerbner's structural theorem and the stability theorem of
	Ma and Qiu.
	These results give a nearly balanced partition for an extremal
	$\mathcal F$-free hypergraph.
	
	The matching number $\nu(H)$ is the largest number of pairwise
	disjoint edges.
	\begin{lemma}\label{h:lem:small-cover}
		Fix integers $r\ge2$ and $s\ge0$. If an $r$-graph $H$ on at most
		$n$ vertices has $\nu(H)\le s$, then there is a set $S$ of at most
		$s$ vertices such that
		$
		e(H-S)=O_{r,s}(n^{r-2}).
		$
		In particular,
		\begin{equation}\label{h:eq:matching-bound}
		e(H)\le s\binom n{r-1}+O_{r,s}(n^{r-2}).
		\end{equation}
	\end{lemma}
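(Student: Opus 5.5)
We argue by induction on $s$, with $r\ge2$ fixed. The case $s=0$ is trivial: $\nu(H)=0$ means $H$ has no edges, so $S=\emptyset$ works. For the inductive step the plan is the standard ``high-degree vertex'' argument. Let $\Delta=C\,n^{r-2}$ for a large constant $C=C(r,s)$, to be fixed later. We call a vertex $v$ \emph{heavy} if its degree $d_H(v)$ exceeds $\Delta$, and we let $L$ be the set of heavy vertices.

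The first key step is to show $|L|\le s$. Suppose instead there are $s+1$ heavy vertices $v_1,\ldots,v_{s+1}$. We build a matching of size $s+1$ greedily, choosing for each $v_i$ an edge through $v_i$ that avoids all vertices used so far (at most $(r)(s+1)$ of them, the other heavy vertices included). The number of edges through $v_i$ that also contain a given vertex $u\neq v_i$ is at most $\binom{n}{r-2}$. So the forbidden vertices kill at most $r(s+1)\binom n{r-2}$ edges through $v_i$, which is less than $\Delta$ once $C$ is large. This contradicts $\nu(H)\le s$.

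Take $S=L$. It remains to bound $e(H-S)$ in two cases.
\begin{itemize}
\item If $|L|\ge1$, then $\nu(H-L)\le s-|L|$. Otherwise we could extend a matching of size $s-|L|+1$ in $H-L$ by $|L|$ edges, one through each heavy vertex, chosen disjointly by the same greedy count (the constraint set is still bounded), giving a matching of size $s+1$ in $H$. Hence $H-L$ is handled by the maximum degree bound in the next case.
\item If $L=\emptyset$, every vertex has degree at most $\Delta$. A maximal matching $\mathcal M$ has at most $s$ edges, and its vertex set $V(\mathcal M)$, of size at most $rs$, meets every edge. So $e(H)\le rs\Delta=O_{r,s}(n^{r-2})$.
\end{itemize}
In the first case, the same argument applied to $H-L$ works directly: in $H-L$ every vertex has degree at most $\Delta$ (degrees only drop when vertices are deleted), and a maximal matching of $H-L$ has at most $s$ edges and covers all edges. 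Hence
\[
e(H-S)\le rs\,\Delta=O_{r,s}(n^{r-2}).
\]
So induction is not really needed; the two bullet points already suffice.

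For \eqref{h:eq:matching-bound}, each vertex of $S$ lies in at most $\binom{n}{r-1}$ edges. Therefore
\[
e(H)\le |S|\binom n{r-1}+e(H-S)\le s\binom n{r-1}+O_{r,s}(n^{r-2}).
\]

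The main obstacle is the greedy step, namely checking that edges through a heavy vertex that avoid a bounded forbidden set exist. This holds because the number of edges through both $v$ and a fixed $u\neq v$ is at most $\binom{n}{r-2}=O(n^{r-2})$, which is below $\Delta$ once $C$ is chosen large enough in terms of $r$ and $s$.
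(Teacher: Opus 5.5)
Your proof is correct and is essentially the paper's argument: a degree threshold of order $n^{r-2}$, a greedy matching showing there are at most $s$ heavy vertices (using that a fixed pair lies in at most $\binom{n}{r-2}$ edges), and a maximal-matching cover bounding everything else. The only difference is bookkeeping. You threshold the full degree $d_H(v)$ and then cover $H-L$ by a maximal matching of $H-L$. The paper first fixes a maximal matching with vertex set $U$, thresholds $|H_x|$ (the edges meeting $U$ only at $x$), and bounds edges meeting $U$ twice separately. Your induction framing and the claim $\nu(H-L)\le s-|L|$ are true but unnecessary, as you noticed.
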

	
	\begin{proof}
		There is nothing to prove when $s=0$. Otherwise, let $U$ be the
		union of the edges of a maximal matching. Then $|U|\le rs$, and
		every edge meets $U$. For $x\in U$, let
		$
		H_x=\{e\in E(H):e\cap U=\{x\}\}.
		$
		Let $S$ contain those $x\in U$ for which
		$|H_x|>rs\binom n{r-2}$. We claim that $|S|\le s$.
		If $x_1,\ldots,x_{s+1}\in S$ were distinct, we could choose
		disjoint edges $e_i\in H_{x_i}$ in order. Before choosing $e_i$,
		at most $s(r-1)$ vertices outside $U$ have been used. Each such
		vertex lies in at most $\binom n{r-2}$ members of $H_{x_i}$.
		No other vertex of $U$ lies in a member of $H_{x_i}$.
		An edge is therefore available at each step. This contradicts
		$\nu(H)\le s$.
		
		An edge avoiding $S$ either meets $U$ once or at least twice.
		The first type has at most $|U|rs\binom n{r-2}$ members.
		The second has at most $\binom{|U|}{2}\binom n{r-2}$ members.
		This proves $
		e(H-S)=O_{r,s}(n^{r-2})
		$. Counting the edges that meet $S$
		proves \eqref{h:eq:matching-bound}.
	\end{proof}

	
	For a graph $G$,  $G^{+k}$ replaces each
	edge $xy$ by $\{x,y\}\cup Z_{xy}$.
	The vertices  from $V(G)$ are its \emph{core vertices}.
	The vertices in the sets $Z_{xy}$ are its \emph{private vertices}.
	Fix a graph $G$, and let  $f=v(G^{+k})=v(G)+(k-2)e(G)$. A pair $xy$ is
	\emph{fat in $H$} if there are $f$ edges of $H$ whose pairwise
	intersection is exactly $\{x,y\}$.  Let $\mathsf F(H)$ be the 
	graph formed by the fat pairs.
	
	\begin{lemma}\label{h:lem:fat}With this notation,
		the following statements hold.
		\begin{enumerate}[label=(\roman*)]
			\item A pair that is not fat lies in $O_{G^{+k}}(n^{k-3})$ edges of
			$H$. There are $O_{G^{+k}}(n^{k-1})$ edges containing such a pair.
			For a fixed vertex $w$, there are $O_{G^{+k}}(n^{k-2})$ edges
			containing a nonfat pair $wx$.
			\item Suppose that some edges of a copy of $G$ have already been
			expanded. If every remaining core edge is fat and
			the unused core vertices avoid all private vertices already
			chosen, then the partial expansion extends to a copy of $G^{+k}$.
			\item If $H$ is $G^{+k}$-free, then $\mathsf F(H)$ is $G$-free.
		\end{enumerate}
	\end{lemma}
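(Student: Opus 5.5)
We prove the three parts separately. Each rests on the definition of a fat pair together with a sunflower-type argument.

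\emph{Part (i).} Fix a pair $\{x,y\}$ that is not fat, and consider the link family
\[
L_{xy}=\{e\setminus\{x,y\}: \{x,y\}\subseteq e\in E(H)\},
\]
a $(k-2)$-graph on at most $n$ vertices. Edges of $H$ whose pairwise intersection is exactly $\{x,y\}$ correspond to pairwise disjoint members of $L_{xy}$. Since the pair is not fat, $\nu(L_{xy})\le f-1$.

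When $k-2\ge 2$, Lemma~\ref{h:lem:small-cover} with $r=k-2$ and $s=f-1$ gives $|L_{xy}|\le (f-1)\binom n{k-3}+O(n^{k-4})=O_{G^{+k}}(n^{k-3})$. When $k=3$, the family $L_{xy}$ consists of singletons, so $|L_{xy}|=\nu(L_{xy})\le f-1=O(1)=O(n^{0})$.

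The two counting statements follow by summation. There are at most $\binom n2$ nonfat pairs, each in $O(n^{k-3})$ edges, giving $O(n^{k-1})$ edges in total. For a fixed $w$, there are at most $n$ pairs $wx$, giving $O(n^{k-2})$ edges.

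\emph{Part (ii).} We expand the remaining core edges one at a time, greedily. At any moment, the set $U$ of used vertices has size at most $f$. It consists of all core vertices, the private vertices already chosen, and the vertices of the edges built so far.

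Take a remaining core edge $xy$; it is fat, so there are $f$ edges $e_1,\dots,e_f$ of $H$ whose pairwise intersections are exactly $\{x,y\}$. The sets $e_j\setminus\{x,y\}$ are therefore pairwise disjoint. Hence each vertex of $U\setminus\{x,y\}$ lies in at most one of them. This vertex set has size at most $f-2<f$, so some $e_j$ avoids $U\setminus\{x,y\}$. We set $Z_{xy}=e_j\setminus\{x,y\}$.

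Check the new edge against the conditions of an expansion.
\begin{itemize}
\item Its private vertices are new: they avoid all core vertices, which lie in $U$.
\item Its private vertices are disjoint from earlier private sets, which also lie in $U$.
\end{itemize}
The hypothesis that unused core vertices avoid earlier private vertices ensures the core vertices remain distinct from all private sets. Hence the result after all steps is a copy of $G^{+k}$.

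The only care needed is the bookkeeping that $|U\setminus\{x,y\}|\le f-2$. This holds because $U$ is contained in the final vertex set of the copy, which has $f$ vertices and includes $x$ and $y$.

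\emph{Part (iii).} Suppose $\mathsf F(H)$ contains a copy of $G$. Start part (ii) with no edges expanded. All core edges are then fat, and no private vertices have been chosen yet. Part (ii) yields a copy of $G^{+k}$ in $H$, which is a contradiction.

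I expect no real obstacle. The most delicate point is part (i) for $k=3$, where Lemma~\ref{h:lem:small-cover} does not apply with $r=k-2=1$ and the direct singleton count is needed.
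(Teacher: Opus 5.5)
Your proof is correct and follows the paper's argument. In (i) you use the link-family matching bound, in (ii) the greedy choice of a fat edge avoiding fewer than $f$ forbidden vertices, and (iii) is the special case with nothing pre-expanded. The only cosmetic difference is in (i): the paper bounds the link family directly by the vertex cover formed by a maximal matching, which gives $(k-2)(f-1)\binom n{k-3}$ uniformly for all $k\ge3$, whereas you invoke Lemma~\ref{h:lem:small-cover} and handle $k=3$ separately.
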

	
	\begin{proof}
		For a nonfat pair $xy$, the family
		$\{e\setminus\{x,y\}:xy\subseteq e\in E(H)\}$ has no matching
		of size $f$. The vertices of a maximal matching form a vertex cover of size
		at most $(k-2)(f-1)$. Therefore its edges number at most
		$(k-2)(f-1)\binom n{k-3}$. This also holds for $k=3$.
		Sum over all pairs, or only pairs containing $w$, to prove (i).
		
		For (ii), take the remaining core edges in order. For a fat pair
		$xy$, its $f$  edges in the definition of fat are disjoint outside $xy$.
		The other core vertices and all private vertices already chosen
		form a forbidden set of size less than $f$. Each forbidden vertex
		meets at most one fat edge outside $xy$. Thus at least one fat
		edge avoids this set. Repeat this argument until all edges are expanded.
		Part (iii) is a special case and follows by starting with no expanded edges.
	\end{proof}
	
	We use the following part of Gerbner's structure theorem.
	It does not require the  graph $Q$ to be connected. We state only the parts needed here.
	
	\begin{theorem}[Gerbner {\cite[Theorem 2.5]{h:Gerbner2025}}]
		\label{h:thm:structure}
		Let $Q$ be an  ordinary graph with $\chi(Q)=r+1>k$, and let $H$ be an extremal
		$Q^{+k}$-free $k$-graph on $n$ vertices. For sufficiently large $n$,
		there is a partition $V(H)=V_1\sq\cdots\sq V_r$ and a set
		$X$ of bounded size such that, in the graph $\mathsf F(H)$, with fatness defined using $f=v(Q^{+k})$,
		\begin{enumerate}[label=\textup(\roman*)]
			\item every vertex of $X$ has $\Omega(n)$ neighbors in each part $V_i$;
			\item every vertex of $V_i\setminus X$ has $o(n)$ neighbors in
			$V_i$ and $|V_j|-o(n)$ neighbors  in each $V_j$,
			$j\ne i$.
		\end{enumerate}
		The estimates are uniform over the stated vertices.
	\end{theorem}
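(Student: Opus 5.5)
Since this is Gerbner's theorem, imported from \cite{h:Gerbner2025}, we only indicate how we would reprove it. The plan is to move everything to the fat graph $\mathsf F(H)$, where the classical graph machinery applies, and then clean the resulting partition. \emph{Step 1 (lower bound).} The complete balanced $r$-partite $k$-graph (all $k$-sets meeting $k$ distinct parts) is $Q^{+k}$-free. Indeed, in a copy of $Q^{+k}$ the two core vertices of each expanded edge lie in different parts, so the core would be properly $r$-coloured, contradicting $\chi(Q)=r+1$. Hence $e(H)\ge (1-o(1))\,k_k(T_r(n))$, where $k_k$ counts $K_k$'s.

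\emph{Step 2 (upper bound and stability via fat pairs).} By Lemma~\ref{h:lem:fat}(i), at most $O(n^{k-1})$ edges of $H$ contain a nonfat pair. Every other edge is a $K_k$ in $\mathsf F(H)$, so
\[
e(H)\le k_k(\mathsf F(H))+O(n^{k-1}).
\]
By Lemma~\ref{h:lem:fat}(iii), $\mathsf F(H)$ is $Q$-free. Since $\chi(Q)=r+1$, Erdős--Stone--Simonovits together with the clique-counting version of Erdős--Simonovits stability (Zykov/removal type) gives two conclusions. First, $k_k(\mathsf F(H))\le k_k(T_r(n))+o(n^k)$. Second, by Step 1, $\mathsf F(H)$ is $o(n^2)$-close to $T_r(n)$. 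Taking a maximum $r$-cut $V_1\sq\cdots\sq V_r$ of $\mathsf F(H)$, we get $|V_i|=n/r+o(n)$, $o(n^2)$ fat pairs inside parts, and $o(n^2)$ missing cross pairs. As in \eqref{g:LC}, maximality of the cut gives $d_{\mathsf F}(v,V_i)\le d_{\mathsf F}(v,V_j)$ for $v\in V_i$.

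\emph{Step 3 (cleaning).} Fix a small $\varepsilon$. Let $X$ be the set of vertices having at least $\varepsilon n$ fat neighbours inside their own part. By the max-cut inequality, each such vertex also has $\ge\varepsilon n$ fat neighbours in every other part, which is (i). $X$ is bounded: otherwise, by Lemma~\ref{g:lem-2.11} iterated over the parts, exactly as in Theorem~\ref{g:lem-high-bounded}(i), we find $v(Q)$ vertices of $X$ together with linear sets in each part, all fully fat-adjacent. Together with the near-completeness of the cross pairs, this yields a blow-up containing $Q$ in $\mathsf F(H)$. Lemma~\ref{h:lem:fat}(ii) then lifts it to $Q^{+k}\subseteq H$, a contradiction.

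Next we must show that every $v\notin X$ has $|V_j|-o(n)$ fat neighbours in each $V_j$, $j\ne i$. Here we use extremality through a symmetrization argument. Suppose some $v$ misses $\varepsilon n$ cross fat neighbours. Then its link has at most $(1-c\varepsilon)$ times the typical degree $\sim k_{k-1}$-count. Delete $v$ and add a clone of a typical vertex $u$ of $V_i$ (a vertex whose own degree is near-optimal). This strictly increases $e(H)$. The clone is harmless: any copy of $Q^{+k}$ using both $u$ and its clone can be rerouted, since $u$ has many twins-up-to-$o(n)$ in $V_i$. Only boundedly many vertices can be bad in this way, by the iterated deletion/counting argument of Claim~\ref{g:Claim 3}, and we move them into $X$. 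They must then still satisfy (i), or else be deleted and cloned again, contradicting extremality.

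The main obstacle is this last cleaning step. Fat pairs are defined through $H$, and cloning or deleting vertices changes fatness. So one must check carefully that the modified hypergraph stays $Q^{+k}$-free and that the degree loss of a bad vertex is linear in the hypergraph degree, not only in the fat degree. We would first try to control this with Lemma~\ref{h:lem:fat}(i): the edges through nonfat pairs at a fixed vertex number only $O(n^{k-2})$, which converts fat-degree deficits into link-degree deficits of order $\varepsilon n^{k-1}$.
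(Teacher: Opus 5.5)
The paper gives no proof of this statement; it imports it from \cite{h:Gerbner2025}. Judged as a self-contained reproof, your Steps 1--2 are sound. So is the bound on the number of vertices with at least $\varepsilon n$ own-part fat neighbours, which uses Lemma~\ref{g:lem-2.11}, the near-completeness of cross pairs, and Lemma~\ref{h:lem:fat}(ii). The real content of the theorem, however, is the per-vertex statement (ii) together with the (i)/(ii) dichotomy with uniform constants. There the proposal has a genuine gap, as you partly acknowledge.

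\textbf{Quantifiers.} You define $X$ by a fixed threshold $\varepsilon n$. Then vertices outside $X$ have fewer than $\varepsilon n$ own-part fat neighbours, not $o(n)$. Moreover, the bound $|X|\le M(\varepsilon)$ from Lemma~\ref{g:lem-2.11} blows up as $\varepsilon\to0$. To get a bounded $X$ with $o(n)$ outside it and uniform $\Omega(n)$ inside it, you need an extra extremality argument showing that these sets stabilize. This argument is absent.

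\textbf{Symmetrization.} The symmetrization meant to give $|V_j|-o(n)$ cross fat neighbours does not work as written, for four reasons.

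(a) A deficit of $\varepsilon n$ cross fat neighbours at $v$ does not yield a deficit of order $\varepsilon n^{k-1}$ in $d_H(v)$.
\begin{itemize}
\item Lemma~\ref{h:lem:fat}(i) only controls edges containing a nonfat pair through $v$. Edges $\{v\}\cup S$ where $S$ itself spans a nonfat pair, for instance two vertices of one part, are not bounded per vertex; a single vertex can carry $\Theta(n^{k-1})$ of them.
\item Link sets meeting $v$'s own part contribute up to $\varepsilon n\binom n{k-2}$. This is the same order as the claimed loss, unless you separate scales, as is done with $\varepsilon^2 n$ versus $\varepsilon n$ in the proof of Theorem~\ref{g:lem-high-bounded}.
\end{itemize}

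(b) A vertex $u$ with a nearly complete fat neighbourhood need not have nearly maximal degree in $H$. Fatness needs only $f$ edges per pair. So ``typical'' must be measured by hyperedge degree.

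(c) Cloning $u$ in a $k$-graph can create $Q^{+k}$ with the clone used as a core vertex or as a private vertex. ``Twins up to $o(n)$'' has no meaning for hyperedges. Rerouting needs either the same $(k-1)$-sets in the link of a replacement vertex, or fat pairs in $H$ to re-expand. The standard repair is to insert a new vertex whose link consists only of $(k-1)$-sets that are fat cliques of good, non-exceptional vertices in distinct other parts. Then every copy through it can be rerouted using Lemma~\ref{h:lem:fat}(ii).

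(d) Bounding the bad vertices ``by the counting argument'' of Theorem~\ref{g:lem-high-bounded}(i) is circular. That counting itself rests on adding vertices to an extremal hypergraph, which is the same symmetrization. Also, bad vertices moved into $X$ need not satisfy (i). If a valid symmetrization is available, it shows that no bad vertex exists at all. Without one, (ii) remains unproved.
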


	Let $\#K_s(A)$ count the $s$-element
	vertex sets that form a clique in $A$.  Here each clique vertex set is counted once. 
	
	\begin{lemma}\label{h:lem:balance}
		Suppose that $\chi(G)=k+1$, and let $H$ be an extremal
		$G^{+k}$-free $k$-graph with $e(H)\ge t^{(k)}(n)-O(n^{k-1})$.
		The partition in Theorem~\ref{h:thm:structure} satisfies
		$|V_i|=n/k+o(n)$ for every $i$.
	\end{lemma}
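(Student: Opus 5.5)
We argue by counting edges of $H$ relative to the partition $V_1\sq\cdots\sq V_k$ from Theorem~\ref{h:thm:structure} (here $r=k$). Every edge of $H$ is either crossing or noncrossing. The number of crossing edges is at most $\prod_{i=1}^k|V_i|$. The plan is to show that the noncrossing edges number only $o(n^k)$. Then
\[
t^{(k)}(n)-O(n^{k-1})\le e(H)\le \prod_{i=1}^k |V_i|+o(n^k),
\]
and since $\prod|V_i|\le (n/k)^k$ with a deficit of order $n^{k-2}\sum_i(|V_i|-n/k)^2$, it follows that $\sum_i(|V_i|-n/k)^2=o(n^2)$. This is exactly $|V_i|=n/k+o(n)$ for every $i$.

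\textbf{Noncrossing edges.} Let $e$ be a noncrossing edge. Then either $e$ meets $X$, or $e\setminus X$ contains two vertices $x,y$ in the same part $V_i$.
\begin{itemize}
\item Edges meeting $X$ number at most $|X|\binom{n}{k-1}=O(n^{k-1})$, because $|X|$ is bounded.
\item Suppose $e\setminus X$ contains $x,y\in V_i$. If the pair $xy$ is nonfat, Lemma~\ref{h:lem:fat}(i) bounds all edges containing a nonfat pair by $O(n^{k-1})$ in total.
\item If $xy$ is fat, then it is an edge of $\mathsf F(H)$ inside $V_i\setminus X$. By Theorem~\ref{h:thm:structure}(ii), each vertex of $V_i\setminus X$ has only $o(n)$ fat neighbours in $V_i$, uniformly. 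So there are $o(n^2)$ such fat pairs. Each pair lies in at most $\binom{n}{k-2}$ edges, giving $o(n^k)$ edges.
\end{itemize}
Summing the three cases, the noncrossing edges number $o(n^k)$, as claimed.

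\textbf{Balancing step.} Write $|V_i|=n/k+\delta_i$ with $\sum_i\delta_i=0$. A routine expansion, or an AM–GM/concavity argument, gives $\prod_i|V_i|\le (n/k)^k-c_k n^{k-2}\sum_i\delta_i^2$ once every $|\delta_i|$ is small. To handle $|\delta_i|$ that are not small, we use the cruder bound $\prod_i|V_i|\le (n/k)^k(1-\Omega(\epsilon^2))$ whenever some $|\delta_i|\ge\epsilon n$. Comparing this with $t^{(k)}(n)=(n/k)^k-O(n^{k-2})$ and the edge bound above, for every fixed $\epsilon>0$ no part can deviate from $n/k$ by $\epsilon n$ when $n$ is large. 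This gives the lemma.

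\textbf{Main obstacle.} The delicate point is that the fat-pair count inside parts really is $o(n^2)$. This requires the uniformity stated in Theorem~\ref{h:thm:structure}(ii), together with the observation that edges through a nonfat pair are already negligible by Lemma~\ref{h:lem:fat}(i).
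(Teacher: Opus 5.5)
Your proof is correct and follows essentially the same route as the paper. Both arguments bound the noncrossing part by $o(n^k)$ using Lemma~\ref{h:lem:fat}(i) together with the uniform $o(n)$ in-part fat degree from Theorem~\ref{h:thm:structure}(ii), deduce $\prod_i|V_i|\ge t^{(k)}(n)-o(n^k)$, and conclude from the strict maximality of the product at balanced ratios. The only differences are cosmetic: the paper counts $k$-cliques of $\mathsf F(H)$ rather than edges of $H$, and it finishes with a subsequence/compactness argument instead of your quantitative $1-\Omega(\epsilon^2)$ loss.
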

	\begin{proof}
		By Lemma~\ref{h:lem:fat}(i), 
		$\#K_k(\mathsf F(H))\ge e(H)-O(n^{k-1})\ge t^{(k)}(n)-O(n^{k-1}). $
		Theorem \ref{h:thm:structure} gives only $o(n^2)$ fat pairs within the
		parts, including those incident with the bounded exceptional set.
		Thus only $o(n^k)$ of these cliques are noncrossing, and
		$\prod_{i\in[k]}|V_i|\ge \#K_k(\mathsf F(H))-o(n^k)\ge  t^{(k)}(n)-o(n^k)$.
		The ratios $|V_i|/n$ have sum one. Their product is at most
		$k^{-k}$, with equality only when all ratios are $1/k$.
		If balance failed, a convergent subsequence would give unequal
		limiting ratios with product at least $k^{-k}$, a contradiction.
	\end{proof}
	
	The next theorem will  be used for stability.  
	\begin{theorem}[Ma--Qiu {\cite[Theorem 1.4]{h:MaQiu2020}}]
		\label{h:thm:stability-input}
		Let $\chi(Q)=k+1>s\ge2$. If a $Q$-free graph $P$ on $n$
		vertices has at least
		$
		\#K_s(T_k(n))-o(n^s)
		$
		copies of $K_s$, then $P$ differs from the
		balanced complete $k$-partite graph $T_k(n)$ in
		$o(n^2)$ edges  after relabeling.
	\end{theorem}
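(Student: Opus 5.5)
The plan is to reduce to the case of $K_{k+1}$-free graphs, where the extremal problem for the number of $K_s$ copies is classical (Zykov, Erdős), and then to prove stability for that problem through a compactness argument on a reduced (regularity) graph.

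\emph{Step 1: reduction to $K_{k+1}$-free graphs.} Since $\chi(Q)=k+1$, $Q$ is a subgraph of the blow-up $K_{k+1}(t)$ with $t=v(Q)$. By the Erd\H{o}s supersaturation and hypergraph-free Kővári--Sós--Turán-type argument (every graph with $\Omega(n^{k+1})$ copies of $K_{k+1}$ contains $K_{k+1}(t)$), the $Q$-free graph $P$ has only $o(n^{k+1})$ copies of $K_{k+1}$. The graph removal lemma then lets us delete $o(n^2)$ edges to obtain a $K_{k+1}$-free graph $P'$. Each deleted edge lies in at most $\binom{n}{s-2}$ copies of $K_s$, so $P'$ still has at least $\#K_s(T_k(n))-o(n^s)$ copies of $K_s$. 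It therefore suffices to prove that a $K_{k+1}$-free graph with this many $K_s$'s is $o(n^2)$-close to $T_k(n)$, because $P$ and $P'$ differ in only $o(n^2)$ edges.

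\emph{Step 2: the weighted extremal problem.} Apply Szemer\'edi's regularity lemma to $P'$ with parameter $\varepsilon$. Let $R$ be the reduced graph on $L$ clusters, keeping pairs that are $\varepsilon$-regular with density at least $d_0$, and let each pair carry its density. The number of $K_s$ in $P'$ equals, up to $o(n^s)$, the quantity
\[
(n/L)^s\sum_{\{i_1,\ldots,i_s\}\in K_s(R)}\prod_{j<l} d_{i_ji_l}.
\]
By the counting lemma, $R$ is $K_{k+1}$-free: otherwise $P'$ would contain $K_{k+1}$. Bounding every density by $1$ gives the polynomial $\lambda_s(R,x)=\sum_{K_s\subseteq R}\prod_{v}x_v$ evaluated at the uniform weighting. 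Zykov symmetrization shows that over $K_{k+1}$-free graphs and probability weightings, the maximum of $\lambda_s$ is $\binom{k}{s}k^{-s}$. Moreover, every weighting within $\delta$ of the maximum must satisfy two conditions:
\begin{enumerate}[label=\textup{(\alph*)}]
\item all but $\eta(\delta)$ of its weight sits on a set that splits into $k$ classes, each of weight about $1/k$;
\item nearly all pairs between distinct classes are edges, while inside each class there is almost no weight on edges.
\end{enumerate}
The near-equality also forces almost all of these cross densities $d_{ij}$ to be close to $1$.

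\emph{Step 3: transfer back to $P$.} Group the clusters of $R$ according to the $k$ classes from Step 2, and put the leftover vertices arbitrarily into the classes. In $P'$, the edges inside classes, the missing edges between classes, the irregular pairs and the sparse pairs together number $O(\varepsilon+d_0+\eta)n^2$. Letting these parameters tend to $0$ gives the $o(n^2)$ conclusion, after relabeling, for $P'$ and hence for $P$.

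\emph{Main obstacle.} I expect the hardest point to be the quantitative stability in Step 2, namely that near-maximizers of $\lambda_s$ (with densities) on $K_{k+1}$-free weighted graphs are close to balanced $k$-partite. I would first try a compactness argument: if stability failed, there would be a sequence of finite weighted graphs whose limit, viewed as a graphon, is $K_{k+1}$-free and maximizes the $K_s$ density. Symmetrization then identifies the unique maximizer as the balanced $k$-partite graphon; this uses $s\le k$, since for $s\le k$ Zykov symmetrization strictly increases the clique count unless twins are merged. The same uniqueness contradicts the assumed failure of stability.
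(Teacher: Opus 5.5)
The paper gives no proof of this statement. It quotes the result from Ma and Qiu and uses it only as a black box in Proposition~\ref{h:prop:stability}, with $s=k$, applied to the $G$-free fat-pair graph. Your outline is therefore an independent derivation, and as an outline it is sound. Step~1 is correct and is the natural first move: Erd\H{o}s's theorem applied to the $(k+1)$-uniform clique hypergraph gives only $o(n^{k+1})$ copies of $K_{k+1}$; the removal lemma then gives $P'$; and each deleted edge destroys at most $\binom{n}{s-2}$ copies of $K_s$. Steps~2--3 correctly reduce everything to stability for Zykov's problem (maximize $K_s$ among $K_{k+1}$-free graphs). Citing buys brevity. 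Your route buys self-containment, but all the real content sits in the step you only sketch.

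Three points in that step need more than you wrote.
\begin{itemize}
\item \emph{The regularity detour does not make the problem easier.} With uniform weights, the stability you need for the weighted reduced graph $R$ is the same problem as for $P'$. Since you finish by compactness anyway, you can take graphon limits of the graphs $P'$ directly.
\item \emph{Strictness of symmetrization for $s\ge3$.} The justification ``symmetrization strictly increases the clique count unless twins are merged'' is too quick. A cloning step is strict only if the rooted $K_s$-densities differ, or if the pair being destroyed lies in a positive density of copies of $K_s$. Pairs lying in no $K_s$ give no gain. So uniqueness of the limiting maximizer needs a genuine argument.
\item \emph{Getting back from the limit to (a)--(b).} Passing from cut-distance convergence to the balanced complete $k$-partite graphon back to (a)--(b) uses that the limit is $\{0,1\}$-valued: cut-norm convergence to an indicator function forces $L^1$ convergence. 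You then still have to round the resulting overlay to an honest vertex partition.
\end{itemize}

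There is a much lighter way to handle the crux, with no regularity or graphons. Use the Maclaurin-type inequality for $K_{k+1}$-free graphs due to Khadzhiivanov and to S\'os and Straus:
\[
\bigl(\#K_s/\binom{k}{s}\bigr)^{1/s}\le\bigl(e/\binom{k}{2}\bigr)^{1/2}.
\]
Applied to $P'$, it gives $e(P')\ge t_k(n)-o(n^2)$, and the Erd\H{o}s--Simonovits stability theorem finishes the proof.
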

	\begin{proposition}\label{h:prop:stability}
		Let $\chi(G)=k+1$, and suppose that $\J_a(n)$ is the unique
		extremal $G^{+k}$-free $k$-graph for all large $n$, where $a$ is
		fixed. Then $G^{+k}$ is stable.
	\end{proposition}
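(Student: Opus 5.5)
The plan is to pass from the hypergraph to its fat-pair graph $\mathsf F(H)$ and apply the Ma--Qiu stability theorem (Theorem~\ref{h:thm:stability-input}) with $s=k$. The uniqueness hypothesis is used only so that $J_n=\J_a(n)$ is well defined. Fix an $n$-vertex $G^{+k}$-free $k$-graph $H_n$ with $e(H_n)=\ex(n,G^{+k})-o(n^k)$. Since $\J_a(n)$ is extremal, $\ex(n,G^{+k})=e(\J_a(n))=t^{(k)}(n)+O(n^{k-1})$. Hence
\[
e(H_n)\ge t^{(k)}(n)-o(n^k).
\]
Also $\J_a(n)$ differs from $\T^{(k)}(n)$ in only $O(n^{k-1})$ edges: they are the edges meeting $S$, together with a change in part sizes by at most $a$. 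So it suffices to show that $H_n$ differs from $\T^{(k)}(n)$ in $o(n^k)$ edges after relabeling.

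\emph{Counting cliques in the fat graph.} Let $P=\mathsf F(H_n)$, with fatness defined using $f=v(G^{+k})$. By Lemma~\ref{h:lem:fat}(iii), $P$ is $G$-free. By Lemma~\ref{h:lem:fat}(i), only $O(n^{k-1})$ edges of $H_n$ contain a nonfat pair. Every other edge of $H_n$ has all $\binom k2$ of its pairs fat, so its vertex set is a $k$-clique of $P$, and distinct edges give distinct cliques. Therefore
\[
\#K_k(P)\ge e(H_n)-O(n^{k-1})\ge t^{(k)}(n)-o(n^k)=\#K_k(T_k(n))-o(n^k).
\]
Since $\chi(G)=k+1>k\ge2$, Theorem~\ref{h:thm:stability-input} with $Q=G$ and $s=k$ gives a partition $V_1\sq\cdots\sq V_k$ with $|V_i|=n/k+o(n)$, such that $P$ differs from the complete $k$-partite graph on this partition in $o(n^2)$ pairs. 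In particular, $P$ has only $o(n^2)$ pairs inside the parts.

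\emph{Transferring back to $H_n$.} A noncrossing edge of $H_n$ either contains a nonfat pair, and there are $O(n^{k-1})$ such edges, or it contains a fat pair inside some $V_i$. Each pair lies in at most $\binom{n}{k-2}$ edges, so there are at most $o(n^2)\cdot O(n^{k-2})=o(n^k)$ edges of the second kind. Hence $H_n$ has $o(n^k)$ noncrossing edges. It follows that $H_n$ contains at least $e(H_n)-o(n^k)\ge t^{(k)}(n)-o(n^k)$ crossing edges. The complete $k$-partite $k$-graph on $V_1,\ldots,V_k$ has $\prod|V_i|\le t^{(k)}(n)$ edges. So $H_n$ misses only $o(n^k)$ of its crossing $k$-sets. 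Thus $H_n$ differs from the complete $k$-partite $k$-graph on $(V_i)$ in $o(n^k)$ edges.

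\emph{Conclusion and main point to check.} Because $|V_i|=n/k+o(n)$, moving $o(n)$ vertices turns $(V_i)$ into a balanced partition. This changes only $o(n)\cdot O(n^{k-1})=o(n^k)$ crossing sets, so $H_n$ is $o(n^k)$-close to $\T^{(k)}(n)$, and hence to $\J_a(n)$. The step I expect to need the most care is the clique-counting reduction. It uses that each edge with all pairs fat yields a distinct $k$-clique, and that the $o(n^2)$ within-part fat pairs control the noncrossing edges uniformly. Both of these follow from Lemma~\ref{h:lem:fat}(i), so no further input is needed.
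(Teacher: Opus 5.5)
Your proof is correct and follows the paper's argument. You pass to the fat-pair graph, use Lemma~\ref{h:lem:fat} to get $G$-freeness and at least $t^{(k)}(n)-o(n^k)$ cliques of order $k$, apply the Ma--Qiu theorem with $s=k$, and transfer back. There you bound the noncrossing edges through the $o(n^2)$ within-part fat pairs and the missing crossing edges through the edge count, exactly as the paper does (the paper only adds the small step of choosing the $a$ distinguished vertices so that the remaining parts stay balanced).
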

	\begin{proof}
		Let $H_n$ be a $G^{+k}$-free $k$-graph with
		$e(H_n)=\ex(n,G^{+k})-o(n^k)=t^{(k)}(n)-o(n^k)$.
		Lemma~\ref{h:lem:fat} shows that $\mathsf F(H_n)$ is $G$-free
		and has at least $t^{(k)}(n)-o(n^k)$ cliques of order $k$.
		Theorem~\ref{h:thm:stability-input} places it within $o(n^2)$
		edge changes of a balanced complete $k$-partite graph.
		Each  edge belongs to at most $\binom n{k-2}$ sets of
		$k$ vertices. Hence the clique hypergraph of $\mathsf F(H_n)$
		differs from $\T^{(k)}(n)$ in $o(n^k)$ edges.
		All but $O(n^{k-1})$ edges of $H_n$ lie in this clique hypergraph.
		Therefore $H_n$ has only $o(n^k)$ edges outside $\T^{(k)}(n)$.
		Its edge count also bounds the missing crossing edges by $o(n^k)$.
		Finally, after relabeling, $\T^{(k)}(n)$ differs from $\J_a(n)$
		in $O(n^{k-1})$ edges. Choose the $a$ distinguished vertices so
		that the remaining parts are balanced. Only edges meeting these
		vertices can change. This proves stability.
	\end{proof}
	
	\section{The exact extremal theorem for hypergraphs}\label{h:sec:connected}
	
	Throughout this section, assume \eqref{h:eq:admissible} and use
	$G,G^{+k}$ from \eqref{h:eq:connected-parameters}. Let $f=v(G^{+k})$.
	The graph $G$ is connected and has chromatic number $k+1$.
	Its expansion is therefore connected and linear.
	We prove Theorem~\ref{h:thm:connected-exact}.
	
	\subsection{The candidate and local uniqueness}
	We  use $J(S;V_1,\ldots,V_k)$ for the same construction
	with specified  parts $V_1,\ldots,V_k$, which need not be balanced.
	\begin{lemma}\label{h:con:lem:jfree}
		Every hypergraph $J(S;V_1,\ldots,V_k)$ with $|S|=a$ is $G^{+k}$-free.
		Moreover, $G^{+k}$ is not $k$-partite.
	\end{lemma}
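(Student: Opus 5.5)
We argue exactly as in Lemma~\ref{g:lem-H-free}, through a vertex-deletion invariant of the core graph. The key quantity for $G$ is the least number of core vertices whose removal leaves a graph that is ``$k$-partite-compatible''. For the hypergraph we use this observation: after deleting $S$, the host $J(S;V_1,\ldots,V_k)-S$ is $k$-partite, so each surviving edge of an expansion meets every $V_i$ exactly once.

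\textbf{Step 1 (an auxiliary colouring).} Suppose a copy of $G^{+k}$ lies in $J=J(S;V_1,\ldots,V_k)$ with $|S|=a$. Let $S'$ be the set of core vertices of the copy that lie in $S$. Edges of the expansion meeting $S$ only through private vertices still lie in $\T^{(k)}$ on the core side. Let $E_0$ be the core edges whose expanded edge meets $S$, and note $|E_0|\le a$ by linearity only loosely. It is cleaner to do the following. Each core edge $xy$ whose expanded edge avoids $S$ is crossing, so $x,y$ lie in different parts. Hence the map sending each core vertex outside $S$ to its part is a proper $k$-colouring of $G$ minus the at most $a$ core edges that touch $S$ via their $k$ vertices. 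Every vertex of $S$ lies in at most one expanded edge as a private vertex, or is a core vertex. So we may delete a set $A$ of core vertices together with a set $E'$ of core edges, where $|A|+|E'|\le a$ (one core vertex per element of $S$, or one edge), and the remainder is $k$-colourable.

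\textbf{Step 2 (the invariant of $G$).} We show that $G$ admits no such deletion with $|A|+|E'|\le a$. Deleting an edge is dominated by deleting one of its endpoints, so it suffices to show $\chi(G-A)=k+1$ whenever $|A|\le a$. The join with $K_{m_1,\ldots,m_{k-1}}$ forces the following. Since each $m_i\ge a+1$, after deleting $a$ vertices every class $m_i$ still has a survivor. Those survivors form a $K_{k-1}$ complete to what remains of $M_a\sq K_{1,b}\sq I_h$, and they use $k-1$ distinct colours, all forbidden on that remainder. The remainder of $M_a\sq K_{1,b}$ must therefore be independent. But $M_a\sq K_{1,b}$ contains $a+1$ vertex-disjoint edges ($a$ matching edges and one star edge), so its vertex cover number is $a+1>a$. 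Hence some edge survives, and that edge plus the $K_{k-1}$ forms a $K_{k+1}$, which is a contradiction. This is the main point, and it is where $m_i\ge a+1$ is used.

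\textbf{Step 3 (non-$k$-partiteness).} A $k$-partite $G^{+k}$ would make $G$ properly $k$-colourable, since each core edge $xy$ lies in an edge meeting every part once, so $x,y$ lie in different parts. But $G$ contains $K_{k+1}$ (a matching edge joined with one vertex from each $m_i$ class), so $\chi(G)\ge k+1$. The only delicate point is the bookkeeping in Step 1, namely that each vertex of $S$ destroys at most one core vertex or one core edge. Linearity and the disjointness of private sets make that immediate.
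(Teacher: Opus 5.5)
Your proof is correct, but it is organised differently from the paper's.

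The paper argues by \emph{packing}. It finds $a+1$ vertex-disjoint copies of $K_{k+1}$ in $G$: each matching edge and one star edge, completed by distinct vertices of the $k-1$ classes, which is where $m_j\ge a+1$ enters. It notes that their expansions are vertex-disjoint in $G^{+k}$, private vertices included. Since $K_{k+1}^{+k}$ cannot lie in the $k$-partite $k$-graph $J-S$, each expanded clique needs its own vertex of $S$.

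You argue by \emph{covering}. The part map is a proper $k$-colouring of the core once you remove the set $A$ of core vertices sent into $S$ and the set $E'$ of core edges whose private vertices meet $S$, with $|A|+|E'|\le a$. You then trade each edge for an endpoint and prove directly that $\chi(G-A')=k+1$ whenever $|A'|\le a$.

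The paper's version is shorter, because vertex-disjointness of the expanded cliques absorbs the private-vertex bookkeeping. Its hypothesis ($a+1$ disjoint $K_{k+1}$'s) is also exactly the one reused in Theorem~\ref{h:thm:subgraphs}. Yours is the hypergraph analogue of Lemma~\ref{g:lem-H-free} and gives a sharper statement. The converse is immediate: send $A'$ into $S$, send colour classes into parts, and expand edges with fresh vertices. So your condition is necessary and sufficient for $J(S;V_1,\ldots,V_k)$, with $|S|=a$ and large parts, to be free of the expansion: no set of at most $a$ core vertices may leave a $k$-colourable graph. This condition is implied by, but does not imply, the existence of $a+1$ disjoint $(k+1)$-chromatic subgraphs; $K_7$ with $k=3$, $a=1$ is an example.

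One cleanup: delete the remarks in Step~1 that $|E_0|\le a$ and that only ``at most $a$ core edges'' touch $S$. Both are false. A single core vertex placed in $S$, say the star centre or a vertex of one of the $k-1$ classes, makes every expanded edge at that vertex meet $S$. The formulation you then adopt, deleting $A$ and $E'$ with $|A|+|E'|\le a$, is the correct one and is all your argument uses.
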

	\begin{proof}
		In $M_a\sq K_{1,b}$, take all $a$ matching edges and one edge of
		the star. These $a+1$ edges are pairwise vertex-disjoint.
		Next, complete each edge to a $K_{k+1}$.
		For every such edge, choose one vertex from each of the other $k-1$ parts.
		Since every $m_j\ge a+1$, we can use different vertices for different copies.
		Thus $G$ contains $a+1$ vertex-disjoint copies of $K_{k+1}$.
		Their expansions in $G^{+k}$ are also vertex-disjoint.
		
		A $k$-partite $k$-graph cannot contain $K_{k+1}^{+k}$.
		Indeed, its $k+1$ core vertices would have to lie in different parts.
		This also shows that $G^{+k}$ is not $k$-partite.
		Suppose that $G^{+k}$ embeds into $J=J(S;V_1,\ldots,V_k)$.
		Since $J-S$ is $k$-partite, each expanded clique must meet $S$.
		The $a+1$ cliques are disjoint, which contradicts $|S|=a$.
		
	\end{proof}
	
	\begin{lemma}\label{h:con:lem:local}
		Fix $\eta>0$ and let $J=J(S;V_1,\ldots,V_k)$.
		Assume that $|S|=a$ and $|V_i|\ge\eta n$ for every $i$.
		There is a constant $c_1>0$ for which the following results hold
		for all sufficiently large $n$.
		This constant depends only on $G^{+k}$ and $\eta$.
		\begin{enumerate}[label=(\roman*)]
			\item For every $e\notin E(J)$,
			$ N_{G^{+k}}(J+e)=\Theta(n^{f-a-k}).
			$
			\item If a $G^{+k}$-free $k$-graph $H$ on the same vertex set satisfies
			$|E(J)\setminus E(H)|\le c_1n^{k-1}$, then $H\subseteq J$.
		\end{enumerate}
	\end{lemma}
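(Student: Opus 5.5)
We prove (i) by counting copies through the added edge, and (ii) by a greedy embedding that uses fat pairs (Lemma~\ref{h:lem:fat}(ii)) and treats $e$ itself as the expansion of one core edge.

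\textbf{Structural input.} Write $e\notin E(J)$. Then $e\cap S=\emptyset$ and $e$ is noncrossing, so $e$ contains two vertices $u,v$ of a common part $V_i$. By the proof of Lemma~\ref{h:con:lem:jfree}, $G$ contains $a+1$ vertex-disjoint copies of $K_{k+1}$. In any copy of $G^{+k}$ in $J+e$, each expanded clique either meets $S$ or uses the hyperedge $e$, because $J-S$ is $k$-partite. Now $e$ is the expansion of a single core edge, so it lies in at most one of these disjoint cliques. Hence every copy of $G^{+k}$ in $J+e$ uses $e$ and all $a$ vertices of $S$. These $a+k$ vertices are distinct, since $e\cap S=\emptyset$.

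\textbf{Upper bound in (i).} There are $O(1)$ ways to decide which vertices of the copy go to $S\cup e$. The remaining $f-a-k$ vertices are chosen in at most $n^{f-a-k}$ ways, so $N_{G^{+k}}(J+e)=O(n^{f-a-k})$.

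\textbf{Lower bound in (i).} We use an explicit embedding.
\begin{itemize}
\item Send the star centre and one endpoint of each of $a-1$ matching edges into $S$.
\item Send the remaining matching edge onto the pair $uv$, and let $e$ be its expansion.
\item Send every other vertex of $M_a\sq K_{1,b}\sq I_h$ into $V_i\setminus e$. These vertices span no core edge except $uv$, and all their other core neighbours lie in $S$.
\item Send the $k-1$ classes of $K_{m_1,\ldots,m_{k-1}}$ into the $k-1$ parts $V_j$, $j\ne i$, avoiding $e$.
\item Expand each core edge meeting $S$ by arbitrary fresh vertices; any $k$-set meeting $S$ is an edge of $J$.
\item Expand each core edge between two different parts by fresh vertices taken from the $k-2$ remaining parts, which gives a crossing edge.
\end{itemize}
Every vertex outside $S\cup e$ ranges over a set of size $\Omega(\eta n)$, with $O(1)$ exclusions. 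This gives $\Omega(n^{f-a-k})$ copies.

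\textbf{Plan for (ii).} A naive deletion count fails. Edges $g$ with only $j\le k-2$ vertices outside $S\cup e$ each lie in $\Theta(n^{f-a-k-j})$ of the copies. There are only $O(n^j)$ of them, but all of them may be missing, since $n^{j}\le c_1n^{k-1}$. So the argument must be robust rather than a subtraction, and this is the step I expect to be the main obstacle.

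\textbf{Key estimate for (ii).} In $J$, a pair $xy$ that meets $S$, or lies in two different parts, is contained in $\Omega(n^{k-2})$ edges of $J$. These edges contain $f$ members meeting pairwise exactly in $\{x,y\}$, robustly: deleting fewer than $\varepsilon n^{k-2}$ of them leaves such a family. Suppose such a pair is not fat in $H$. Then at least $\varepsilon n^{k-2}$ edges of $J$ containing $xy$ are absent from $H$. Fix a vertex $x$ and count the missing edges through $x$. There are at most $c_1n^{k-1}$ missing edges in total, and each contains $x$ together with at most $k-1$ candidates $y$. Hence the number of partners $y$ for which $xy$ is a $J$-pair but not fat in $H$ is at most $O(c_1 n/\varepsilon)$, which is at most $\eta n/(4f)$ once $c_1$ is chosen small.

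\textbf{Conclusion of (ii).} Suppose $e\in E(H)\setminus E(J)$. Build the same core embedding as in the lower bound of (i), now greedily inside $H$.
\begin{itemize}
\item Keep $u,v$, take $e$ as the expansion of $uv$, and fix the $a$ images in $S$.
\item Choose the vertices of $K_{m_1,\ldots,m_{k-1}}$ one at a time in their parts, and the other vertices of the star, the matching and $I_h$ in $V_i$.
\item Each new vertex must avoid the $O(\eta n/(4f))$ bad partners of each already placed vertex, together with $O(1)$ used vertices. This is possible because every part has at least $\eta n$ vertices.
\end{itemize}
All core edges other than $uv$ are then fat pairs of $H$. Lemma~\ref{h:lem:fat}(ii), started from the partial expansion consisting of $e$ alone, extends this to a copy of $G^{+k}$ in $H$. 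This contradicts the assumption that $H$ is $G^{+k}$-free, so $H\subseteq J$.
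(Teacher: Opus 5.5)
Your proof of (i) is the paper's: the same embedding for the lower bound, and the same ``every copy contains $S\cup e$'' argument for the upper bound.

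For (ii) your fat-pair argument is correct, but it takes a different route from the paper, and the obstacle you flag is not real. The paper uses a plain deletion count over exactly the lower-bound embeddings from (i). In those embeddings, every edge other than $e$ contains at most one vertex of $S\cup e$. This holds because the core vertices sent to $S\cup\{u,v\}$ are pairwise nonadjacent except for $uv$, and the private vertices of $e$ lie in no other edge. So the edges with $j\le k-2$ free vertices that worried you never occur in your own family. A fixed missing edge of $J$ is then the image of an edge in at most $C_1n^{f-a-2k+1}$ of these embeddings. Hence $c_1n^{k-1}$ missing edges destroy at most $c_1C_1n^{f-a-k}<c_0n^{f-a-k}$ of them.

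Your alternative turns the missing-edge budget into the per-vertex bound $|B_x|\le (k-1)c_1n/\varepsilon$ on non-fat partners. You then embed the core greedily and finish with Lemma~\ref{h:lem:fat}(ii), starting from the partial expansion $e$. All of these steps check out:
\begin{itemize}
\item The robustness of $J$-pairs follows from the matching bound $(k-2)(f-1)\binom n{k-3}$ in the proof of Lemma~\ref{h:lem:fat}(i).
\item Every core edge other than $uv$ joins two parts or meets $S$.
\item Each new vertex has fewer than $f$ placed neighbours to be fat with.
\end{itemize}

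The paper's version is shorter and reuses the family from (i). It also gives a quantitative conclusion: $\Omega(n^{f-a-k})$ copies survive in $H$. Yours gives only existence, but it uses the hypothesis only through the number of missing edges at each single vertex. It is also the same fat-pair greedy technique as Lemma~\ref{h:con:lem:completion}, so it adapts to other embedding plans without rechecking which edges meet the fixed set twice.
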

	\begin{proof}
		We first give a lower bound for $ N_{G^{+k}}(J+e).
		$
		A non-edge $e$ avoids $S$ and contains two vertices $x,y$ in some
		part $V_i$. Fix one edge of $M_a$.
		Map its core endpoints to $x,y$ and its private vertices to
		$e\setminus\{x,y\}$.
		Map the star center and one endpoint of each remaining matching edge to $S$.
		Use each vertex of $S$ exactly once.
		Choose all remaining forest core vertices in $V_i$. The $h$ extra independent vertices are also placed in $V_i$.
		Choose the other $k-1$ core classes in the other parts, one class per part.
		Their sizes are $m_1,\ldots,m_{k-1}$ in any fixed order.

		Every core edge that avoids $S$ and is not the first matching edge
		has its endpoints in distinct  parts.
		Choose one private vertex from each of the other $k-2$ parts to
		make its expansion crossing.
		For a core edge meeting $S$, choose any new private vertices.
		This is possible because every $k$-set meeting $S$ is an edge of $J$.
		Make these choices one at a time, and  avoid vertices already used.
		Exactly $a+k$ vertices have been fixed, and each other vertex has at least $\eta n-O_{G^{+k}}(1)$ choices.
		Thus we obtain at least
		$c_0n^{f-a-k}$ embeddings in $J+e$.
		
		Next, we give an
		upper bound  for $ N_{G^{+k}}(J+e)$.
		Fix the $a+1$ disjoint expanded cliques from
		Lemma~\ref{h:con:lem:jfree}.
		In any embedding of $G^{+k}$ into $J+e$, some edge of one of these
		expanded cliques must be mapped to $e$.
		Otherwise all $a+1$ cliques would lie in $J$.
		Each would have to meet $S$, which is impossible.
		The other $a$ expanded cliques each meet $S$. Since they are
		vertex-disjoint, the embedding uses every vertex of $S$.
		It also uses every vertex of $e$.
		Thus it contains the fixed set $S\cup e$, which has size $a+k$.
		There are only a constant number of ways to embed  these vertices.
		The remaining vertices then have at most $n^{f-a-k}$ choices in total.
		This proves the upper bound, and then
		proves (i).
		
		Suppose that \(H\not\subseteq J\). Choose \(e\in E(H)\setminus E(J)\). Consider the embeddings constructed for the lower bound in (i).
		These embeddings have one more property.
		Every edge other than $e$ contains at most one fixed vertex from $S\cup e$.
		Indeed, the forest core vertices assigned to $S$ are pairwise nonadjacent in $G$.
		They are also not adjacent to either endpoint of the first matching edge.
		The private vertices assigned to $e$ occur
		in no other edge.
		Thus a fixed missing edge of $J$ destroys at most
		$ C_1n^{f-a-k-(k-1)}
		$
		of the constructed embeddings, where $C_1$ depends only on $G^{+k}$.
		To see this, choose the edge of $G^{+k}$ mapped to the missing edge.
		Then choose the image of each of its vertices.
		There are only a constant number of choices to embed these vertices.
		
		Choose $c_1<c_0/(2C_1)$.
		At most $c_1n^{k-1}$ edges of $J$ are missing from $H$.
		These missing edges cannot destroy all the embeddings above.
		Hence $H$ contains a copy of $G^{+k}$, a contradiction.
		This proves (ii).

	\end{proof}

	For any fixed integer $a\ge0$, balancing the part sizes gives
	\begin{align}\label{h:eq:tk-shift}
	t^{(k)}(n-a)=t^{(k)}(n)-a(n/k)^{k-1}+O(n^{k-2}).
	\end{align}
	By the definition of $\Ja(n)$ and Lemma~\ref{h:con:lem:jfree},
	\begin{align}\label{h:con:eq:j-expansion}
	\ex(n,G^{+k})\ge& e(\Ja(n))=\binom nk-\binom{n-a}k+t^{(k)}(n-a)\notag\\
	=&t^{(k)}(n)+a\alpha (n/k)^{k-1}+O_{G^{+k}}(n^{k-2}).
	\end{align}

	\subsection{Non-crossing edges outside a bounded set}
	Let $H$ be an extremal $G^{+k}$-free $k$-graph.
	Since $\chi(G)=2+(k-1)=k+1$, we apply Theorem~\ref{h:thm:structure} and
	fix a resulting partition $V_1,\ldots,V_k$ and
	exceptional set $X$.
	We call the vertices outside $X$ \emph{ordinary}.
	Lemma~\ref{h:lem:balance} gives $|V_i|=n/k+o(n)$ for every $i$.
	
	The next lemma starts with images of the vertices of
	$M_a\sq K_{1,b}$, which is a subgraph of the core graph $G$.
	It then embeds the remaining vertices and edges of $G^{+k}$.
	
	\begin{lemma}\label{h:con:lem:completion}
		Let $L=M_a\sq K_{1,b}$.
		Fix an injective map $\varphi:V(L)\to V(H)$.
		Suppose that the following conditions hold for some $i\in[k]$.
		\begin{enumerate}[label=(\roman*)]
			\item 	Let $R=\varphi(V(L))\cap X$ be the set of images in $X$. Every image outside $X$ lies in $V_i\setminus X$.
			
			\item Some edges $uv$ of $L$ have  expansions
			$
			\{\varphi(u),\varphi(v)\}\cup Y_{uv}\in E(H)$ with $|Y_{uv}|=k-2$.
			These sets $Y_{uv}$ are pairwise disjoint and avoid $\varphi(V(L))$.
			Every remaining edge $uv$ of $L$ satisfies
			$\varphi(u)\varphi(v)\in E(\mathsf F(H))$.
			\item In each $V_j\setminus X$ with $j\ne i$, the vertices of $R$
			have $\Omega(n)$ common neighbors in $\mathsf F(H)$.
		\end{enumerate}
		Then $\varphi$ extends to an embedding of $G^{+k}$ in $H$.
	\end{lemma}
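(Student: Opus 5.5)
The plan is to extend $\varphi$ greedily. First we place the remaining core vertices of $G$: the $h$ isolated vertices of $I_h$ and the $k-1$ classes of $K_{m_1,\ldots,m_{k-1}}$. Then we expand every core edge not yet expanded through fat pairs, using Lemma~\ref{h:lem:fat}(ii).

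\textbf{Placing the core vertices.} The $h$ vertices of $I_h$ have no neighbours in $M_a\sq K_{1,b}$, and they are joined only to the multipartite part. We put them in $V_i\setminus X$, avoiding all vertices already used. For the class of size $m_j$, we assign it to a part $V_{\pi(j)}$ with $j\mapsto\pi(j)$ a bijection $[k-1]\to[k]\setminus\{i\}$. Its vertices must satisfy three requirements.
\begin{itemize}
\item They must be fat-adjacent to every image of $L$ and to every image of $I_h$.
\item They must be fat-adjacent to every vertex already chosen in the other classes.
\item They must avoid all vertices used so far, including the private sets $Y_{uv}$.
\end{itemize}
We choose these vertices one at a time inside $V_{\pi(j)}\setminus X$.

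The candidate set for each new vertex is controlled as follows.
\begin{itemize}
\item By hypothesis (iii), the vertices of $R$ have $\Omega(n)$ common fat neighbours in $V_{\pi(j)}\setminus X$. Call this set $N_j$.
\item Every other constraint comes from a previously chosen ordinary vertex $z$. Either $z\in V_i\setminus X$ (an image of $L$ or of $I_h$), or $z\in V_{j'}\setminus X$ with $j'\ne i,\pi(j)$ (an earlier multipartite vertex). By Theorem~\ref{h:thm:structure}(ii), $z$ is fat-adjacent to all but $o(n)$ vertices of $V_{\pi(j)}$.
\end{itemize}
There are $O(1)$ such constraints, so the candidate set still has size $|N_j|-o(n)-O(1)=\Omega(n)$, and the choice is always possible. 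The order of choices matters: we place all of $I_h$ first, and only then the multipartite classes, one after another. This way every constraint on a new vertex is either a fat adjacency to $R$, which hypothesis (iii) covers, or a fat adjacency to an ordinary vertex lying in a different part, which the $o(n)$ bound covers.

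\textbf{Completing the expansion.} Every core edge of $G$ now falls into one of two cases.
\begin{itemize}
\item It is an edge of $L$ that is already expanded, with private set $Y_{uv}$.
\item It is fat. This covers the edges of $L$ that are fat by hypothesis (ii), and all join edges, which are fat by construction.
\end{itemize}
The unused core vertices avoid all private vertices already chosen, by construction. Lemma~\ref{h:lem:fat}(ii) then extends the partial expansion to a copy of $G^{+k}$ in $H$. The same lemma supplies the required disjointness: each new private set avoids a forbidden set of fewer than $f$ vertices.

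\textbf{Main obstacle.} The main care is needed for vertices of $R\subseteq X$. Ordinary vertices have near-complete fat adjacency to the other parts, but vertices of $X$ do not, so their common neighbourhood must be supplied by hypothesis (iii). Images of $L$ in $X$ may also lie in no particular part, which is why the multipartite classes must be placed in parts $V_j$ with $j\ne i$, where hypothesis (iii) applies.

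A second check is that the vertices of $I_h$ need no fat adjacencies to $L$, so placing them in $V_i$ causes no conflict. The only edges incident to $I_h$ go to the multipartite classes, and those adjacencies are handled when the multipartite classes are chosen afterwards.
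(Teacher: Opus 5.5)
Your proposal is correct and follows the paper's proof essentially step for step: you place $I_h$ in $V_i\setminus X$, assign the $k-1$ join classes bijectively to the parts $V_j$ with $j\ne i$, choose their vertices greedily inside the common $\mathsf F(H)$-neighbourhood of $R$ while losing only $o(n)$ candidates per previously chosen ordinary vertex, and finish with Lemma~\ref{h:lem:fat}(ii). One side remark is slightly off but does not affect the argument: vertices of $X$ do lie in parts of the partition, and the real issue is only that they need not be almost complete in $\mathsf F(H)$ to the other parts, which is why hypothesis (iii) is needed.
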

	
	\begin{proof}
		The images of $V(L)$ have already been chosen.
		The sets $Y_{uv}$ will be the images of the corresponding private vertices.
		We choose every further image to avoid all vertices already used.

		First choose   $h$ vertices  in $V_i\setminus X$.
		There are $n/k+o(n)$ available vertices before excluding the bounded
		set already used. Note that no edge of $G$ joins $I_{h}$ to $V(L)$.
		Assign the other $k-1$ classes of $G$ bijectively to the parts
		$V_j$ with $j\ne i$. Choose their vertices one at a time.
		Start with the common
		$\mathsf F(H)$-neighborhood of $R$, which  has $\Omega(n)$ vertices.
		Require each new
		vertex to be adjacent in $\mathsf F(H)$ to every previously chosen ordinary vertex
		in a different part. 
		By Theorem~\ref{h:thm:structure}, each such image excludes only $o(n)$ choices.
		There are only boundedly many previously chosen vertices.
		Excluding them and all  private-vertex images still leaves
		$\Omega(n)$ choices. No adjacency is required within one class of $G$.
		Continue until every vertex of $G$ has an image.

		Every unexpanded edge of $L$ maps to a fat pair by (ii).
		Every edge of $G$ outside $L$ joins two different classes of the join.
		If one endpoint maps to $R$, the required pair is fat by (iii)
		and our choice of images. Otherwise, both endpoints have ordinary
		images in different parts. The pair is also fat as above.
		Thus every edge still to be expanded maps to an edge of $\mathsf F(H)$.

		Apply Lemma~\ref{h:lem:fat}(ii).
		At each step, avoid the images of all other vertices of $G$
		and all private vertices already used.
		This forbidden set has fewer than $f$ vertices.
		The $f$ fat edges for a fat pair are disjoint outside that pair.
		Hence one fat edge avoids the forbidden set.
		Repeating this choice completes an embedding of $G^{+k}$ in $H$.
	\end{proof}

	For each $i\in[k]$, let
	\[
	\C_i=\{e\in E(H):e\cap X=\varnothing,\ |e\cap V_i|\ge2\}.
	\]
	Let $S_{b+1}$ be a star consisting of $b$ edges with a common
	center, and  consider $S^{+k}_{b+1}$.
	Suppose its center is in $V_i$ and all its edges lie in $\C_i$.
	Each edge then contains another vertex of $V_i$.
	We can use that vertex as its core leaf.
	
	\begin{lemma}\label{h:con:lem:bounded-W}
		There is a set $W\supseteq X$ with $|W|=O_{G^{+k}}(1)$ such that:
		\begin{enumerate}[label=(\roman*)]
			\item if $X\ne\varnothing$, every edge of $H-W$ is crossing;
			\item if $X=\varnothing$, the number $\C$  of non-crossing edges
			in $H-W$ satisfies
			\begin{equation}\label{h:con:eq:residual}
			\C\le\frac{b-1}{2}\,n\binom n{k-2}+O_{G^{+k}}(n^{k-2}).
			\end{equation}
		\end{enumerate}
	\end{lemma}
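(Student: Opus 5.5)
The plan is to build $W$ in two stages: first absorb the ordinary vertices that carry many non-crossing edges, then show that after this absorption the structure of $G^{+k}$ forces the residual non-crossing edges to be either absent (when $X\ne\varnothing$) or few (when $X=\varnothing$). Throughout, the embedding tool is Lemma~\ref{h:con:lem:completion}, fed with images of $L=M_a\sq K_{1,b}$.

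\textbf{Step 1: a matching bound for each $\C_i$.} Fix $i$. Every edge of $\C_i$ avoids $X$ and contains two vertices of $V_i$; I call such a pair a \emph{core pair} of the edge. The first point is that $\C_i$ cannot contain $a+1$ pairwise disjoint edges that, together with one further edge, carry a copy of $S_{b+1}^{+k}$ with center in $V_i$. Given such a configuration, I take the $a+1$ disjoint edges as expansions of $a$ matching edges of $L$ together with the star, using the core pairs inside $V_i$ as the images of the core endpoints. Condition~(i) of Lemma~\ref{h:con:lem:completion} then holds with $R=\varnothing$, and condition~(iii) is vacuous. Conditions~(ii) needs the private sets to be disjoint and to avoid the images, which I arrange by greedy choice. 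Lemma~\ref{h:con:lem:completion} then embeds $G^{+k}$, a contradiction.

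\textbf{Step 2: choosing $W$.} Let $W$ consist of $X$ together with every ordinary vertex $v\in V_i$ lying in at least $\varepsilon n^{k-2}$ edges of $\C_i$. To see that $|W|=O(1)$, suppose $W$ contains $a+b+1$ such vertices in a single $V_i$. Each of them is the center of an $S^{+k}_{b+1}$ inside $\C_i$; I would pick the $b$ star edges greedily, since each vertex is in many edges of $\C_i$ and fixing any vertex kills only $O(n^{k-3})$ of them. Using $a+1$ of these high-degree vertices, one as the star center and $a$ as matching endpoints, I then get the configuration of Step~1, a contradiction. So $|W\setminus X|$ is bounded.

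\textbf{Case $X\neq\varnothing$.} Here I use that a vertex $x\in X$ has $\Omega(n)$ fat neighbours in every part, by Theorem~\ref{h:thm:structure}(i). Suppose a non-crossing edge $e\in H-W$ remains, say in $\C_i$. The goal is to embed $M_a\sq K_{1,b}$ as follows: $e$ serves as the expansion of one matching edge, the star center maps to $x$ with its $b$ leaves among fat neighbours of $x$ in $V_i$, and the other $a-1$ matching edges are found inside $V_i$. For the leaves I would again use the condition $m_i\ge a+1$. The difficulty is that only a single vertex $x$ is available, so the remaining $a-1$ matching edges cannot be routed through $X$. Extremality has to supply them: if $|X|\ge1$ but the graph has few non-crossing edges, then a degree-comparison argument (deleting an offending edge versus the gain from $x$) should show that a single extra non-crossing edge outside $W$ can be enlarged to $a$ disjoint ones. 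This case is the step I expect to be the main obstacle. My first attempt will be to enlarge $W$ further, by all vertices of positive $\C$-degree incident to fat stars centered in $X$, so that the embedding uses only $e$ plus $X$-stars, and then to check the matching count against $|R|$.

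\textbf{Case $X=\varnothing$.} By Step~2, every vertex outside $W$ lies in fewer than $\varepsilon n^{k-2}$ edges of $\C_i$. I then count $\C$ by summing over core pairs. The key claim is that the graph of core pairs in $V_i$ containing many edges must be $S_{b+1}$-free apart from $O(1)$ disjoint exceptions; otherwise Step~1 applies, with the matching supplied by $a$ disjoint further edges. This gives maximum degree at most $b-1$ in the heavy-pair graph, hence at most $\frac{b-1}{2}|V_i|$ heavy pairs. Each heavy pair lies in at most $\binom{n}{k-2}$ edges, so summing over $i$ gives at most $\frac{b-1}{2}n\binom n{k-2}$. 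Light pairs contribute $O(n^{k-2})$ by Lemma~\ref{h:lem:fat}(i) combined with Lemma~\ref{h:lem:small-cover}, which yields \eqref{h:con:eq:residual}.
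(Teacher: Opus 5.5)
There is a genuine gap, and it lies in how you choose $W$. Your Step~1 is fine once it is stated correctly: $a$ pairwise disjoint edges of $\C_i$, together with a disjoint linear star $S^{+k}_{b+1}$ centered in $V_i$ inside $\C_i$, give $L^{+k}$ with all core vertices in $V_i\setminus X$, and Lemma~\ref{h:con:lem:completion} with $R=\varnothing$ finishes. Step~2, however, fails.

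\begin{itemize}
\item Your greedy claim, that fixing a vertex $u$ kills only $O(n^{k-3})$ edges at $v$, holds only when $uv$ is non-fat (Lemma~\ref{h:lem:fat}(i)). A fat pair can lie in $\Theta(n^{k-2})$ edges. So a vertex of $\C_i$-degree at least $\varepsilon n^{k-2}$ need not be the center of any linear $b$-star, and several such vertices need not supply disjoint edges.
\item The set you define can have linear size. Take $X=\varnothing$ and let $\C_i$ consist of the $k$-sets formed by an edge of a fixed $(b-1)$-regular graph on $V_i$ plus one vertex from each of $k-2$ other parts. Every vertex of $V_i$ has $\C_i$-degree $\Theta(n^{k-2})$, yet no vertex centers an $S^{+k}_{b+1}$ inside $\C_i$. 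This is exactly the configuration that the main term of \eqref{h:con:eq:residual} accounts for, so it cannot be excluded at this stage.
\item Your count in case (ii) also misuses Lemma~\ref{h:lem:fat}(i). It bounds the edges through non-fat pairs only by $O(n^{k-1})$, the same order as the main term. So the ``light pairs'' are not negligible unless you argue vertex by vertex.
\end{itemize}

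The missing idea is to take $W$ to be a vertex cover coming from a bounded matching number, not a set of high-degree vertices.

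\begin{itemize}
\item \textbf{Case $X\ne\varnothing$.} Fix $x\in X$. It can serve as the star center with $b$ fat leaves in $V_i\setminus X$, and it has $\Omega(n)$ fat neighbors in every other part. So any $a$ pairwise disjoint edges of $\C_i$ already give $G^{+k}$ via Lemma~\ref{h:con:lem:completion} with $R=\{x\}$. Hence $\nu(\C_i)\le a-1$. Let $W$ be $X$ together with the vertices of a maximal matching of each $\C_i$. Then $|W|\le|X|+k^2(a-1)$, and $W$ meets every non-crossing edge avoiding $X$. No extremality argument or single-edge enlargement is needed, so the obstacle you anticipated never arises.
\item \textbf{Case $X=\varnothing$, parts with a star.} Suppose $\C_i$ contains an $S^{+k}_{b+1}$ centered in $V_i$. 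At most $1+b(k-1)$ matching edges can meet this star, so $\nu(\C_i)\le a+b(k-1)$. Adding the vertices of a maximal matching of $\C_i$ to $W$ removes all of $\C_i$ from $H-W$.
\item \textbf{Case $X=\varnothing$, remaining parts.} For every other $i$, the link in $\C_i$ of each $v\in V_i\setminus W$ has matching number at most $b-1$. Lemma~\ref{h:lem:small-cover} with $r=k-1$ and $s=b-1$ then gives $d(v)\le(b-1)\binom n{k-2}+O(n^{k-3})$ uniformly. Summing over $V_i$, where each edge is counted at least twice, and then over $i$ yields \eqref{h:con:eq:residual}.
\end{itemize}
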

	\begin{proof}
		First suppose that $X\ne\varnothing$.
		Suppose some $\C_i$ contains $a$ pairwise disjoint edges.
		Choose two $V_i$-vertices from each edge as its matching core endpoints.
		Use its other vertices as private vertices.
		Take $x\in X$ as the star center and choose $b$ 
		$\mathsf F(H)$-neighbors of $x$ in $V_i\setminus X$ as its leaves.
		These leaves can be chosen to avoid the matching edges.
		The vertex $x$ also has $\Omega(n)$ neighbors in every other part by Theorem \ref{h:thm:structure}.
		Lemma~\ref{h:con:lem:completion} gives a copy of $G^{+k}$, a contradiction.
		Hence $\nu(\C_i)\le a-1$.
		For each $i$, take a maximal matching in $\C_i$.
		Let $W$ be the union of $X$ and the vertices of these matchings.
		Then $|W|\le|X|+k^2(a-1)$.
		Every non-crossing edge meets $W$, which proves (i).
		
		Now suppose that $X=\varnothing$.
		Suppose $\C_i$ contains an $S^{+k}_{b+1}$ centered in $V_i$.
		The star uses $1+b(k-1)$ vertices.
		If $\C_i$ had a matching of size at least $a+b(k-1)+1$,
		we could delete the matching edges that meet the star.
		At least $a$ matching edges would remain.
		Together with the star, these give an embedding of $L^{+k}$
		with all core vertices in $V_i$.
		Lemma~\ref{h:con:lem:completion} again gives a contradiction.
		Thus, for every such $\C_i$,
		$\nu(\C_i)\le a+b(k-1)$.
		Let $W$ be the union of the vertices of maximal matchings in
		all these families. Then
		$|W|\le k^2(a+b(k-1))$.
		
		Write $\C'_j=\{e\in\C_j:e\cap W=\varnothing\}$.
		If $\C_j$ contained a linear $b$-star centered in $V_j$,
		its maximal matching was included in the construction of $W$,
		so $\C'_j$ is empty. Thus every  $\C'_i$ contains no such star.
		For any $x\in V_i\setminus W$, the  $(k-1)$-graph
		$
		\{e\setminus\{x\}:x\in e\in\C'_i\}
		$
		has  matching number at most $b-1$.
		Apply \eqref{h:eq:matching-bound},  with $r=k-1$ and $s=b-1$.
		It gives the uniform bound
		\[
		d_{\C'_i}(x)\le(b-1)\binom n{k-2}+O_{G^{+k}}(n^{k-3}).
		\]
		Each edge of $\C'_i$ contains at least two vertices of $V_i$.
		Summing degrees over $V_i\setminus W$ therefore gives
		\[
		e(\C'_i)\le\frac{b-1}{2}\,|V_i|\binom n{k-2}
		+O_{G^{+k}}(n^{k-2}).
		\]
		Every non-crossing edge of $H-W$ belongs to at least one
		of these families. Summing over $i$ and using
		$\sum_i|V_i|=n$ proves \eqref{h:con:eq:residual}.
	\end{proof}

	Suppose that for arbitrarily large $n$, there is an extremal
	$G^{+k}$-free $k$-graph that is not isomorphic to $\Ja(n)$.
	Choose a sequence of these hypergraphs with $n\to\infty$.
	The candidate might have fewer than $\ex(n,G^{+k})$ edges, or another extremal
	hypergraph might have the same number of edges.
	
	Lemma~\ref{h:con:lem:bounded-W} bounds $|X|$ and $|W|$ by constants.
	Pass to a subsequence on which these sizes are fixed.
	Label the vertices of $W$.
	Pass to a further subsequence so that each label has fixed membership in $X$
	and lies in a fixed part.
	We use the same labels for the corresponding vertices at different orders.
	Let $X_i=V_i\setminus W$.
	By Lemma~\ref{h:lem:balance}, each $X_i$ has size $n/k+o(n)$.
	
	For $w\in W$, let
	\begin{align*}
	\mathcal L_w=\{e\in E(H):e\cap W=\{w\}\},
	\quad\mathcal L_{wi}&=\{e\in\mathcal L_w:|e\cap X_i|\ge2\}. 
	\end{align*}
	There are only finitely many pairs $(w,i)$.
	The ratios ${|\mathcal L_{wi}|\over (n/k)^{k-1}}$ are bounded.
	We may therefore pass to a further subsequence on which they all converge.
	Define
	\begin{equation*}
	W_i=\left\{w\in W:
	\lim_{n\to\infty}\frac{|\mathcal L_{wi}|}{{(n/k)}^{k-1}}>0\right\}.
	\end{equation*}
	Thus $|\mathcal L_{wi}|=\Omega(n^{k-1})$ when $w\in W_i$,
	and $|\mathcal L_{wi}|=o(n^{k-1})$ otherwise.
	The constant in the first bound may depend on the positive limit.
	It is fixed along this subsequence.
	All later asymptotic statements in this section refer to the chosen subsequence.
	
	We record one consequence of the fat-pair estimate.
	For a fixed  $w\in W$,   only $O_{G^{+k}}(n^{k-2})$ edges of $\mathcal L_w$
	contain a non-fat pair $wx$ with   $x\in V(H)\setminus W$.
	If $w\in V_j\setminus X$, then $w$ has only $o(n)$ fat neighbors in $X_j$ by Theorem \ref{h:thm:structure}.
	Edges using one of these neighbors number at most $o(n)\binom n{k-2}$.
	It follows that
	\begin{equation}\label{h:con:eq:ordinary-own-part}
	|\{e\in\mathcal L_w:e\cap X_j\ne\varnothing\}|
	=o(n^{k-1})
	\qquad(w\in W\cap(V_j\setminus X)).
	\end{equation}
	In particular, such a vertex $w$ does not belong to $W_j$.
	
	The next counting lemma will be used twice.
	The part type of a set records how many of its vertices lie in each part.
	
	\begin{lemma}\label{h:con:lem:patterns}
		Let $\ell\in\{k-1,k\}$.
		Take $\ell$ parts, each of size $n/k+o(n)$.
		Choose $r$ of them as the only parts allowed to contribute two or more vertices to a $k-1$-set.
		The number of such $k-1$-sets that use a part more than once
		is at most
		\[
		\left(\frac r\ell Q_\ell+o(1)\right) (n/k)^{k-1}, \qquad Q_\ell=\frac{\ell^{k-1}}{(k-1)!}-\binom\ell {k-1}.
		\]
	\end{lemma}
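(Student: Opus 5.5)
The plan is to reduce the count to a weighted sum over part types and then use the symmetry between the $\ell$ parts. Write $m=n/k$ and let $P_1,\dots,P_\ell$ be the parts, so $|P_j|=m+o(n)$. A $(k-1)$-set inside $P_1\cup\cdots\cup P_\ell$ has a part type $c=(c_1,\dots,c_\ell)$ with $c_j\ge0$ and $\sum_j c_j=k-1$. The number of sets of type $c$ is
\[
\prod_j\binom{|P_j|}{c_j}=\Bigl(w(c)+o(1)\Bigr)m^{k-1},\qquad w(c)=\prod_j\frac1{c_j!}.
\]
Since $k$ is fixed, there are boundedly many types, so the $o(1)$ errors add up to $o(1)m^{k-1}$. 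It therefore suffices to prove a bound on the weights $w(c)$ alone.

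For a type $c$, let $D(c)=\{j:c_j\ge2\}$ be its set of repeated parts. A set uses some part more than once exactly when $D(c)\neq\varnothing$. Let $R$ be the set of the $r$ distinguished parts. The sets to be counted are those whose type satisfies $\varnothing\ne D(c)\subseteq R$. First I compute the total weight of all types with $D(c)\neq\varnothing$. By the multinomial identity, $\sum_c w(c)=\ell^{k-1}/(k-1)!$, since this is the leading coefficient in $\binom{\ell m}{k-1}$. The types with $D(c)=\varnothing$ are the $0/1$ vectors, and they contribute $\binom{\ell}{k-1}$. Hence
\[
\sum_{D(c)\neq\varnothing}w(c)=Q_\ell .
\]

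The key step is the averaging-plus-symmetry argument. Define
\[
A(R)=\sum_{\varnothing\ne D(c)\subseteq R}w(c).
\]
Permuting coordinates preserves $w$, so $A(R)$ depends only on $|R|=r$; call it $A_r$. Now sum over all $r$-subsets $R$ of $[\ell]$:
\[
\binom{\ell}{r}A_r=\sum_{D(c)\ne\varnothing}w(c)\,\#\{R: D(c)\subseteq R,\ |R|=r\}.
\]
Fix a type with $|D(c)|=d\ge1$. Its multiplicity is $\binom{\ell-d}{r-d}\le\binom{\ell-1}{r-1}=\frac r\ell\binom{\ell}{r}$, because the sets $R$ containing $D(c)$ form a subfamily of those containing a single fixed element of $D(c)$. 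Dividing by $\binom\ell r$ gives $A_r\le\frac r\ell Q_\ell$. Together with the first paragraph, this yields the bound $\bigl(\frac r\ell Q_\ell+o(1)\bigr)(n/k)^{k-1}$.

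I expect the only delicate point to be making sure the symmetry is applied to the limiting weights $w(c)$ and not to the exact counts. The actual part sizes are not equal, so $A(R)$ computed from exact counts need not be independent of $R$. This is handled by passing to the leading-order weights before averaging, as above. The boundary cases need a brief check. When $r=0$ both sides vanish. When $\ell=k-1$ the same computation goes through with $\binom{k-1}{k-1}=1$ crossing type.
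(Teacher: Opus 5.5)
Your proposal is correct and follows the paper's proof. Both arguments reduce to the limiting weights $1/\prod_j c_j!$, compute the total repeated weight $Q_\ell$, use symmetry between the parts, and bound the allowed fraction by $r/\ell$. Your double count over $r$-subsets $R$, with multiplicity $\binom{\ell-d}{r-d}\le\binom{\ell-1}{r-1}$, is the paper's termwise inequality $\binom rj/\binom\ell j\le r/\ell$ rewritten through the identity $\binom{\ell-j}{r-j}/\binom{\ell}{r}=\binom rj/\binom\ell j$.
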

	\begin{proof}
		A type is a vector $(u_1,\ldots,u_\ell)$ of nonnegative
		integers summing to $k-1$.
		The number of sets of this type,
		divided by $(n/k)^{k-1}$, tends
		to $1/\prod_{i\in[\ell]} u_i!$.
		Group the types by their nonempty repeated-part set
		$\{i:u_i\ge2\}$.
		By symmetry, the total coefficient for a fixed repeated-part set depends only on its size $j$.
		Call this coefficient $c_j\ge0$.
		The coefficient for the allowed sets is therefore
		$\sum_{j\ge1}c_j\binom rj$.
		If every part is allowed to repeat, the coefficient is
		\[
		\sum_{j\ge1}c_j\binom\ell j
		=\frac{\ell^{k-1}}{(k-1)!}-\binom\ell {k-1}=Q_{\ell}.
		\]
		For every $1\le j\le\ell$, we have $\binom rj/\binom\ell j\le r/\ell$.
		The numerator is zero when $j>r$.
		Comparing terms proves the bound.
		The case $r=0$ is immediate.
	\end{proof}
	
	\subsection{Determining the exceptional set}
	\begin{lemma}\label{h:con:lem:B-nonempty}
		Assume \eqref{h:eq:admissible}.
		In the sequence chosen above, $X=\varnothing$ cannot hold along an infinite subsequence.
	\end{lemma}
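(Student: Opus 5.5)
We argue by contradiction through an edge count. Assume $X=\varnothing$ along the subsequence, and compare $e(H)$ with the lower bound \eqref{h:con:eq:j-expansion}, namely $e(H)\ge t^{(k)}(n)+a\alpha(n/k)^{k-1}+O(n^{k-2})$. The plan is to show that an extremal $H$ with empty $X$ has at most $t^{(k)}(n)+c(n/k)^{k-1}+o(n^{k-1})$ edges for some $c<a\alpha$. The condition \eqref{h:eq:admissible} will supply exactly the needed inequality.

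\textbf{Splitting the edges.} Split $E(H)$ into four classes.
\begin{enumerate}[label=(\arabic*)]
\item Crossing edges of $H-W$: at most $\prod_i|X_i|\le t^{(k)}(n-|W|)+o(n^{k-1})$. Together with \eqref{h:eq:tk-shift}, this loses $|W|(n/k)^{k-1}$ relative to $t^{(k)}(n)$.
\item Non-crossing edges of $H-W$: by Lemma~\ref{h:con:lem:bounded-W}(ii) there are at most $\frac{b-1}{2}n\binom n{k-2}+O(n^{k-2})$ of them. In units of $(n/k)^{k-1}$ this is $(b-1)\frac{k^{k-1}}{2(k-2)!}=(b-1)D$.
\item Edges meeting $W$ in at least two vertices: $O(n^{k-2})$.
\item The link families $\mathcal L_w$.
\end{enumerate}

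\textbf{Bounding the links.} For $w\in W$, the edges of $\mathcal L_w$ avoid $X_i$ for $w$'s own part by \eqref{h:con:eq:ordinary-own-part}, since every $w$ is ordinary when $X=\varnothing$. So $w\notin W_j$ for its own part $j$, and its $(k-1)$-sets live essentially in the other $k-1$ parts. Its crossing part contributes at most $(n/k)^{k-1}$, exactly cancelling $w$'s loss in class (1). Its repeated-part sets are counted by Lemma~\ref{h:con:lem:patterns} with $\ell=k-1$ and $r=|\{i:w\in W_i\}|$. This yields an extra contribution of at most $\frac r{k-1}Q_{k-1}(n/k)^{k-1}$, where $Q_{k-1}=\frac{(k-1)^{k-1}}{(k-1)!}-1$.

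\textbf{The key structural constraint.} This step is the main obstacle: control $\sum_w|\{i:w\in W_i\}|$. Consider a part $i$ and the vertices $w\in W_i$. Each such $w$ has $\Omega(n^{k-1})$ link edges meeting $X_i$ twice, so these edges contain many disjoint members. If $a$ distinct vertices of $W_i$ each contribute an edge with two vertices in $X_i$, chosen pairwise disjointly, these give $a$ expanded matching edges. Combined with a linear $b$-star inside $\C_i$, or with one more vertex of $W_i$ serving as a star centre whose fat neighbours lie in $X_i$, Lemma~\ref{h:con:lem:completion} produces $G^{+k}$. Condition (iii) of that lemma can be checked using the fat-pair estimate in Lemma~\ref{h:lem:fat}(i). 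The expected outcome is a dichotomy:
\begin{itemize}
\item either $|W_i|\le a$ for each $i$ and $\C_i$ essentially contains no linear $b$-stars, so class (2) drops;
\item or $|W_i|\le a-1$ in the parts where class (2) is large.
\end{itemize}
Making this tradeoff quantitative is the delicate part. I would first try to bound the total by the worse of two scenarios:
\begin{itemize}
\item \textbf{Scenario A:} $\sum_i|W_i|\le ka$ and class (2) vanishes. This gives at most $aQ_{k-1}\frac{k}{k-1}=a\beta$.
\item \textbf{Scenario B:} class (2) is fully present, contributing $(b-1)D$, plus repeated sets from at most $(a-1)$ or $a$ vertices per part, giving about $(a-\text{const})\beta+(b-1)(\beta+D)$-type terms.
\end{itemize}

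\textbf{Conclusion.} Both scenarios should fall below $a\alpha$ precisely when $a(\alpha-\beta)>(b-1)(\beta+D)$ and $a\alpha>2(b-1)D$, which is \eqref{h:eq:admissible}. That contradicts extremality for large $n$ and excludes $X=\varnothing$.
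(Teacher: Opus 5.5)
Your skeleton matches the paper's. You compare $e(H)$ with $t^{(k)}(n)+a\alpha(n/k)^{k-1}$, let the crossing part of each link cancel the loss $|W|(n/k)^{k-1}$, charge $(b-1)D$ for the non-crossing edges of $H-W$, and charge $\beta/k$ per membership $w\in W_i$ via Lemma~\ref{h:con:lem:patterns} with $\ell=k-1$.

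The gap is in the step you flag as the main obstacle, and the mechanism you propose there fails. Since $X=\varnothing$, every $w\in W$ is ordinary. By \eqref{h:con:eq:ordinary-own-part}, a vertex $w\in W_i$ therefore lies in some $V_j$ with $j\ne i$, so it cannot be the star centre. Condition (i) of Lemma~\ref{h:con:lem:completion} requires every core image outside $X$ to lie in $V_i$. The obstruction is also real, not formal: the join classes would have to avoid both $V_i$ and $V_j$, since ordinary vertices have only $o(n)$ fat neighbours in their own part. That leaves $k-2$ parts for $k-1$ pairwise adjacent classes.

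Consequently you get no bound $|W_i|\le a$ or $a-1$. A priori $|W_i|$ can be as large as $|W|$, which may exceed $a+b$, so Scenario A's $\sum_i|W_i|\le ka$ is unjustified. Your dichotomy is also inverted. It is the \emph{presence} of a linear $b$-star in $\C_i$ that empties $\{e\in\C_i:e\cap W=\varnothing\}$, by the construction of $W$ in Lemma~\ref{h:con:lem:bounded-W}, not its absence. Finally, the conclusion is read off from \eqref{h:eq:admissible} rather than derived: the quantity $2(b-1)D$ arises in neither of your scenarios.

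The missing idea is to build the star with an ordinary centre in $X_i$ and the $W_i$-vertices as \emph{private} vertices. Let $I=\{i:|W_i|\ge a+b\}$, and take $i\in I$ and $x\in X_i$. Suppose $b$ vertices $w_1,\dots,w_b\in W_i$ each lie in more than $fn^{k-3}$ edges of $\mathcal L_{w_ji}$ through $x$.
\begin{itemize}
\item Choose one such edge for each $w_j$, pairwise disjoint outside $x$, and take another $X_i$-vertex of each as a leaf. This gives $S^{+k}_{b+1}$ centred at $x$, with each $w_j$ private.
\item Use $a$ further vertices of $W_i$ to supply the expanded matching.
\item All core vertices now lie in $X_i$, so Lemma~\ref{h:con:lem:completion} yields $G^{+k}$, a contradiction.
\end{itemize}
Hence each $x\in X_i$ has at most $b-1$ such high-codegree partners. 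Double counting over $x\in X_i$ then gives
\[
\sum_{w\in W_i}|\mathcal L_{wi}|\le\frac{b-1}{2}|X_i|\binom n{k-2}+O(n^{k-2}).
\]
This is a charge of $(b-1)D/k$ for part $i$, independent of $|W_i|$.

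Parts outside $I$ satisfy $|W_i|\le a+b-1$ and are handled by the patterns lemma. The total coefficient is then
\[
\beta\Bigl(1-\frac{|I|}{k}\Bigr)(a+b-1)+(b-1)D\Bigl(1+\frac{|I|}{k}\Bigr),
\]
which is affine in $|I|/k$. Its endpoint values $\beta(a+b-1)+(b-1)D$ and $2(b-1)D$ are exactly the two quantities that \eqref{h:eq:admissible} places below $a\alpha$.
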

	\begin{proof}
		Suppose that $X=\varnothing$ along an infinite subsequence. Restrict to this subsequence.
		Take the set $W$ from Lemma~\ref{h:con:lem:bounded-W}(ii). Let
		$
		I=\{i:|W_i|\ge a+b\}.
		$
		Fix $i\in I$. 
		We first claim that for each $x\in X_i$, at most $b-1$
		vertices $w\in W_i$ satisfy
		\begin{equation}\label{h:con:eq:high-degree}
		d_i(w,x):=|\{e\in\mathcal L_{wi}:x\in e\}|>f n^{k-3}.
		\end{equation}
		
		If not, choose $b$ such vertices $w_1,\ldots,w_b$.
		Choose edges $e_j\in\mathcal L_{w_ji}$ containing $x$, one at a time.
		Require them to be pairwise disjoint outside $x$.
		Each extra forbidden vertex lies in at most $\binom n{k-3}\le n^{k-3}$ edges containing $w_j,x$.
		Fewer than $f$ vertices need to be avoided.
		Hence \eqref{h:con:eq:high-degree} leaves at least one choice.
		The condition $e_j\cap W=\{w_j\}$ already excludes all other vertices of $W$.
		Choose one vertex of $(e_j\cap X_i)\setminus\{x\}$ as the
		core leaf of each edge.
		This gives a $S^{+k}_{b+1}$ centered at $x$.
		Each $w_j$ is a private vertex.
		
		Choose another $a$ distinct vertices from
		$W_i\setminus\{w_1,\ldots,w_b\}$.
		For each chosen $w$, the family $\mathcal L_{wi}$ has
		$\Omega(n^{k-1})$ edges.
		Avoiding a fixed additional vertex excludes only
		$O(n^{k-2})$ of them.
		We can therefore choose $a$ pairwise disjoint edges
		avoiding the star. In each, choose two $X_i$-vertices
		as its matching core endpoints.
		We have obtained an embedding of $(M_a\cup K_{1,b})^{+k}$ with all core
		vertices in $X_i$.
		Lemma~\ref{h:con:lem:completion} gives a copy of $G^{+k}$,
		a contradiction. This proves the claim.
		
		Fix $x\in X_i$.
		At most $b-1$ vertices of $W_i$ satisfy \eqref{h:con:eq:high-degree}.
		Bound the corresponding values of $d_i(w,x)$ by $\binom n{k-2}$.
		Bound every other term by $f n^{k-3}$.
		Each edge of $\mathcal L_{wi}$ is counted at least twice
		when we sum over $x\in X_i$.
		Since $|W|$ is bounded, it follows that
		\begin{equation}\label{h:con:eq:large-Pi}
		\sum_{w\in W_i}|\mathcal L_{wi}|
		\le\frac{b-1}{2}|X_i|\binom n{k-2}
		+O_{G^{+k}}(n^{k-2}).
		\end{equation}

		For each $w$, delete every edge of $\mathcal L_w$ that uses some $X_i$, $i\in I$, at least twice.
		Call the remaining family $\mathcal L'_w$.
		If $w\notin W_i$, then $|\mathcal L_{wi}|=o(n^{k-1})$.
		There are only boundedly many such pairs $(w,i)$.
		Summing \eqref{h:con:eq:large-Pi} over $i\in I$ and adding
		\eqref{h:con:eq:residual}, we obtain
		\begin{align}
		\C+\sum_{w\in W}|\mathcal L_w\setminus\mathcal L'_w|
		&\le\frac{b-1}{2}
		\left(n+\sum_{i\in I}|X_i|\right)\binom n{k-2}+o(n^{k-1})\nonumber\\
		&\le\left((b-1)D\left(1+\frac {|I|}{k}\right)+o(1)\right)(n/k)^{k-1}.
		\label{h:con:eq:deletion-budget}
		\end{align}
		Here $|X_i|=n/k+o(n)$, and
		$\binom n{k-2}=k^{k-2}(n/k)^{k-2}/(k-2)!+O_k(n^{k-3})$.

		Let $r'_w=|\{i\notin I:w\in W_i\}|$.
		By \eqref{h:con:eq:ordinary-own-part}, all but $o(n^{k-1})$ edges of $\mathcal L'_w$
		avoid the part containing $w$.
		They therefore use only the other $k-1$ parts.
		Types with no repeated part contribute at most $1$ after division by $(n/k)^{k-1}$.
		Only the $r'_w$ specified parts can repeat with a positive limit.
		Every other repeated type contributes $o(n^{k-1})$ edges.
		Lemma~\ref{h:con:lem:patterns} gives
		\[
		|\mathcal L'_w|
		\le\left(1+\frac{Q_{k-1}}{k-1}r'_w+o(1)\right)(n/k)^{k-1}.
		\]
		Moreover,
		$\sum_{w\in W} r'_w=\sum_{i\notin I}|W_i|\le(k-|I|)(a+b-1)$.
		Recall from \eqref{h:eq:main-constants} that
		\begin{equation*}
		\beta=\frac{k}{k-1}Q_{k-1}
		=\frac{k}{k-1}
		\left(\frac{(k-1)^{k-1}}{(k-1)!}-1\right).
		\end{equation*}

		There are only $O_{G^{+k}}(n^{k-2})$ edges meeting $W$ at least twice.
		The number of crossing edges in $H-W$ is at most $t^{(k)}(n-|W|)$.
		Using \eqref{h:con:eq:deletion-budget} and \eqref{h:eq:tk-shift},
		we obtain
		\begin{align*}
		e(H)\le t^{(k)}(n)+\biggl[
		\beta\left(1-\frac {|I|}{k}\right)(a+b-1)
		+(b-1)D\left(1+\frac {|I|}{k}\right)+o(1)\biggr](n/k)^{k-1}.
		\end{align*}
		The coefficient apart from $o(1)$ is an affine function of $|I|/k\in[0,1]$.
		It is therefore at most the larger of its values  at $0$ and $1$, 
		\begin{equation*}
		\max\{\beta(a+b-1)+(b-1)D,\ 2(b-1)D\}.
		\end{equation*}
		Both quantities are strictly smaller than $a\alpha$ by
		\eqref{h:eq:admissible}.  This contradicts \eqref{h:con:eq:j-expansion} for large $n$.
	\end{proof}

	Lemma \ref{h:con:lem:B-nonempty} gives $X\ne\varnothing$.
	The set $W$ was chosen using Lemma~\ref{h:con:lem:bounded-W}(i).
	Thus every edge of $H-W$ is crossing.
	We will prove that $|X|=a$ and $W_i=X$ for every $i$.
	Only two further bounds are needed.
	
	\begin{lemma}\label{h:con:lem:Pi-bound}
		For every $i\in[k]$ and $z\in X$,
		$
		|W_i\setminus\{z\}|\le a-1.
		$
		Consequently, $|W_i|\le a$.
		If $|W_i|=a$, then $X\subseteq W_i$.
	\end{lemma}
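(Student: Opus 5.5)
The plan is to argue by contradiction through Lemma~\ref{h:con:lem:completion}, with the star center placed at the vertex $z\in X$. Fix $i\in[k]$ and $z\in X$, and suppose that $W_i\setminus\{z\}$ contains $a$ distinct vertices $w_1,\ldots,w_a$. We build an embedding of $L^{+k}$, where $L=M_a\sq K_{1,b}$, with every ordinary core image in $X_i\subseteq V_i\setminus X$.

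\emph{The star.} By Theorem~\ref{h:thm:structure}(i), $z$ has $\Omega(n)$ fat neighbours in $V_i$. Only $O(1)$ of these lie in $W$, so we map the star center to $z$ and its $b$ leaves to fat neighbours of $z$ in $X_i$. These star edges are left unexpanded; condition (ii) of Lemma~\ref{h:con:lem:completion} only requires them to be fat pairs.

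\emph{The matching.} For each $w_j$, the family $\mathcal L_{w_ji}$ has $\Omega(n^{k-1})$ edges. Each bounded set of forbidden vertices kills only $O(n^{k-2})$ of them. We can therefore choose edges $e_1,\ldots,e_a$, one at a time, with $e_j\in\mathcal L_{w_ji}$, so that they are pairwise disjoint and avoid the star images and $z$. Each $e_j$ has at least two vertices in $X_i$; use two of them as the core endpoints of the $j$-th matching edge. The remaining $k-2$ vertices of $e_j$, including $w_j$, serve as its private set $Y_{uv}$. Since $e_j\cap W=\{w_j\}$, the only exceptional-set image is $R=\{z\}$.

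\emph{Condition (iii).} This holds because $z$ has $\Omega(n)$ fat neighbours in every $V_j\setminus X$, by Theorem~\ref{h:thm:structure}(i). Lemma~\ref{h:con:lem:completion} then produces a copy of $G^{+k}$, a contradiction. So $|W_i\setminus\{z\}|\le a-1$.

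\emph{The consequences.} Since $X\neq\varnothing$ by Lemma~\ref{h:con:lem:B-nonempty}, we may pick $z\in X$. If $z\in W_i$ we get $|W_i|\le a$ directly. If $z\notin W_i$, then $W_i=W_i\setminus\{z\}$ already has size at most $a-1$. Finally, suppose $|W_i|=a$ and some $z\in X$ lies outside $W_i$. Then $|W_i\setminus\{z\}|=a>a-1$, which is impossible. Hence $X\subseteq W_i$.

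\emph{Main obstacle.} The delicate step is checking the hypotheses of Lemma~\ref{h:con:lem:completion} for the mixed structure. One point is that $w_j$ may lie in $X$ or in $W\setminus X$, but it is only ever used as a private vertex, so this causes no problem. Another is that the private sets must be disjoint from each other and from all core images; the greedy choice with $\Omega(n^{k-1})$ versus $O(n^{k-2})$ handles this. We must also confirm that the case $w_j\in V_i$ ordinary is harmless: \eqref{h:con:eq:ordinary-own-part} shows such $w_j\notin W_i$ anyway, so it never occurs.
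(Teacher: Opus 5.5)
Your proposal is correct and follows essentially the same argument as the paper. Both contradict Lemma~\ref{h:con:lem:completion} by using $z$ as the star center, with its $b$ leaves among its fat neighbours in $X_i$. Both expand $M_a$ through greedily chosen disjoint edges of $\mathcal L_{w_ji}$, each $w_j$ serving as a private vertex, so that $R=\{z\}$; the consequences are then derived exactly as in the paper. The only differences are cosmetic: you embed the star before the matching rather than after, and you add remarks (such as the one about $w_j\in V_i$) that are harmless but not needed.
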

	\begin{proof}
		Suppose that $w_1,\ldots,w_a$ are distinct vertices of
		$W_i\setminus\{z\}$.
		For each $j\in[a]$, the family $\mathcal L_{w_ji}$ has $\Omega(n^{k-1})$ edges.
		Choose an edge from each family.
		Require the chosen edges to be pairwise disjoint.
		They already avoid every vertex of $W$ except their own $w_j$.
		Each additional forbidden vertex excludes at most $O(n^{k-2})$ choices.
		At most $f$ vertices have been used at each step.
		The choices are therefore possible for sufficiently large $n$.
		
		In each chosen edge, choose two vertices of $X_i$ as matching core endpoints.
		Use the other vertices as private vertices.
		In particular, each $w_j$ is private.
		This is possible because $k\ge3$.
		The resulting $a$ edges form an expansion of $M_a$.
		
		Use $z$ as the star center.
		It has $\Omega(n)$ fat neighbors in $X_i$.
		Choose $b$ distinct such neighbors as leaves, avoiding all selected matching edges.
		All core vertices of $M_a\cup K_{1,b}$ except $z$ now lie in $X_i$. The  core vertex  $z$ has $\Omega(n)$ neighbors in every other part.
		Lemma~\ref{h:con:lem:completion} gives a copy of $G^{+k}$, a contradiction.
		This proves $|W_i\setminus\{z\}|\le a-1$.
		
		Since $X$ is nonempty, $|W_i\setminus\{z\}|\le a-1$ gives $|W_i|\le a$.
		If $|W_i|=a$ and some $z\in X$ were outside $W_i$, then
		$|W_i\setminus\{z\}|=a$.
		This contradicts the same inequality.
	\end{proof}
	
	Fix $i\in[k]$.
	For a tuple $\mathbf x=(x_j)_{j\ne i}\in\prod_{j\ne i}X_j$, write
	$
	D_i(\mathbf x)=X\cap\bigcap_{j\ne i}N_{\mathsf{F}(H)}(x_j).
	$
	This is the set of exceptional vertices adjacent to every vertex of the tuple.
	
	\begin{lemma}\label{h:con:lem:common-neighbors}
		For each $i\in[k]$, all but $o(n^{k-1})$ tuples satisfy
		$|D_i(\mathbf x)|\le a$.
		In particular,
		\begin{equation}\label{h:con:eq:common-sum}
		\sum_{\mathbf x\in\prod_{j\ne i}X_j}|D_i(\mathbf x)|
		\le\bigl(\min\{a,|X|\}+o(1)\bigr)(n/k)^{k-1}.
		\end{equation}
	\end{lemma}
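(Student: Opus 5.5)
We argue by contradiction, using the same template as Lemma~\ref{h:con:lem:Pi-bound}. The goal is to show that $a+1$ exceptional vertices which are common fat neighbours of a positive proportion of tuples produce a copy of $G^{+k}$.

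Suppose that for some fixed $i$ there are $\varepsilon n^{k-1}$ tuples $\mathbf x\in\prod_{j\ne i}X_j$ with $|D_i(\mathbf x)|\ge a+1$, along an infinite subsequence. Since $|X|$ is bounded, we can pigeonhole over the boundedly many subsets of $X$. This gives a fixed set $R\subseteq X$ with $|R|=a+1$ and $\Omega(n^{k-1})$ tuples $\mathbf x$ with $R\subseteq D_i(\mathbf x)$.

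From these tuples we extract a complete $(k-1)$-partite configuration in $\mathsf F(H)$. Every vertex of $R$ must have $\Omega(n)$ fat neighbours in each $X_j$, $j\ne i$, and in fact the common $\mathsf F(H)$-neighbourhood of $R$ in each $X_j$ has size $\Omega(n)$. This holds because the set of tuples lying entirely in common neighbourhoods has size equal to the product of the common-neighbourhood sizes, and that product is $\Omega(n^{k-1})$. Hence condition (iii) of Lemma~\ref{h:con:lem:completion} holds for $R$.

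Next we build $L^{+k}=(M_a\sq K_{1,b})^{+k}$ with all non-$R$ core vertices in $X_i$ and with $R$ as the set of images in $X$. By Theorem~\ref{h:thm:structure}(i), each $z\in R$ has $\Omega(n)$ fat neighbours in $X_i$. Choose one $z\in R$ as the star centre and take $b$ fresh fat neighbours of $z$ in $X_i$ as leaves. For each of the other $a$ vertices $w\in R$, take one fresh fat neighbour $y_w\in X_i$ and map a matching edge to $wy_w$. All these edges of $L$ are fat, and the images are distinct because every choice has $\Omega(n)$ options. Lemma~\ref{h:con:lem:completion} now extends this to a copy of $G^{+k}$, contradicting that $H$ is $G^{+k}$-free. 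Hence all but $o(n^{k-1})$ tuples satisfy $|D_i(\mathbf x)|\le a$.

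For \eqref{h:con:eq:common-sum}, we bound $|D_i(\mathbf x)|\le\min\{a,|X|\}$ on the typical tuples. On the $o(n^{k-1})$ exceptional tuples we use the bound $|X|=O(1)$. We also use $\prod_{j\ne i}|X_j|=(1+o(1))(n/k)^{k-1}$, which follows from Lemma~\ref{h:lem:balance}.

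\textbf{Main obstacle.} The delicate step is showing that a positive density of tuples containing $R$ forces each common neighbourhood to be linear. The point is that if the count of tuples is $\Omega(n^{k-1})$, then every factor of the product must be $\Omega(n)$, since each factor is at most $n$.

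A second check is needed when $b$ leaves are required but $|R|$ is only $a+1$. This is handled by taking the star centre in $R$ with leaves in $X_i$, and does not require $R$ to supply the leaves.
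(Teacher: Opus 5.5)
Your proposal is correct and follows the paper's argument. The paper shows that for each fixed $(a+1)$-set $R\subseteq X$ the number of tuples in the common fat neighbourhoods, $\prod_{j\ne i}|C_j(R)|$, is $o(n^{k-1})$: otherwise every factor is linear, and Lemma~\ref{h:con:lem:completion}, with the star centre and $a$ matching endpoints in $R$ and the other core vertices in $X_i$, yields $G^{+k}$. It then sums over the boundedly many $R$. Your pigeonhole-by-contradiction phrasing is just the contrapositive of that summation step.
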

	\begin{proof}
		For $R\subseteq X$ with $|R|=a+1$, let
		$
		C_j(R)=X_j\cap\bigcap_{z\in R}N_{\mathsf{F}(H)}(z).
		$
		We first show that
		\begin{equation}\label{h:con:eq:common-product-small}
		\prod_{j\ne i}|C_j(R)|=o(n^{k-1}).
		\end{equation}
		If this were false, there would be a constant $\varepsilon>0$ and a further
		subsequence on which the product is at least $\varepsilon n^{k-1}$.
		Hence $|C_j(R)|\ge\varepsilon n$ for every $j\ne i$ on that subsequence.
		
		Use one vertex of $R$ as the star center.
		Use each of the other $a$ vertices as one matching core endpoint.
		Every vertex of $R$ has $\Omega(n)$ fat neighbors in $X_i$.
		We can therefore choose distinct vertices of $X_i$ for the remaining $a$ matching vertices
		and the $b$ star leaves.
		All  such core edges  are fat pairs.
		In every other part, the exceptional core vertices have the common
		neighborhood $C_j(R)$ of linear size.
		Lemma~\ref{h:con:lem:completion} gives $G^{+k}$, a contradiction.
		This proves \eqref{h:con:eq:common-product-small}.
		
		If $|D_i(\mathbf x)|\ge a+1$, then some $(a+1)$-set $R\subseteq X$
		is contained in $D_i(\mathbf x)$.
		For fixed $R$, the number of such tuples is exactly
		$\prod_{j\ne i}|C_j(R)|$.
		There are only boundedly many choices of $R$.
		Summing \eqref{h:con:eq:common-product-small} over them proves the first assertion.
		If $|X|\le a$, there are no such sets $R$ and the assertion is immediate.
		
		There are $\prod_{j\ne i}|X_j|=(1+o(1))(n/k)^{k-1}$ tuples in total.
		At most $o(n^{k-1})$ have more than $a$ common neighbors in $X$.
		Each tuple has at most $|X|$ common neighbors in $X$, and $|X|$ is bounded.
		Thus the sum in \eqref{h:con:eq:common-sum} is at most $(a+o(1))(n/k)^{k-1}$.
		The trivial bound on the same sum is $(|X|+o(1))(n/k)^{k-1}$.
		Taking the smaller bound proves the result.
	\end{proof}

	Recall that $Q_k=\frac{k^{k-1}}{(k-1)!}-k$ and  $\alpha=Q_k+k-1$.
	Also $Q_k>0$ because $k\ge3$.
	For example, among $(k-1)$-sets from $k$ large parts, some types use one part twice.
	Their positive coefficients contribute to $Q_k$ in Lemma~\ref{h:con:lem:patterns}.
	
	\begin{lemma}\label{h:con:lem:total-edge-bound}
		For every extremal $G^{+k}$-free $k$-graph $H$, the notation above gives
		\begin{equation}\label{h:con:eq:total-edge-bound}
		e(H)\le t^{(k)}(n)+
		\left[k\min\{a,|X|\}-|X|+
		\frac {Q_k} {k}\sum_{i=1}^k|W_i|+o(1)\right](n/k)^{k-1}.
		\end{equation}
		Consequently, $
		|X|=a$ and $W_i=X$ for every $i\in[k]$.
	\end{lemma}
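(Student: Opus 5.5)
We decompose the edges of $H$ according to how they meet $W$, and then compare with \eqref{h:con:eq:j-expansion}. Edges meeting $W$ at least twice number $O(n^{k-2})$. Every edge of $H-W$ is crossing by Lemma~\ref{h:con:lem:bounded-W}(i), so there are at most $t^{(k)}(n-|W|)=t^{(k)}(n)-|W|(n/k)^{k-1}+O(n^{k-2})$ of them, by \eqref{h:eq:tk-shift}. It remains to bound $\sum_{w\in W}|\mathcal L_w|$.

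We split $\mathcal L_w$ into edges whose other $k-1$ vertices meet each part at most once (crossing type) and edges using some $X_i$ twice. For the repeated types, only pairs with $w\in W_i$ contribute positively. Lemma~\ref{h:con:lem:patterns} with $\ell=k$ and $r=r_w:=|\{i:w\in W_i\}|$ bounds these by $(\frac{r_w}{k}Q_k+o(1))(n/k)^{k-1}$. Summing over $w$ gives $\frac{Q_k}{k}\sum_i|W_i|$.

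For the crossing-type $(k-1)$-sets, each $(k-1)$-set $T$ misses exactly one part $i$ and corresponds to a tuple $\mathbf x\in\prod_{j\ne i}X_j$. All but $O(n^{k-2})$ edges $\{w\}\cup T$ have every pair $wx_j$ fat, by Lemma~\ref{h:lem:fat}(i). We split $W$ into $X$ and $W\setminus X$.
\begin{itemize}
\item For $w\in W\setminus X$ lying in $V_j$, \eqref{h:con:eq:ordinary-own-part} forces $T$ to avoid $X_j$, so $w$ contributes at most $(1+o(1))(n/k)^{k-1}$.
\item For $w\in X$, summing over $i$ and using Lemma~\ref{h:con:lem:common-neighbors}, the count is at most $\sum_i\sum_{\mathbf x}|D_i(\mathbf x)|\le(k\min\{a,|X|\}+o(1))(n/k)^{k-1}$.
\end{itemize}
Adding everything, the $|W\setminus X|$ contributions cancel against $-|W|$, leaving $-|X|$. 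This gives \eqref{h:con:eq:total-edge-bound}.

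For the consequence, Lemma~\ref{h:con:lem:Pi-bound} gives $|W_i|\le a$. Then the bracket is at most $k\min\{a,|X|\}-|X|+Q_ka$. This is maximized at $|X|=a$ with value $(k-1)a+Q_ka=a\alpha$, and it is strictly smaller if $|X|\ne a$: for $|X|<a$ it equals $(k-1)|X|+Q_ka$, and for $|X|>a$ it equals $ka-|X|+Q_ka$. Comparing with the lower bound $a\alpha$ from \eqref{h:con:eq:j-expansion}, extremality forces $|X|=a$ and $\sum_i|W_i|=ka$. Hence $|W_i|=a$ for all $i$, and Lemma~\ref{h:con:lem:Pi-bound} gives $X\subseteq W_i$, so $W_i=X$.

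The main obstacle is the crossing-type bookkeeping for $w\in X$. We need that an edge $\{w\}\cup T$ with all pairs fat really places $w\in D_i(\mathbf x)$, and that the $k$ choices of the missing part $i$ partition the crossing-type $(k-1)$-sets without double counting. Both hold because each crossing-type $T$ misses exactly one part.
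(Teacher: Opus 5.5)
Your proposal is correct and follows the paper's proof essentially step for step. You use the same split of edges by how they meet $W$, the same appeal to \eqref{h:con:eq:ordinary-own-part} for $W\setminus X$, Lemma~\ref{h:con:lem:common-neighbors} for the crossing types at $X$, Lemma~\ref{h:con:lem:patterns} with $\ell=k$ for the repeated types, and Lemma~\ref{h:con:lem:Pi-bound} together with \eqref{h:con:eq:j-expansion} for the consequence. The one point you leave implicit is that forcing $\sum_i|W_i|=ka$ uses $Q_k>0$, together with the fact that $|X|$ and $|W_i|$ are fixed integers along the subsequence, so any deficit is a fixed positive constant; the paper states both explicitly.
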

	\begin{proof}
		We count the edges of $H$ according to how they meet $W$.
		Edges avoiding $W$ are crossing, so there are at most $t^{(k)}(n-|W|)$ of them. Since $W$ is bounded,
		only $O_{G^{+k}}(n^{k-2})$ edges meet $W$ at least twice.
		It remains to count edges in the disjoint families $\mathcal L_w$.
		
		First count those for which the $k-1$ vertices outside $W$ lie in different parts.
		For   $w\in (W\setminus X)\cap V_j$, equation~\eqref{h:con:eq:ordinary-own-part}
		shows that only $o(n^{k-1})$ such edges use $X_j$.
		Every other edge of this type chooses one vertex from each $X_i$ with $i\ne j$.
		There are at most $(1+o(1))(n/k)^{k-1}$ choices.
		The $|W|-|X|$  vertices of $W$ therefore contribute at most
		$(|W|-|X|+o(1))(n/k)^{k-1}$ edges of this type.
		Now consider  $x\in X$.
		By Lemma~\ref{h:lem:fat}(i), only $O_{G^{+k}}(n^{k-2})$ such edges
		contain a non-fat pair between $x$ and a vertex of $V(H)\setminus W$.
		For every remaining edge, its outside vertices form a tuple in
		$\prod_{m\ne i}X_m$ for a unique omitted part $i$, where  $x\in D_i(\mathbf x)$.
		Summing over the $k$ choices of $i$ and applying Lemma~\ref{h:con:lem:common-neighbors}
		bounds this contribution by
		$
		\bigl(k\min\{a,|X|\}+o(1)\bigr)(n/k)^{k-1}.
		$
		
		Next count edges in $\mathcal L_w$ whose outside vertices use some part twice.
		Let $r_w=|\{i:w\in W_i\}|$.
		If $w\notin W_i$, then $|\mathcal L_{wi}|=o(n^{k-1})$.
		After omitting these edges, only the $r_w$ specified parts may repeat.
		Lemma~\ref{h:con:lem:patterns} bounds this contribution by
		$
		\left(\frac {Q_k}{k} r_w+o(1)\right)(n/k)^{k-1}.
		$
		Since $\sum_{w\in W}r_w=\sum_{i\in [k]}|W_i|$, all repeated types contribute at most
		$
		\left(\frac {Q_k}{k}\sum_{i\in [k]}|W_i|+o(1)\right)(n/k)^{k-1}.
		$
		
		Add the bounds and use \eqref{h:eq:tk-shift}.
		This gives \eqref{h:con:eq:total-edge-bound}.
		By Lemma~\ref{h:con:lem:Pi-bound}, each $|W_i|$ is at most $a$.
		Also
		\[
		k\min\{a,|X|\}-|X|=
		\begin{cases}
		(k-1)|X|,&|X|\le a,\\
		ka-|X|,&|X|\ge a.
		\end{cases}
		\]
		This expression is at most $(k-1)a$.
		Equality holds only for $|X|=a$.
		Thus apart from $o(1)$, the coefficient in \eqref{h:con:eq:total-edge-bound} is at most
		$
		(k-1)a+Q_ka=a\alpha.
		$
		The lower bound \eqref{h:con:eq:j-expansion} has this same coefficient.
		The numbers $|X|$ and $|W_i|$ are fixed along the chosen subsequence.
		If $|X|\ne a$ or  some $|W_i|<a$, then $k\min\{a,|X|\}-|X|+
		\frac {Q_k} {k}\sum_{i=1}^k|W_i|<a\alpha -c$ for a  suitable constant $c$ because $Q_k>0$.
		Either case contradicts \eqref{h:con:eq:j-expansion} for large $n$.
		Hence $|X|=a$ and $|W_i|=a$ for every $i$.
		Lemma~\ref{h:con:lem:Pi-bound} now gives $X= W_i$.
	\end{proof}
	
	\subsection{Exact uniqueness}
	\begin{proof}[Proof of Theorem~\ref{h:thm:connected-exact}]
		Define $\widetilde J=J(X;V_1\setminus X,\ldots,V_k\setminus X).$
		Lemma \ref{h:con:lem:total-edge-bound} gives $|X|=a$.
		We first count the edges of $E(H)\setminus E(\widetilde J)$.
		Every edge of $H$ avoiding $W$ is crossing and belongs to $\widetilde J$.
		Every edge meeting $X$ also belongs to $\widetilde J$.
		There are only $O_{G^{+k}}(n^{k-2})$ edges meeting $W$ at least twice.
		
		Consider an edge meeting $W\setminus X$ exactly at one vertex $w\in V_j$.
		Equation~\eqref{h:con:eq:ordinary-own-part} bounds the number of these edges
		that also meet $X_j$ by $o(n^{k-1})$.
		Since $W_i=X$ for every $i$, we have $w\notin W_i$ for every $i$.
		Thus only $o(n^{k-1})$ of these edges use any $X_i$ twice.
		Every remaining edge chooses one vertex from each part other than $V_j$.
		It is crossing and belongs to $\widetilde J$.
		Since $|W|$ is bounded, these  give
		$
		|E(H)\setminus E(\widetilde J)|=o(n^{k-1}).
		$

		Lemma~\ref{h:con:lem:jfree} and the maximal product of balanced part sizes give
		$
		e(H)\ge e(\Ja(n))\ge e(\widetilde J).
		$
		Therefore
		\begin{align}
		|E(\widetilde J)\setminus E(H)|
		&=e(\widetilde J)-e(H)+|E(H)\setminus E(\widetilde J)|\nonumber\\
		&\le |E(H)\setminus E(\widetilde J)|=o(n^{k-1}).
		\label{h:con:eq:fine-closeness}
		\end{align}
		
		All  parts $V_i\setminus X$ of $\widetilde J$ have size $n/k+o(n)$.
		Apply Lemma~\ref{h:con:lem:local}(ii) with $\eta=1/(2k)$.
		Equation~\eqref{h:con:eq:fine-closeness} is smaller than the fixed threshold
		$c_1n^{k-1}$ for all sufficiently large members of the sequence.
		Hence $H\subseteq\widetilde J$.
		It follows that
		$
		e(\Ja(n))\le e(H)\le e(\widetilde J)\le e(\Ja(n)).
		$
		All inequalities are equalities, so $H=\widetilde J$ and $e(\Ja(n))= e(\widetilde J)$. The maximal product of balanced part sizes gives $H\cong\Ja(n)$.
		
		The  part sizes must be balanced.
		Otherwise, two part sizes $u,v$ would satisfy $u\ge v+2$.
		Moving one vertex from the larger part to the smaller would increase their product by
		$(u-1)(v+1)-uv=u-v-1>0$.
		The other part sizes are positive, so this would increase the number of edges.
		This contradicts equality with $e(\Ja(n))$.
		Thus $H\cong\Ja(n)$.
		This contradicts the choice of our sequence.

		The exact extremal formula and uniqueness follow for every sufficiently large $n$.
		The last step needs the error $o(n^{k-1})$ in \eqref{h:con:eq:fine-closeness}.
		An error of size $o(n^k)$ alone would not allow the use of Lemma~\ref{h:con:lem:local}.
		Lemma~\ref{h:con:lem:local}(i) gives the claimed one-edge count.
		Proposition~\ref{h:prop:stability} gives stability, since
		$\chi(G)=k+1$. This completes Theorem~\ref{h:thm:connected-exact}.
	\end{proof}
	
	\section{Hypergraph supersaturation}\label{h:sec:criterion}
	
	For a graph $Q$, let $\rank(Q)$ be the number of edges
	in a spanning forest of $Q$. Set $\rank(\varnothing)=0$.
	Equivalently, this is the largest number of edges in a forest
	contained in $Q$. Thus rank cannot decrease when vertices or edges are added.
	
	The next theorem separates the copy count from the exact extremal
	problem. Its hypotheses will be checked for the full joins and for their extensions.
	
	\begin{theorem}\label{h:thm:criterion}
		Let $k\ge3$, let $G$ be a fixed graph, and let 
		$f=v(G^{+k})$. Suppose that for fixed integers $A\ge1$ and $\rho$,
		the following hold for all sufficiently large $n$:
		\begin{enumerate}[label=(H\arabic*)]
			\item the unique extremal $G^{+k}$-free $k$-graph is $\J_A(n)$;
			\item $c(n,G^{+k})\ge c_*n^{f-A-k}$ for a constant $c_*>0$;
			\item every $U\subseteq V(G)$ with $\chi(G-U)\le k-1$ satisfies
			$\rank(G[U])\ge\rho$.
		\end{enumerate}
		Let $\tau=\rho-A-1$. If $\tau>k-1$, let
		$\gamma=\tau/(\rho-1)$. Then $0<\gamma<1$, and
		there exists a constant $C>0$ such that\begin{equation}\label{h:eq:general-ratio}
		\frac{h_{G^{+k}}(n,q)}{q c(n,G^{+k})}
		\le C\left(\frac{n^{k-1-\tau}}q+
		\frac{q^{\rho-1}}{n^{(k-1)(\rho-1)+\tau}}\right)
		\end{equation}
		uniformly for all sufficiently large $n$ and all integers
		$1\le q\le n^{k-1+\gamma}$.
	\end{theorem}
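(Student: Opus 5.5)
The plan is to imitate the graph argument of Section~\ref{sec:graph-supersaturation}. We build an explicit $n$-vertex $k$-graph $H'$ with $\ex(n,G^{+k})+q$ edges, bound $N_{G^{+k}}(H')$ from above, and divide by the lower bound (H2), $q\,c(n,G^{+k})\ge c_*qn^{f-A-k}$. Since $h_{G^{+k}}(n,q)\le N_{G^{+k}}(H')$, this bounds the ratio in \eqref{h:eq:general-ratio}.

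\textbf{Construction.} By (H1) we start from $\J_A(n)=J(S;V_1,\ldots,V_k)$ with $|S|=A$ and balanced parts. We add the $q$ new edges in a clustered way inside one part. Set $s:=\max\{2,\lceil q/n^{k-1}\rceil\}$. Since $q\le n^{k-1+\gamma}$ and $\gamma<1$, we have $s=o(n)$. Partition (most of) $V_1$ into blocks $B_1,B_2,\ldots$ of size $s$. We add edges of the form $\{x,y\}\cup\{z_2,\ldots,z_{k-1}\}$ with $x,y$ in a common block and $z_j\in V_j$, filling blocks one after another until exactly $q$ edges have been added. Each block supports $\Theta(s^2n^{k-2})$ such edges, so we use $\Theta(q/(s^2n^{k-2})+1)$ blocks. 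When $q<n^{k-1}$ we take $s=2$ and use only $\Theta(q/n^{k-2})$ blocks. Every added edge is noncrossing and avoids $S$, so $e(H')=\ex(n,G^{+k})+q$. Within a block, a pair lies in $\Theta(n^{k-2})$ added edges, hence it is fat for large $n$.

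\textbf{Counting copies.} Fix an embedding $\varphi$ of $G^{+k}$ into $H'$. Let $P$ be the set of core edges whose expansions are mapped to added edges, and let $U$ be the set of core vertices incident with $P$, together with the core vertices mapped into $S$. The edges outside $P$ live in $\J_A(n)$, and $\J_A(n)-S$ is $k$-partite with $V_1$ independent there. The key step is to show that $G-U$ is $(k-1)$-colourable, by colouring along the parts $V_2,\ldots,V_k$ and merging the $V_1$ vertices into one class. Then (H3) gives $\rank(G[U])\ge\rho$.

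Next we relate the rank to a count. The added core pairs lie inside blocks, and we write $r:=\rank(P)$ for the rank of the graph they span. A core vertex mapped to $S$ can raise the rank of $G[U]$ by merging components, and we must bound this effect. I expect to show that the $j\le A$ vertices placed in $S$ lose one factor of $n$ each. Counting also the components of $P$ then gives, for the number of such embeddings,
\[
O\bigl(n^{f-j-k(\#P)}\cdot n^{(k-1)(\#P)}\cdot(s/n)^{\,r}\cdots\bigr).
\]
The constraint $\rank(G[U])\ge\rho$ then forces the loss to be at least that of $r+j\ge\rho$ with $r\ge\rho-A$. The extreme case is $j=A$ with a rank-$(\rho-A)$ forest of added pairs in a single block. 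This should give
\[
N_{G^{+k}}(H')=O\bigl(n^{f-\rho}+n^{f-A-k}\,q^{\rho-1}n^{-(k-1)(\rho-1)-\tau+k-1}\cdots\bigr).
\]
The precise accounting needs care. The term $-A-1$ in $\tau=\rho-A-1$ indicates that, relative to $c(n,G^{+k})$, the $A$ vertices of $S$ and one additional component each save one factor. The two regimes $s=O(1)$ and $s\asymp q/n^{k-1}$ produce the two terms $n^{k-1-\tau}/q$ and $q^{\rho-1}/n^{(k-1)(\rho-1)+\tau}$ respectively. The hypothesis $\tau>k-1$ makes the first term $o(1)$, and $\gamma=\tau/(\rho-1)$ is exactly the threshold at which the second term becomes $O(1)$, which also explains the range of $q$. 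We will check $0<\gamma<1$ directly from $k-1<\tau<\rho-1$.

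\textbf{Main obstacle.} The hard step is the combinatorial inequality that converts $\rank(G[U])\ge\rho$ into a lower bound on the exponent loss of every embedding type. It has to handle vertices in $S$, which may merge several forest components at once, and added pairs spread over several blocks, which cost extra factors $n/s$. We will first prove a general summation lemma: an embedding whose added pairs have $c$ block-components and total rank $r$, using $j$ vertices of $S$, contributes $O(n^{f-j-r-c\cdot 0}s^{r})$ up to the crossing-edge normalisation. We will then minimise the exponent over $r+j\ge\rho-?$, determining the correction from the case $G[U]$ minus the $S$-vertices. If that bookkeeping fails, we fall back to bounding each $S$-vertex's contribution to the rank by one per component it merges, and absorb the excess into the factor $n/s\ge1$.
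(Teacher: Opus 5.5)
\textbf{Gap: building $H'$ on top of $\J_A(n)$ cannot work.} Your $H'$ contains $\J_A(n)$ plus added edges $e_1,\ldots,e_q$. By (H1), each $\J_A(n)+e_i$ contains at least $c(n,G^{+k})$ copies of $G^{+k}$. Every one of these copies uses $e_i$ and no other added edge, so the families for different $i$ are disjoint (this is the observation $q\,c(n,F)\le t_F(n,q)$ from the introduction). Hence $N_{G^{+k}}(H')\ge q\,c(n,G^{+k})$, and your ratio is always at least $1$. But at $q=n^{k-1}$, which lies in the allowed range, the right-hand side of \eqref{h:eq:general-ratio} equals $2Cn^{-\tau}\to0$. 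So no bookkeeping can rescue this construction.

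The same failure appears inside your counting. The colouring step is misstated: a core vertex mapped to $V_1$ but not incident with an added pair may have neighbours in every $V_j$, $j\ge2$. It cannot be merged into one of those $k-1$ classes, so $U$ must contain every core vertex mapped to $V_1\cup S$. Then (H3) bounds $\rank(G[U])$, but the edges from $S$-images to $V_1$-images are free, because every $k$-set meeting $S$ is an edge. Only the rank of the $V_1$-part is paid for by factors $s/n$, and with $A$ core vertices in $S$ that rank can be $1$. This is exactly how the $c(n,G^{+k})$ copies in the proof of Lemma~\ref{h:con:lem:local}(i) arise. So your claimed constraint $r\ge\rho-A$ is false, and the fallback you describe goes the wrong way.

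\textbf{The missing idea: abandon the extremal structure entirely.} The paper proceeds as follows.
\begin{enumerate}
\item Start from the balanced complete $k$-partite graph on $X_1,\ldots,X_k$, with no set $S$.
\item Split all of $X_1$ into blocks of size $z$ and make each block a clique.
\item Let $H_z$ be the $k$-clique hypergraph of the resulting graph $P_z$.
\end{enumerate}
Each block pair yields $\Omega(n^{k-2})$ new noncrossing edges, so $e(H_z)\ge t^{(k)}(n)+d_k(z-1)n^{k-1}$. Choose $z=2+\lceil d_k^{-1}(A/(k-1)!+q/n^{k-1})\rceil=O(1+q/n^{k-1})$. This pays both for the surplus $\binom nk-\binom{n-A}k\le\frac{A}{(k-1)!}n^{k-1}$ of $\J_A(n)$ over $t^{(k)}(n)$ and for the $q$ extra edges. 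Then delete edges to reach exactly $\ex(n,G^{+k})+q$.

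Now the core of any copy is a copy of $G$ in $P_z$. Let $U$ be the set of core vertices in $X_1$. Then $\chi(G-U)\le k-1$, so (H3) gives $\rank(G[U])\ge\rho$, and each component of $G[U]$ lies inside one block. This yields $N_{G^{+k}}(H_z)\le k^{v(G)}n^{f-\rho}z^{\rho}$. Dividing by $c_*qn^{f-A-k}$ gives $C\,n^{k-1-\tau}q^{-1}(1+q/n^{k-1})^{\rho}$. Finally, $(1+x)^\rho\le2^{\rho-1}(1+x^\rho)$ produces both terms of \eqref{h:eq:general-ratio}. Note that the $A$ in $\tau=\rho-A-1$ enters only through the normalisation by $c(n,G^{+k})$, not through vertices placed in $S$.
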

	
	\begin{proof}
		Start with a balanced complete $k$-partite  graph on
		$X_1,\ldots,X_k$. Split $X_1$ into blocks of size $z$, with
		at most one smaller block. Add all edges within each block.
		Call the resulting graph $P_z$, and let $H_z$ be its $k$-clique
		hypergraph. Thus a $k$-set is an edge of $H_z$ exactly when it
		is a clique in $P_z$.
		
		Suppose $2\le z\le |X_1|/2$, and let
		$|X_1|\ge n/(2k)$. At least $|X_1|/(2z)$ blocks have size
		$z$. They contain at least $n(z-1)/(8k)$ internal pairs.
		For each pair, choose $k-2$ other parts and one vertex in each.
		This gives at least $(k-1)(n/(2k))^{k-2}$ new hyperedges per
		pair. Each has exactly two vertices in $X_1$, so there is no
		double counting. Together with all crossing edges, this gives
		\begin{equation}\label{h:eq:block-edge-count}
		e(H_z)\ge t^{(k)}(n)+d_k(z-1)n^{k-1},
		\qquad d_k=\frac{k-1}{8k(2k)^{k-2}}.
		\end{equation}

		The core of a copy of $G^{+k}$ in $H_z$ is an
		edge-preserving copy of $G$ in $P_z$. Fix the part assigned to
		each core vertex. There are at most $k^{v(G)}$ assignments.
		Let $U$ be the vertices assigned to $X_1$. Since the other parts
		are independent, any possible assignment has
		$\chi(G-U)\le k-1$. Write $r_U=\rank(G[U])\ge\rho$.
		Every connected component of $G[U]$ must lie in a single block.
		Choose one root image in each component in at most $n$ ways.
		Every other image in that component has at most $z$ choices.
		Each core vertex outside $U$ has at most $n$ choices.
		The number of core maps for this assignment is therefore at most
		$
		n^{v(G)-r_U}\cdot z^{r_U}\le n^{v(G)-\rho}\cdot z^\rho.
		$
		The private vertices have at most
		$n^{(k-2)e(G)}$ choices.  Thus
		\begin{equation}\label{h:eq:block-copy-count}
		N_{\F}(H_z)\le k^{v(G)}n^{f-\rho}z^\rho.
		\end{equation}

		By (H1) and \eqref{h:con:eq:j-expansion},
		$
		\ex(n,G^{+k})-t^{(k)}(n)
		\le\binom nk-\binom{n-A}k
		\le\frac{A}{(k-1)!}n^{k-1}.
		$
		Take
		\begin{equation*}
		z=2+\left\lceil d_k^{-1}
		\left(\frac{A}{(k-1)!}+\frac q{n^{k-1}}\right)\right\rceil.
		\end{equation*}
		Then $z=O(1+q/n^{k-1})$. Since
		$\rho-1-\tau=A>0$, we have $0<\gamma<1$.
		Uniformly for $q\le n^{k-1+\gamma}$, the chosen block size is
		$O(n^\gamma)=o(n)$. Hence $z\le|X_1|/2$ for all large $n$.
		Equation~\eqref{h:eq:block-edge-count} gives
		$e(H_z)\ge\ex(n,G^{+k})+q$. Delete edges to reach equality.
		This cannot increase the number of copies.

		Using (H2) and \eqref{h:eq:block-copy-count}, we obtain a constant $C>0$ such that
		\begin{align*}
		\frac{h_{G^{+k}}(n,q)}{q c(n,G^{+k})}
		&\le C\frac{n^{f-\rho}}{q n^{f-A-k}}
		\left(1+\frac q{n^{k-1}}\right)^\rho
		=C\frac{n^{k-1-\tau}}q
		\left(1+\frac q{n^{k-1}}\right)^\rho.
		\end{align*}
		The inequality $(1+x)^\rho\le2^{\rho-1}(1+x^\rho)$ gives
		\eqref{h:eq:general-ratio}.
	\end{proof}

	For a graph $Q$, define
	\begin{equation*}
	r_k(Q)=\min\{\rank(Q[U]):U\subseteq V(Q),\ \chi(Q-U)\le k-1\}.
	\end{equation*}
	The empty graph has chromatic number zero. This parameter measures
	the number of core choices restricted to small blocks in the proof
	of Theorem~\ref{h:thm:criterion}.
	
	\begin{lemma}\label{h:lem:rank-monotone}
		If $Q_1\subseteq Q_2$, then $r_k(Q_1)\le r_k(Q_2)$.
	\end{lemma}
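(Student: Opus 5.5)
The plan is to argue directly from the definition of $r_k$ by restricting witnesses from $Q_2$ to $Q_1$. Here $Q_1\subseteq Q_2$ is read as $Q_1$ being a subgraph: $V(Q_1)\subseteq V(Q_2)$ and $E(Q_1)\subseteq E(Q_2)$.

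First, I would check that the minimum defining $r_k(Q_2)$ is taken over a nonempty family. The choice $U=V(Q_2)$ leaves the empty graph, which by convention has chromatic number zero. So we may fix a minimizer $U_2\subseteq V(Q_2)$ with $\chi(Q_2-U_2)\le k-1$ and $\rank(Q_2[U_2])=r_k(Q_2)$.

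Next, set $U_1=U_2\cap V(Q_1)$. This set is admissible for $Q_1$. Indeed, $Q_1-U_1$ has vertex set $V(Q_1)\setminus U_2\subseteq V(Q_2)\setminus U_2$, and its edges are edges of $Q_2$ between such vertices. Hence $Q_1-U_1$ is a subgraph of $Q_2-U_2$. Restricting a proper $(k-1)$-coloring of $Q_2-U_2$ therefore gives $\chi(Q_1-U_1)\le k-1$.

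Finally, $Q_1[U_1]$ is a subgraph of $Q_2[U_2]$. Any forest in $Q_1[U_1]$ is thus a forest in $Q_2[U_2]$. By the description of $\rank$ as the largest number of edges in a contained forest, which the paper notes is monotone under adding vertices or edges, we get $\rank(Q_1[U_1])\le\rank(Q_2[U_2])$. It follows that
\[
r_k(Q_1)\le\rank(Q_1[U_1])\le\rank(Q_2[U_2])=r_k(Q_2).
\]

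The argument is routine. The only point needing care is that $U_1$ must be intersected with $V(Q_1)$, so that it is a legitimate subset. This is also why the empty-graph convention matters: it guarantees admissible sets always exist, including when $U_1=V(Q_1)$.
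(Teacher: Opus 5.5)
Your proposal is correct and follows the paper's proof essentially verbatim: you take a minimizer $U$ for $Q_2$, restrict it to $U\cap V(Q_1)$, observe that $Q_1-U_1\subseteq Q_2-U$ and $Q_1[U_1]\subseteq Q_2[U]$, and conclude by monotonicity of $\rank$. Your check that the feasible family is nonempty, via the empty-graph convention, is a small addition the paper leaves implicit.
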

	\begin{proof}
		Take a minimizing set $U$ for $Q_2$ and let $U_1=U\cap V(Q_1)$.
		Then $Q_1-U_1\subseteq Q_2-U$, so $U_1$ is feasible for $Q_1$.
		Also $Q_1[U_1]\subseteq Q_2[U]$, and the rank of a subgraph
		is at most that of the larger graph. Thus
		$r_k(Q_1)\le\rank(Q_1[U_1])\le\rank(Q_2[U])=r_k(Q_2)$.
	\end{proof}
	
	\begin{proposition}\label{h:prop:rank-exact}
		Let $a\ge1$, $b\ge1$, $h\ge0$, and
		$1\le m_1\le\cdots\le m_{k-1}$, with $k\ge3$.
		Let $G=(M_a\sq K_{1,b}\sq I_{h})\vee K_{m_1,\ldots,m_{k-1}}$. Then
		\begin{equation}\label{h:eq:rank-exact}
		r_k(G)=\min\{a+b,\ a+m_1,\ m_1+m_2-1\}.
		\end{equation}
		In particular,  this minimum equals $a+b$ under \eqref{h:eq:admissible}.
	\end{proposition}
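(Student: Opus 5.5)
We prove \eqref{h:eq:rank-exact} by matching upper and lower bounds. Write $A=V(M_a\sq K_{1,b}\sq I_h)$ and let $B_1,\ldots,B_{k-1}$ be the classes of $K_{m_1,\ldots,m_{k-1}}$, with $|B_j|=m_j$. Every vertex of $A$ is adjacent to every vertex of $\bigcup_j B_j$, and the $B_j$ are pairwise complete.

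\emph{Upper bound.} We exhibit three feasible sets $U$.
\begin{enumerate}
\item[(a)] $U=V(M_a)\cup V(K_{1,b})$. Then $G-U=I_h\vee K_{m_1,\ldots,m_{k-1}}$, which is $(k-1)$-colourable because $I_h$ can share the colour of $B_1$. Moreover $G[U]=M_a\sq K_{1,b}$ is a forest with $a+b$ edges.
\item[(b)] $U=V(M_a)\cup B_1$. Then $G-U=(K_{1,b}\sq I_h)\vee K_{m_2,\ldots,m_{k-1}}$ has chromatic number $2+(k-2)=k$. This is too many, so we instead take $U=A'\cup B_1$, where $A'$ contains one endpoint of each matching edge and every vertex of $K_{1,b}$ except its centre. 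Now $A\setminus A'$ is independent and $G-U$ is $(k-1)$-partite. The graph $G[U]$ is $G[A']\vee B_1$, and since $A'$ is independent with $|A'|=a+b$, its rank is $|A'|+m_1-1=a+b+m_1-1$. This is worse than (a), so it gives nothing new.

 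For a genuinely different bound, take $U=B_1\cup V$ with $V$ one endpoint of each matching edge plus the centre of the star. Then $A\setminus V$ is independent, so $G-U$ is $(k-1)$-colourable. Here $G[U]$ is $B_1$ completely joined to an independent set of size $a+1$ (the star centre is not adjacent to matching vertices), which is connected of order $a+1+m_1$, giving rank $a+m_1$.
\item[(c)] $U=B_1\cup B_2$. Then $G-U=A\vee K_{m_3,\ldots}$ has $\chi=2+(k-3)=k-1$. Also $G[U]=K_{m_1,m_2}$ is connected, with rank $m_1+m_2-1$.
\end{enumerate}

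\emph{Lower bound.} Let $U$ be feasible. The graph $G-U$ contains the join of $G[A\setminus U]$ with the classes $B_j\not\subseteq U$, so its chromatic number is at least $\chi(G[A\setminus U])$ plus the number of classes not entirely inside $U$. We split into cases according to how many classes lie in $U$.
\begin{enumerate}
\item[(1)] No class is contained in $U$. Then $A\setminus U$ must be empty (it contributes at least one colour), so $A\subseteq U$. Since $U$ also meets every $B_j$, the graph $G[U]$ is connected of order at least $2a+b+1+h+(k-1)$, with rank at least $2a+b+h+k-1\ge a+b$.
\item[(2)] Exactly one class, say $B_j$, is contained in $U$. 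Then $A\setminus U$ must be independent, so $U$ contains a vertex cover of $M_a\sq K_{1,b}$, that is, at least $a+1$ vertices of $A$. The graph $G[U]$ contains $B_j$ joined to these vertices, which is connected of order at least $m_j+a+1$. Its rank is therefore at least $a+m_j\ge a+m_1$.
\item[(3)] At least two classes lie in $U$. Then $U\supseteq B_i\cup B_j$, and rank monotonicity (rank cannot decrease on adding vertices) gives rank at least $m_i+m_j-1\ge m_1+m_2-1$.
\end{enumerate}

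The main care is in case (1). A class may meet $U$ partially, and then a vertex of $A$ outside $U$ still adds a colour. The argument above handles this, because any nonempty $A\setminus U$ forces one more colour beyond the $k-1$ classes.

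Finally, under \eqref{h:eq:admissible} we have $m_1\ge a+1$, so $a+m_1\ge 2a+1$ and $m_1+m_2-1\ge 2a+1$. We need each of these to be at least $a+b$, i.e.\ $a\ge b-1$. This follows from \eqref{h:eq:admissible} if $\max\{(\beta+D)/(\alpha-\beta),2D/\alpha\}\ge1$, which we check from the constants: $2D/\alpha=k^{k-1}/((k-2)!(\frac{k^{k-1}}{(k-1)!}-1))>k-1\ge 1$. Hence $a>b-1$, and the minimum equals $a+b$.
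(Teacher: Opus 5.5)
Your overall strategy matches the paper's: you split the feasible sets $U$ by how many classes $B_j$ lie inside $U$, and you exhibit a witness for each of the three values. Case (2), case (3), the witnesses $B_1\cup V$ and $B_1\cup B_2$, and the final check $a\ge b$ via $2D/\alpha>k-1$ are all correct. However, two steps fail as written.

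\textbf{Witness (a) is infeasible when $h\ge1$.} Every vertex of $I_h$ is joined to every vertex of $B_1$, so $I_h$ cannot share a colour with $B_1$. In fact $G-U=K_{h,m_1,\ldots,m_{k-1}}$, which has chromatic number $k$. So, as written, you have not shown $r_k(G)\le a+b$, and this is exactly the value the minimum takes under \eqref{h:eq:admissible}. The fix is to take $U=A=V(M_a\sq K_{1,b}\sq I_h)$, as the paper does. Then $G-U=K_{m_1,\ldots,m_{k-1}}$ is $(k-1)$-colourable. Also $G[U]=M_a\sq K_{1,b}\sq I_h$ still has rank $a+b$, because the vertices of $I_h$ are isolated.

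\textbf{Case (1) uses an unjustified claim.} Knowing that no class is contained in $U$ does not mean $U$ meets every $B_j$. The bound $\rank(G[U])\ge 2a+b+h+k-1$ you derive from this is false. The repaired witness $U=A$ falls in case (1) and has rank exactly $a+b$, so your proposal becomes internally inconsistent once (a) is fixed. The conclusion you actually need still holds by a simpler argument: $A\subseteq U$ gives $G[U]\supseteq M_a\sq K_{1,b}$, a forest with $a+b$ edges, so $\rank(G[U])\ge a+b$ by monotonicity of rank.

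With these two local repairs, your proof coincides with the paper's.
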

	
	\begin{proof}
		Write $L_0=M_a\sq K_{1,b}\sq I_{h}$, and let
		$A_1,\ldots,A_{k-1}$ be the other classes.
		Let $U\subseteq V(G)$ with $\chi(G-U)\le k-1$.
		Let $z$ be the number of classes $A_i$ entirely contained in $U$.
		So
		\[
		\chi(L_0-U)+(k-1-z)=\chi(G-U)\le k-1.
		\]
		If $z=0$, then $L_0-U$ is empty. Hence $G[U]$ contains the
		forest $M_a\sq K_{1,b}$, with rank $a+b$.
		If $z=1$, then $L_0-U$ is independent. Therefore $U\cap V(L_0)$
		is a vertex cover of $L_0$ and has at least $a+1$ vertices.
		Together with the deleted large class, of size at least $m_1$,
		these vertices induce a connected subgraph of order at least
		$a+1+m_1$. Its rank is at least $a+m_1$.
		If $z\ge2$, two deleted classes induce a complete bipartite graph
		whose rank is at least $m_1+m_2-1$.
		This proves the lower bound in \eqref{h:eq:rank-exact}.

		All three values can be obtained by different sets.
		For the first, let $U=V(L_0)$.
		For the second, choose $U=A_1\cup C$, where $C$ consists of the
		star center and one endpoint from each matching edge.
		The set $C$ is independent and a vertex cover of $L_0$.
		Thus $G[U]=K_{m_1,a+1}$, of rank $m_1+a$.
		For the third, choose $U=A_1\cup A_2$.
		The remaining graph has chromatic number $2+(k-3)=k-1$,
		and $G[U]$ has rank $m_1+m_2-1$.
		This proves equality.
		
		Under \eqref{h:eq:admissible},
		$k^{k-1}/(k-1)!>1$ and 
		\begin{align*}
		\max\left\{
		\frac{\beta+D}{\alpha-\beta},
		\frac{2D}{\alpha}
		\right\}\ge\frac{2D}{\alpha}
		=\frac{(k-1)k^{k-1}/(k-1)!}{k^{k-1}/(k-1)!-1}>k-1.
		\end{align*}
		Thus  $a\ge b$.
		Since $m_i\ge a+1$,  $r_k(G)=a+b$.
	\end{proof}
	
	\begin{proof}[Proof of Theorem~\ref{h:thm:connected-main}]
		Theorem~\ref{h:thm:connected-exact} gives hypotheses (H1) and (H2)
		of Theorem~\ref{h:thm:criterion}. Proposition~\ref{h:prop:rank-exact}
		gives (H3) with
		$A=a, \rho=a+b, \tau=\rho-A-1=b-1.$
		The condition $b\ge k+1$ implies $\tau>k-1$.
		Substitution into \eqref{h:eq:general-ratio} gives
		\eqref{h:eq:connected-ratio}, with $\gamma=(b-1)/(a+b-1)$.
	\end{proof}
	\begin{proof}[Proof of Corollary~\ref{h:cor:optimized}]
		If $q\le\delta n^{k-1+\gamma}$,
		the two terms on the right of \eqref{h:eq:connected-ratio} are at most $n^{k-b}$ and
		$\delta^{a+b-1}$. Choose $\delta$ and then $n$ to make each
		contribution at most $1/(2K)$. This proves (i).
		For (ii), the second term tends to zero and the first does so
		uniformly for $q\ge1$. Substituting $q\le n^{k-1+c}$ gives
		(iii),
		where
		$
		\varepsilon=\min\{b-k,\ b-1-c(a+b-1)\}>0.
		$
	\end{proof}
	
	\section{Further extremal results}\label{h:sec:extensions}
	
	The exact theorem also applies to connected subgraphs of a graph $G$ satisfying \eqref{h:eq:admissible}.
	
	\begin{theorem}\label{h:thm:subgraphs}
		Let $G$ satisfy Theorem~\ref{h:thm:connected-exact}.
		Let $Q\subseteq G$ be a connected subgraph containing $a+1$
		pairwise vertex-disjoint copies of $K_{k+1}$.
		The subgraph $Q$ need not be spanning.
		Then $Q^{+k}$ has the same unique extremal hypergraph $\J_a(n)$
		for all sufficiently large $n$. It is  stable and not
		$k$-partite.
	\end{theorem}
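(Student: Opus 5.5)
The plan is a sandwich argument. $\J_a(n)$ is $Q^{+k}$-free, and $Q\subseteq G$, so every $Q^{+k}$-free hypergraph is also $G^{+k}$-free. Together these give $e(\J_a(n))\le\ex(n,Q^{+k})\le\ex(n,G^{+k})=e(\J_a(n))$, the last equality coming from Theorem~\ref{h:thm:connected-exact}. Hence the extremal numbers coincide.

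\textbf{Step 1: $\J_a(n)$ is $Q^{+k}$-free and $Q^{+k}$ is not $k$-partite.} I repeat the proof of Lemma~\ref{h:con:lem:jfree} for $Q$ directly, using only the $a+1$ vertex-disjoint copies of $K_{k+1}$ in $Q$. Each $K_{k+1}^{+k}$ is not $k$-partite, so it cannot embed into the $k$-partite hypergraph $J-S$. Therefore each of the $a+1$ disjoint expanded cliques must meet $S$, which is impossible when $|S|=a$. The same observation shows that $Q^{+k}$ is not $k$-partite. Connectivity of $Q$ gives that $Q^{+k}$ is connected, and every expansion is linear.

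\textbf{Step 2: uniqueness.} Let $H$ be any extremal $Q^{+k}$-free hypergraph. Then $e(H)=e(\J_a(n))=\ex(n,G^{+k})$, and $H$ is $G^{+k}$-free because $Q^{+k}\subseteq G^{+k}$. So $H$ is an extremal $G^{+k}$-free hypergraph. By the uniqueness part of Theorem~\ref{h:thm:connected-exact}, $H\cong\J_a(n)$ for all large $n$.

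\textbf{Step 3: stability.} I would apply Proposition~\ref{h:prop:stability}, which requires $\chi(Q)=k+1$. We have $\chi(Q)\ge k+1$ since $Q$ contains $K_{k+1}$, and $\chi(Q)\le\chi(G)=k+1$. If one prefers to avoid the chromatic-number hypothesis, stability also follows directly by the same sandwich. A $Q^{+k}$-free $H_n$ with $e(H_n)=\ex(n,Q^{+k})-o(n^k)$ is $G^{+k}$-free with $e(H_n)=\ex(n,G^{+k})-o(n^k)$. Stability of $G^{+k}$, given by Theorem~\ref{h:thm:connected-exact}, then places $H_n$ within $o(n^k)$ edges of $\J_a(n)$.

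The main obstacle is essentially none; the only step needing care is checking that the freeness argument uses nothing beyond the disjoint cliques, and the proof of Lemma~\ref{h:con:lem:jfree} shows exactly that. I would also note explicitly that $n$ must be large enough for Theorem~\ref{h:thm:connected-exact} to apply to $G$.
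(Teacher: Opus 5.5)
Your proposal is correct and follows essentially the same route as the paper. It shows that $\J_a(n)$ is $Q^{+k}$-free via the $a+1$ disjoint expanded cliques, uses the sandwich $e(\J_a(n))\le\ex(n,Q^{+k})\le\ex(n,G^{+k})=e(\J_a(n))$ with uniqueness inherited from Theorem~\ref{h:thm:connected-exact}, and gets stability from Proposition~\ref{h:prop:stability} using $\chi(Q)=k+1$. Your alternative stability argument, which inherits stability directly from $G^{+k}$ through the same sandwich, is also valid and avoids the chromatic-number hypothesis.
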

	\begin{proof}
		The $a+1$ disjoint cliques in $Q$ give $a+1$ disjoint expanded
		cliques in $Q^{+k}$. Each must use a vertex of the distinguished
		$a$-set in $\J_a(n)$, since its complement is $k$-partite.
		Thus $\J_a(n)$ is $Q^{+k}$-free.
		
		Since $Q\subseteq G$, we have $Q^{+k}\subseteq G^{+k}$.
		Every $Q^{+k}$-free hypergraph is therefore $G^{+k}$-free. Hence
		\[
		e(\J_a(n))\le\ex(n,Q^{+k})
		\le\ex(n,G^{+k})=e(\J_a(n)).
		\]
		Any extremal $Q^{+k}$-free hypergraph is also extremal for
		$G^{+k}$, so uniqueness follows from Theorem~\ref{h:thm:connected-exact}.
		Moreover, $k+1\le\chi(Q)\le\chi(G)=k+1$.
		Proposition~\ref{h:prop:stability} gives stability for $Q^{+k}$.
	\end{proof}
	
	Theorem~\ref{h:thm:subgraphs} allows many edge deletions, but it does not
	assert that all resulting graphs have the same supersaturation range.
	That range also depends on the forest rank studied in the last section.


\begin{thebibliography}{99}
	
	\bibitem{g:Turan}
	P.~Tur\'an,
	On an extremal problem in graph theory,
	\emph{Matematikai\'es Fizikai Lapok} \textbf{48} (1941), 436--452
	(in Hungarian).
	
	\bibitem{g:Erdos1}
	P.~Erd\H{o}s,
	Some recent results on extremal problems in graph theory. Results,
	in: \emph{Theory of Graphs}
	(International Symposium, Rome, 1966),
	Gordon and Breach, New York; Dunod, Paris, 1967, pp.~117--123.
	
	\bibitem{g:Erdos2}
	P.~Erd\H{o}s,
	On some new inequalities concerning extremal properties of graphs,
	in: \emph{Theory of Graphs}
	(Colloquium, Tihany, 1966),
	Academic Press, New York, 1968, pp.~77--81.
	
	\bibitem{g:LovaszSimonovits1975}
	L.~Lov\'asz and M.~Simonovits,
	On the number of complete subgraphs of a graph,
	in: \emph{Proceedings of the Fifth British Combinatorial Conference}
	(Aberdeen, 1975),
	Congressus Numerantium \textbf{15},
	Utilitas Mathematica, Winnipeg, 1976, pp.~431--441.
	
	\bibitem{g:LovaszSimonovits1976}
	L.~Lov\'asz and M.~Simonovits,
	On the number of complete subgraphs of a graph II,
	in: \emph{Studies in Pure Mathematics},
	Birkh\"auser, Basel, 1983, pp.~459--495.
	
	\bibitem{g:Simonovits3}
	M.~Simonovits,
	A method for solving extremal problems in graph theory,
	stability problems,
	in: \emph{Theory of Graphs}
	(Colloquium, Tihany, 1966),
	Academic Press, New York, 1968, pp.~279--319.
	
	\bibitem{g:Mubayi1}
	D.~Mubayi,
	Counting substructures I: Color critical graphs,
	\emph{Advances in Mathematics} \textbf{225} (2010), 2731--2740.
	
	
	\bibitem{h:Mubayi2013}
	D.~Mubayi,
	Counting substructures II: Hypergraphs,
	\emph{Combinatorica} \textbf{33} (2013), 591--612.
	
	
	\bibitem{h:MaYuan2025}
	J.~Ma and L.-T.~Yuan,
	Supersaturation beyond color-critical graphs,
	\emph{Combinatorica} \textbf{45} (2025), 

	\bibitem{g:ChenYuan2026}
	W.~Chen and L.-T.~Yuan,
	Strong counterexamples to a supersaturation question of Ma--Yuan,
	arXiv:2606.09518v1 (2026).
	\url{https://arxiv.org/abs/2606.09518v1}.
	
	\bibitem{h:LLLW2026}
	H.~Li, H.~Liu, X.~Liu and J.~Wang,
	Strong counterexamples to Mubayi's supersaturation conjecture
	in every uniformity,
	arXiv:2606.26735v3 (2026).
	\url{https://arxiv.org/abs/2606.26735v3}.
	
	\bibitem{h:Simonovits1974}
	M.~Simonovits,
	Extremal graph problems with symmetrical extremal graphs.
	Additional chromatic conditions,
	\emph{Discrete Mathematics} \textbf{7} (1974), 349--376.

	\bibitem{h:Gerbner2025}
	D.~Gerbner,
	On non-degenerate Tur\'an problems for expansions,
	\emph{European Journal of Combinatorics} \textbf{124} (2025),
	Article~104071.

	\bibitem{g:Ajtai}
	M.~Ajtai, J.~Koml\'os, M.~Simonovits and E.~Szemer\'edi,
	The solution of the Erd\H{o}s--S\'os conjecture for large trees,
	unpublished manuscript.
	
	\bibitem{h:MaQiu2020}
	J.~Ma and Y.~Qiu,
	Some sharp results on the generalized Tur\'an numbers,
	\emph{European Journal of Combinatorics} \textbf{84} (2020),


	
\end{thebibliography}
\end{document}